\documentclass{siamonline190516}
\usepackage{a4,latexsym,exscale,theorem,epsfig,amsmath}
\usepackage{amssymb,psfrag,epsf,amsmath,verbatim,bbm,float}
\usepackage{listings}
\usepackage{color}
\usepackage{mathbbol}
\usepackage{enumerate}
\lstset{language=matlab}
\newtheorem{remark}[theorem]{Remark}
\newtheorem{example}[theorem]{Example}

\usepackage{algpseudocode}
\usepackage{geometry}
\usepackage{accents}
\usepackage{todonotes}
\usepackage{caption}
\geometry{a4paper, left=30mm, right=30mm, top=3.5cm, bottom=4cm}
\DeclareMathOperator*{\esssup}{ess\sup}
\DeclareMathOperator*{\essinf}{ess\inf}

\floatstyle{plain}
\newfloat{diagram}{hbt}{lop}[section]
\floatname{diagram}{Diagram}
\makeatletter
\let\c@table\c@table
\let\c@diagram\c@table
\makeatother
\begin{document}
\makeatletter
\let\c@algorithm\c@theorem
\makeatother
\newcommand {\eps} {\varepsilon}
\newcommand {\Z} {\mathbbm{Z}}
\newcommand {\R} {\mathbbm{R}}
\newcommand {\N} {\mathbbm{N}}
\newcommand {\Q} {\mathbbm{Q}}
\newcommand {\C} {\mathbbm{C}}
\newcommand {\I} {\mathbbm{I}}
\newcommand {\PP} {\mathbbm{P}}
\newcommand {\ang} {\measuredangle}
\newcommand {\e} {{\rm{e}}}
\newcommand {\rank} {{\rm{rank}}}
\newcommand {\Span} {{\mathrm{span}}}
\newcommand {\card} {{\rm{card}}}
\newcommand {\ED} {\mathrm{ED}}
\newcommand {\cA} {\mathcal{A}}
\newcommand {\cO} {\mathcal{O}}
\newcommand {\cF} {\mathcal{F}}
\newcommand {\cN} {\mathcal{N}}
\newcommand {\cV} {\mathcal{V}}
\newcommand {\cG} {\mathcal{G}}
\newcommand {\cB} {\mathcal{B}}
\newcommand {\cD} {\mathcal{D}}
\newcommand {\cP} {\mathcal{P}}
\newcommand {\cQ} {\mathcal{Q}}
\newcommand {\cW} {\mathcal{W}}
\newcommand {\cT} {\mathcal{T}}
\newcommand {\cI} {\mathcal{I}}
\newcommand {\Sn}[1] {\mathcal{S}^{#1}}
\newcommand {\range} {\mathcal{R}}
\newcommand {\kernel} {\mathcal{N}}
\newcommand{\one}{\mathbb{1}}
\renewcommand{\thefootnote}{\fnsymbol{footnote}}
\newcommand{\rle}{\rotatebox[origin=c]{-90}{$\le$}}
\newcommand{\rl}{\rotatebox[origin=c]{-90}{$<$}}
\newcommand{\rg}{\rotatebox[origin=c]{-90}{$=$}}

\newcommand*\flag[1]{{\textbf{*** #1 ***}}}


\title{\bf Angular values of nonautonomous and random linear dynamical systems:
     Part I -- Fundamentals}
\author{Wolf-J\"urgen Beyn\footnotemark[1]\qquad
  Gary Froyland \footnotemark[2] \qquad
  Thorsten H\"uls\footnotemark[1]
}
\footnotetext[1]{Department of Mathematics, Bielefeld University,
  33501 Bielefeld, Germany \\
\texttt{beyn@math.uni-bielefeld.de}, \texttt{huels@math.uni-bielefeld.de}}
\footnotetext[2]{School of Mathematics and Statistics, University of
New South Wales, Sydney NSW 2052, Australia\\ \texttt{g.froyland@unsw.edu.au}}

\maketitle


\begin{abstract}
  We introduce the notion of angular values
  for deterministic linear difference equations and random linear cocycles.
  We measure the principal angles between subspaces of
  fixed dimension as they evolve under nonautonomous
   or random linear dynamics.
  The focus is on long-term averages of these principal angles, which we
  call angular values:  we demonstrate
   relationships between different types of angular values
  and prove their existence for random
  dynamical systems.
  For one-dimensional subspaces in two-dimensional systems our
  angular values agree with the classical theory of rotation numbers for
  orientation-preserving circle homeomorphisms if the matrix has
  positive determinant and does not rotate vectors by more than $\frac{\pi}{2}$.
  Because our notion of angular values ignores orientation by looking at subspaces rather than
  vectors, our results apply to dynamical systems of any dimension and to
  subspaces of arbitrary dimension.  The second part of the paper delves deeper into the theory of the autonomous case.
   We explore the relation to (generalized) eigenspaces, provide
  some explicit formulas for angular values, and set up a general numerical
  algorithm for computing angular values via Schur decompositions.
\end{abstract}

\begin{keywords}
Nonautonomous dynamical systems,
random dynamical systems, angular value,
ergodic average,
principal angles of subspaces,
numerical algorithm.
\end{keywords}

\begin{AMS}
37C05, 37E45, 37A05, 65Q10, 15A18.
\end{AMS}

\section{Introduction}
\label{sec0}
\renewcommand{\thefootnote}{\arabic{footnote}}
\setcounter{footnote}{0}
In this paper we propose and analyze suitable notions of angular values
 for linear nonautonomous discrete-time dynamical systems.
The systems are of the form
\begin{equation}\label{diffeq}
  u_{n+1} = A_n u_n,\quad u_0 \in \R^d, \quad  n \in \N_0
\end{equation}
with  $A_n \in \mathrm{GL}(\R^{d})$, i.e.\ with real invertible $d\times d$ matrices
$A_n,n \in \N_0$.
  Our goal is to study  the average rotation of
  $s$-dimensional  subspaces $V_0 \subseteq \R^d$ for $s=1,\ldots,d$
when iterated as in  \eqref{diffeq}, i.e.\ we consider
the sequence of subspaces generated by
\begin{equation} \label{diffV}
  V_{n+1}= A_n V_n, \quad n \in \N_0,
\end{equation}
so that $V_{n+1}=V_{n+1}(V_0)$ depends on $V_0$ via $V_{n+1}=A_{n}A_{n-1}\cdots A_1A_0V_0$.
Since the matrices $A_n$ are invertible the  subspaces $V_n$ have
the same dimension $s$ for all $n \in \N_0$. Their rotation is
measured by the well-established notion of principal angles between
subspaces which originates with C. Jordan in 1876.
By $\ang(V,W)$ we denote the maximum principal angle of two subspaces $V,W$ and
we recall that $0 \le \ang(V,W)\le \frac{\pi}{2}$ holds. Some basics of the
theory of principal angles and of their numerical computation may be found in
\cite{Meyer2000}, \cite[Ch.6.4]{GvL2013}. Generalizations to complex
vector spaces and the triangle inequality appear in the papers
\cite{Ga2006}, \cite{js96}, \cite{ZK2013}.
In  Section \ref{sec1} we derive some specific results, tailored to our
needs, such as estimates of principal angles in terms of norms and an angle bound
for linear maps.
Using principal angles between successive spaces $V_{j-1}$ and $V_{j}$ generated by
\eqref{diffV} we form the $n$-step average
\begin{equation} \label{average}
  \frac{1}{n}a_{1,n}(V_0), \quad \text{where} \quad a_{1,n}(V_0)=
  \sum_{j=1}^n \ang(V_{j-1},V_{j}), \quad n \ge 1
\end{equation}
and two types of limiting values
\begin{equation} \label{angularvalues}
  \begin{aligned}
    \bar{\theta}_s &= \limsup_{n \to \infty} \sup_{V_0\in  \mathcal{G}(s,d)}
    \frac{1}{n}a_{1,n}(V_0),
    \qquad \hat{\theta}_s =  \sup_{V_0\in  \mathcal{G}(s,d)} \limsup_{n \to \infty}
    \frac{1}{n}a_{1,n}(V_0),
  \end{aligned}
\end{equation}
 where $\mathcal{G}(s,d)$ denotes the Grassmann manifold of
 $s$-dimensional subspaces of $\R^d$.
 We call
 $\bar{\theta}_s$  the $s$-inner and $\hat{\theta}_s$ the $s$-outer
 angular value of the system \eqref{diffeq}.
  In sections \ref{sec2}-\ref{sec3} we
   will discuss systems for which the $\limsup$s in \eqref{angularvalues}
   are actually limits.
 More variations of these notions will
be defined in Section \ref{sec2.1}, and
some key examples will be presented in  Section \ref{sec3.2}  which
show that all types of angular values differ in general.

As a physical motivation of angular values
  consider some object, such as a small massless rod or a sheet,
  carried materially by a time-varying fluid flow,
   and assume that data about its position and orientation
  are available at discrete time instances. The task then is
   to measure the  maximum average rotation
  of the object. In mathematical terms we think of a continuous time
  dynamical system determining its trajectory,
  and we assume that the system \eqref{diffeq}  describes its linearization
  about the trajectory  when sampled at discrete times. Then the first and second outer
  angular values measure the maximum average angle of rotation
  exerted by the flow on a line
  ($s=1$) or on a plane ($s=2$).
  Rotations of subspaces $s\ge 3$ may be relevant in higher-dimensional phase spaces.
In view of such  applications
it is natural to extend the quantities \eqref{angularvalues}
to continuous-time systems. A short discussion of such an extension is
given in the outlook of this article.

Perhaps the simplest example is a $2\times 2$ orthogonal matrix, where $d=2$,
$s=1$ and
\begin{equation} \label{rotation}
  A_n\equiv A=T_{\varphi}=\begin{pmatrix}
  \cos\varphi & -\sin\varphi \\
  \sin\varphi & \cos\varphi \end{pmatrix}, \quad 0 \le \varphi \le \frac{\pi}{2}.
\end{equation}
All summands in \eqref{average} are $\varphi$ and $a_{1,n}(V_0)=n
\varphi$ for all one-dimensional $V_0\subset \R^2$. Hence we find
  $\bar{\theta}_1= \hat{\theta}_1= \varphi$ in this case.

 A first motivating example  is the following randomized version of \eqref{rotation}.
 Let $(\Omega,\PP)$ be a probability space, $\tau:\Omega\to\Omega$ be
 an ergodic transformation preserving $\PP$ and $\varphi:\Omega \to
 [0,\frac{\pi}{2}]$ be
 a random variable.
Setting $A(\omega)=T_{\varphi(\omega)}$
and $A_n=A(\tau^n\omega_0)$ for some $\omega_0\in\Omega$ we see that
$a_{1,n}(V_0)=\sum_{j=0}^{n-1}\varphi(\tau^j\omega_0)$ for every
$V_0$.
By Birkhoff's ergodic theorem, for $\PP$-almost every $\omega_0$, one
has $\lim_{n\to\infty}\frac{1}{n} a_{1,n}(V_0)=\int \varphi(\omega)\
\mathrm{d}\PP(\omega)$.
The above general formula holds for driving systems $\tau$
modeling any ergodic stationary deterministic or stochastic process.
 In Section \ref{sec3} we generalize the various notions  of angular values
 to the general setting of random dynamical systems (cf.\ \cite{A1998}).
 We establish their existence via ergodic theorems and
 prove inequalities between the various types; see Theorem \ref{thmRDS}.

A second motivating example abandons orthogonality
and changes \eqref{rotation} by a skewing factor $0<\rho \le 1$  to
\begin{equation*} \label{skew2dim}
 A_n\equiv  A(\rho,\varphi)= \begin{pmatrix} \cos(\varphi) & - \rho^{-1} \sin(\varphi)\\
    \rho \sin(\varphi) & \cos(\varphi) \end{pmatrix},  \quad
   0\le \varphi \le \frac \pi 2.
\end{equation*}

This matrix turns out to be a kind of normal form with regard to measuring
angles between a one-dimensional subspace and its image (see
Proposition \ref{lem2.3}). The angular values $\hat{\theta}_1$ and
$\bar{\theta}_1$ agree in this case,
but they differ from $\varphi$ in general and depend critically on the value
of $\rho$ (see Proposition \ref{lem2.3} and Theorem \ref{prop5.1}).

There is a weak analogy of first angular values to Lyapunov exponents
  which measure the maximum average exponential growth of a linear
  nonautonomous system \eqref{diffeq}; see e.g.\ \cite[Ch.3.2]{A1998},
  \cite{barreira2017}, \cite[Suppl.2]{KH95}).
  For the latter purpose it is enough to compare the norm
  of the last iterate with the first one and average the logarithm.
  However, in the angular direction one expects only linear growth
  which requires one to calculate an arithmetic average over every single time step.

For certain systems, the above definition of angular values is related to existing
concepts of measuring rotations in dynamical systems, which we now discuss.
We first mention the classical theory of rotation numbers for
orientation-preserving homeomorphisms of the circle, cf.\
\cite{dMvS1993}, \cite[Ch.11]{KH95}, \cite{N1971}. If the system
  \eqref{diffeq} is two-dimensional and autonomous (i.e.\ $A_n\equiv A
  \in \mathrm{GL}(\R^2)$), then it generates a homeomorphism of the
  unit circle, which is orientation-preserving for $\det(A)>0$. If, in
  addition, no vector rotates by an angle
  greater than $\frac{\pi}{2}$, then the rotation number agrees (up to a factor
  of $2 \pi$) with the
  first angular value; see Section \ref{sec5a}, Remark \ref{rotnumber}
  and Proposition \ref{lem2.3} for more details.
However, such a comparison is no longer possible  for a reflection or
for matrices which generate rotations of vectors with angles larger than $\frac{\pi}{2}$.
By contrast to rotation numbers, our definition \eqref{angularvalues}
avoids assuming or
specifying any orientation, even when one observes the motion of
one-dimensional subspaces (rather than vectors) in a two-dimensional space.
Including orientation typically leads to complications in discrete-time systems.
For example, for rotations that are close to reflections one needs
extra analytic information from the system (such as $\det(A_n)$),
which we consider as inaccessible to observation.
When rotations of vectors larger than $\frac{\pi}{2}$ occur, our definition
takes the smaller of both possible angles; Figure \ref{hen0} illustrates
this for a sequence of subspaces.
Note that angles between successive subspaces are indicated by black
arcs with time progressing outward.

\begin{figure}[hbt]
 \begin{center}
   \includegraphics[width=0.35\textwidth]{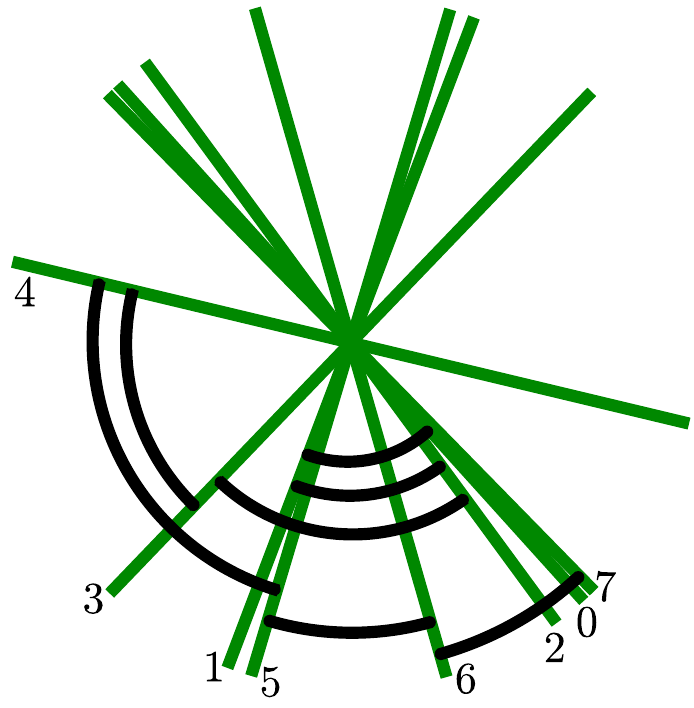}
 \end{center}
 \caption{\label{hen0}Angles between successive subspaces in the H\'{e}non system (\ref{var2}).}
\end{figure}

The theory of rotation numbers for homeomorphisms of the circle
  has been generalized to so-called rotation sets of toral automorphisms
  in \cite{MZ1989}, and a numerical approach appears in \cite{PPZ2017}.
  However,
  there seems to be  no connection to the definition \eqref{angularvalues} in higher
  dimensions.

  Another far-reaching extension of rotation numbers to nonautonomous
  continuous time systems of arbitrary dimension
  has been proposed and investigated in \cite{AS1989}, \cite[Ch.6.5]{A1998}.
   The average rotation of vectors is measured within all two-dimensional subspaces
   (more generally within tangent planes of a manifold) mapped by the system.
   Orientation is taken into
   account where counterclockwise refers to positive values.
   In essence one studies the flow induced  by the
   given system on the Grassmannian $\mathcal{G}(2,d)$. The concept
   generalizes  to nonlinear random dynamical systems and even leads to
   a multiplicative ergodic theorem, see \cite[Th.6.5.14]{A1998}.
   However, the conceptual difference to angular values remains the same as for the classical
   rotation numbers.

       Yet another concept of rotation numbers has been developed for continuous
   time linear Hamiltonian systems of arbitrary dimension; see \cite{j87},
   \cite{jn94}, with a route from the theory to numerical results provided
   in \cite{ef13}. The notion is based on a suitable generalization of
   the $\mathrm{arg}$-function from a scalar complex system to the even dimensional real
   case. Then the rotation number appears as the limit of the time
   average of the $\mathrm{arg}$-function when applied to a symplectic
   fundamental matrix. The setting is similar to the  random dynamical systems
   mentioned above. The resulting rotation number has interesting relations to the
   dichotomy spectrum of a parametrically perturbed Hamiltonian system;
   see \cite[Theorem 4-6]{ef13}. This notion differs from
   the first angular values of this paper  since time is continuous and orientation is
   taken into account by the choice of the $\mathrm{arg}$-function.

    Let us also mention the notion of antieigenvalues and antieigenvectors
   developed in \cite{Gu2012}. They are determined by the maximum angle
   $\ang(v,Av)$ by which a given matrix $A$ can turn a vector $v \in \R^d$.
   This corresponds to maximizing the first summand in \eqref{average},
   but ergodic averages seem not to have been considered in this theory.

   In the following we
    summarize some further results of this paper. In Section
   \ref{sec2.1} we collect elementary properties
   of angular values, such as inequalities among them and invariance under
   special kinematic similarities, see \cite{GKK96} for this notion.
Section \ref{sec5} presents an in-depth study of the autonomous case $A_n \equiv A$.
   The main theoretical result is Theorem \ref{cor2.4} which
   reduces the computation of angular values to the case of a block-diagonal matrix.
   Theorem \ref{cor2.4}
     builds on a spectral decomposition (Blocking Lemma \ref{lemblock}), on
     a special treatment
   of multiple real eigenvalues (Proposition \ref{prop2.5}), and on a detailed
   analysis of the two-dimensional case (Proposition \ref{lem2.3}).
   In the two-dimensional case we show that all types of first angular
     values coincide  and provide a rather explicit
     formula (Proposition \ref{lem2.3}, Theorem \ref{prop5.1}). While real
     eigenvalues of the matrix lead to a vanishing angular value,
     complex conjugate ones lead to interesting
     resonances depending on a skewness parameter; see Figures \ref{reso},
     \ref{reso3d}. In the latter case we use ergodic theory to derive an integral expression
     for the first angular value when rotation occurs with irrational
     multiples of $\pi$, and we reduce the computation to maximizing a
     finite sum in the rational
     case.
      In Section \ref{sec6} we present a numerical
   algorithm for the autonomous case  based on eigenvalue computations
   and one-dimensional optimization which avoids failure caused
   by  simple forward iteration.
   We apply the algorithm to study various systems up to
     dimension $10^4$, and we
    confirm numerically the rather subtle behavior in the
    two-dimensional complex conjugate case.


\section{Angles of subspaces}
\label{sec1}
In this section we collect some useful results about principal angles between
subspaces.
In the following, let
$\|v \| = \sqrt{v^{\top}v}$ denote the Euclidean norm for $v \in \R^d$
and let $\range(A)$, $\kernel(A)$ and $\sigma(A)$ denote the range, the kernel
and the spectrum of a matrix $A$.
Recall the definition of principal angles and principal
 vectors of two subspaces $V,W$ of $\R^d$ of equal dimension from
\cite[Ch.6.4.3]{GvL2013}.
\begin{definition} \label{def1}
Let $V,W$ be subspaces of $\R^d$ of dimension $s$.
Then the principal angles $0\le \phi_1 \le \ldots \le \phi_{s}\le
\frac{\pi}{2}$ and associated principal vectors $v_j\in V$, $w_j \in W$
 are defined recursively for $j=1,\ldots,s$ by
\begin{equation} \label{eq1.0}
  \cos(\phi_j)= \max_{\substack{v\in V,
                         \|v\|=1 \\ v^{\top}v_{\ell}  =0,
      \ell=1,\ldots,j-1}}\
\max_{\substack{w\in W, \|w\|=1 \\ w^{\top}
                    w_{\ell} =0,\ell=1,\ldots,j-1}} v^{\top}w
=v_j^{\top} w_j.
\end{equation}
\end{definition}
The right-hand side of \eqref{eq1.0} lies in $[0,1]$,
so that $\phi_j \in [0, \frac{\pi}{2}]$ is uniquely defined by \eqref{eq1.0}.
While principal angles are unique,  principal vectors are not, in
general. Let us note that principal angles and principal vectors are
also defined for subspaces of different dimension
  (see \cite[Ch.6.4.3]{GvL2013}), but this feature will not be used due
  to our assumption of invertibility.
We further write $\phi_j=\phi_j(V,W)$ to indicate the dependence on the
subspaces, and for the largest angle we introduce the notation
\begin{equation*} \label{eq1.a}
\phi_{s}(V,W)= \ang(V,W).
\end{equation*}
If the subspaces $V$ and $W$ are one-dimensional we may write
\begin{align*}
  \ang(v,w) = \ang(\mathrm{span}(v),\mathrm{span}(w)), \quad v,w \in \R^d,
  v,w \neq 0.
\end{align*}
Let us also note that the usage of the angle between subspaces varies
in the literature. For example, in \cite[(3.3.13)]{Ad1995}, \cite[p.216]{A1998} this notion is
  used for $\sin(\phi_1)$ where $\phi_1$ is the smallest angle. Then
  \eqref{eq1.0} turns into a min-min characterization, and the angle
  becomes zero if both subspaces share a common direction.

Principal values and vectors
can be computed from a singular value decomposition (SVD) as follows.
\begin{proposition}(\cite[Algorithm 6.4.3]{GvL2013}) \label{prop1}
Let $P, Q \in \R^{d,s}$ be two matrices with orthonormal columns and consider the SVD
\begin{equation} \label{eq1.b}
P^{\top}Q = Y \Sigma Z^{\top}, \quad Y,Z,
\Sigma=\mathrm{diag}(\sigma_1,\ldots,\sigma_{s}) \in \R^{s,s}, \; Y^{\top}Y=I_s=Z^{\top}Z.
\end{equation}
Then the principal angles $\phi_j$ of $V=\range(P)$ and
$W= \range(Q)$ satisfy
\begin{equation*} \label{eq1.c}
\sigma_j = \cos(\phi_j), j=1,\ldots, s,
\end{equation*}
and principal vectors are given by
\begin{equation} \label{eq1.d}
PY = \begin{pmatrix}  v_1 & \cdots & v_{s} \end{pmatrix}, \quad
QZ = \begin{pmatrix}  w_1 & \cdots & w_{s} \end{pmatrix}.
 \end{equation}

\end{proposition}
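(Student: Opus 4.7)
The plan is to reduce the constrained bilinear maximization in Definition \ref{def1} to the variational characterization of singular values, using the orthonormality of the columns of $P$ and $Q$ to pass between vectors in the subspaces and coordinate vectors in $\R^s$.

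First I would parametrize: since the columns of $P$ form an orthonormal basis of $V=\range(P)$, every unit vector $v\in V$ may be written uniquely as $v=Pa$ with $a\in\R^s$, $\|a\|=1$, and similarly $w=Qb$ with $\|b\|=1$. The inner product becomes $v^{\top}w = a^{\top}(P^{\top}Q)b = a^{\top}M b$ with $M:=P^{\top}Q$. The key observation is that the orthogonality constraints translate cleanly: if $v_{\ell}=Pa_{\ell}$, then $v^{\top}v_{\ell} = a^{\top}P^{\top}P a_{\ell} = a^{\top}a_{\ell}$ because $P^{\top}P=I_s$, and analogously for the $w$-side. Thus the recursive definition \eqref{eq1.0} is equivalent to computing, for $j=1,\dots,s$,
\begin{equation*}
  \cos(\phi_j)=\max_{\substack{\|a\|=1,\ a\perp a_1,\dots,a_{j-1}\\ \|b\|=1,\ b\perp b_1,\dots,b_{j-1}}} a^{\top}M b,
\end{equation*}
with $a_j,b_j$ the optimizing vectors.

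Next I would substitute the SVD \eqref{eq1.b} $M=Y\Sigma Z^{\top}$ (with $\sigma_1\ge\cdots\ge\sigma_s$) and change variables $\tilde a:=Y^{\top}a$, $\tilde b:=Z^{\top}b$. Since $Y,Z$ are orthogonal, the unit-norm constraints are preserved, and the objective becomes the diagonal form $\tilde a^{\top}\Sigma\tilde b=\sum_{i=1}^{s}\sigma_i\tilde a_i\tilde b_i$. I would prove by induction on $j$ that $a_j=Ye_j$ and $b_j=Ze_j$ are optimal. The base case $j=1$ is immediate from Cauchy--Schwarz: $\sum_i\sigma_i\tilde a_i\tilde b_i\le\sigma_1\|\tilde a\|\|\tilde b\|=\sigma_1$, with equality at $\tilde a=\tilde b=e_1$. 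For the inductive step, the orthogonality constraints $a\perp Ye_{\ell}$ for $\ell<j$ are equivalent to $\tilde a_{\ell}=0$, and similarly $\tilde b_{\ell}=0$, so the admissible sums run only over $i\ge j$, and the same Cauchy--Schwarz bound gives $\sigma_j$ with equality at $\tilde a=\tilde b=e_j$. This yields $v_j=Pa_j=PYe_j$ and $w_j=Qb_j=QZe_j$, which are exactly the $j$-th columns of $PY$ and $QZ$ in \eqref{eq1.d}, and $\sigma_j=\cos(\phi_j)$.

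Two small points deserve attention but are not serious obstacles. First, one must check $\sigma_j\in[0,1]$ so that $\phi_j=\arccos(\sigma_j)\in[0,\pi/2]$; this follows from $\|M\|_2\le\|P\|_2\|Q\|_2=1$ for matrices with orthonormal columns, and from the nonnegativity of singular values. Second, the SVD convention $\sigma_1\ge\cdots\ge\sigma_s$ matches the convention $\phi_1\le\cdots\le\phi_s$ in Definition \ref{def1}. The main (mild) obstacle is the induction step, specifically verifying that the orthogonality constraints in the original subspaces $V,W$ precisely correspond to vanishing of the first $j-1$ coordinates of $\tilde a$ and $\tilde b$; this is where the orthonormality of $P$, $Q$ and of $Y$, $Z$ is used twice. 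Once that correspondence is clean, the bilinear maximization collapses to a diagonal problem and the result falls out.
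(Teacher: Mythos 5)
Your proof is correct. Note that the paper itself does not prove this proposition at all: it is quoted directly from \cite[Algorithm 6.4.3]{GvL2013} (the classical Bj\"orck--Golub result), so there is no internal proof to compare against. Your argument is essentially the standard one underlying that reference: parametrize $v=Pa$, $w=Qb$ using $P^{\top}P=Q^{\top}Q=I_s$, observe that the orthogonality constraints on principal vectors become Euclidean orthogonality of the coordinate vectors, rotate by $Y$ and $Z$ to diagonalize $M=P^{\top}Q$, and then solve the resulting diagonal bilinear maximization step by step, with $\sigma_j\in[0,1]$ following from $\|P^{\top}Q\|_2\le 1$. Incidentally, this is the same parametrization technique the authors use in their supplementary proof of Proposition \ref{Lemma2}, so your route is fully in line with the paper's toolkit.

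One small point you may wish to make explicit: Definition \ref{def1} defines $\phi_j$ through previously chosen maximizers, which are not unique, so strictly speaking your induction shows that the columns of $PY$ and $QZ$ form \emph{one} admissible sequence of principal vectors along which the successive maxima are $\sigma_1\ge\cdots\ge\sigma_s$. That is exactly what the proposition asserts (the angles are unique, the vectors need not be), but a sentence acknowledging that the value of the $j$-th maximum does not depend on the earlier choice of maximizers would make the identification $\cos(\phi_j)=\sigma_j$ airtight. This is a presentational refinement, not a gap.
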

Since the singular values of $P^{\top}Q$ and $Q^{\top}P$ agree,
  principal angles are symmetric with respect to $V$ and $W$. In
  particular, the maximum angle satisfies
\begin{equation*} \label{eq1.z}
  \ang(V,W) = \ang(W,V).
\end{equation*}
In Definition \ref{def1} the angles between two subspaces of equal
dimension are defined recursively.
For the computation of the $j$-th principal angle, the
  max-max characterization \eqref{eq1.0} requires knowledge of the
  principal vectors from index $1$ to $j-1$. In the following
  proposition we state a complementary min-max characterization.
  It begins with $\phi_s$ and computes $\phi_j$ via the known
  principal vectors for indices $s$ to $j+1$.
The result is motivated by the  Hausdorff semi-distance between unit balls
and proves to
be  better suited for the key estimates below.
The proof will be given in the Supplementary materials \ref{app1}.
  \begin{proposition}\label{Lemma2}
  Let $V, W\subseteq \R^d$ be two $s$-dimensional subspaces.
  Then the principal angles and principal vectors satisfy for $j=s,\ldots,1$
  \begin{equation} \label{A2}
    \cos(\phi_j)=
    \min_{\substack{v\in V, \|v\|=1
            \\v^{\top}v_{\ell} =0,\ell=j+1,\ldots,s }}\
\max_{\substack{w\in W, \|w\|=1 \\w^{\top} w_{\ell} =0,\ell=j+1,\ldots,s}} v^{\top}w
=v_j^{\top} w_j.
  \end{equation}
  In particular, the following relation holds
  \begin{equation}\label{A1}
    \ang(V,W) = \phi_s(V,W)=
    \max_{\substack{v\in V \\ v\neq 0} }\min_{\substack{w\in W \\ w \neq 0}} \ang(v,w)
  = \arccos\big(\min_{\substack{v\in V\\\|v\|=1}} \max_{\substack{
      w\in W\\\|w\|=1}} v^{\top} w\big).
  \end{equation}
\end{proposition}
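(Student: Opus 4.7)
The plan is to work in an orthonormal basis of principal vectors supplied by Proposition \ref{prop1}. By that proposition we may choose matrices $P,Q\in\R^{d,s}$ with orthonormal columns whose ranges are $V$ and $W$, and after postmultiplication by the orthogonal factors $Y,Z$ in the SVD of $P^{\top}Q$, the columns of $PY$ and $QZ$ furnish orthonormal bases $v_1,\ldots,v_s$ of $V$ and $w_1,\ldots,w_s$ of $W$ satisfying $v_i^{\top}w_k=\cos(\phi_i)\,\delta_{ik}$. Once this orthogonality structure is in place, the proof reduces to a finite-dimensional calculation, and I will argue by downward induction on $j$, treating $j=s$ (with no constraints) and the inductive step uniformly.

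The core step is as follows. Fix $j\in\{1,\ldots,s\}$. Since $\{v_1,\ldots,v_s\}$ and $\{w_1,\ldots,w_s\}$ are orthonormal bases of $V$ and $W$ respectively, the constraints $v^{\top}v_\ell=0$ and $w^{\top}w_\ell=0$ for $\ell=j+1,\ldots,s$ force
\begin{equation*}
v=\sum_{i=1}^{j}\alpha_i v_i,\qquad w=\sum_{i=1}^{j}\beta_i w_i,\qquad \sum_{i=1}^{j}\alpha_i^{2}=\sum_{i=1}^{j}\beta_i^{2}=1.
\end{equation*}
Using $v_i^{\top}w_k=\cos(\phi_i)\delta_{ik}$ one computes $v^{\top}w=\sum_{i=1}^{j}\alpha_i\beta_i\cos(\phi_i)$. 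The inner maximum over $\beta$ is by Cauchy--Schwarz equal to $\bigl(\sum_{i=1}^{j}\alpha_i^{2}\cos^{2}(\phi_i)\bigr)^{1/2}$, attained at $\beta_i=\alpha_i\cos(\phi_i)/(\sum_k \alpha_k^2\cos^2(\phi_k))^{1/2}$. Since $\phi_1\le\cdots\le\phi_j$, the weights $\cos^{2}(\phi_i)$ are nonincreasing in $i$, so the outer minimum over $\alpha$ on the simplex $\sum\alpha_i^{2}=1$ is attained by concentrating all mass on the smallest weight, i.e.\ $\alpha_j=1$, $\alpha_i=0$ for $i<j$. This minimum equals $\cos(\phi_j)$ and is achieved at $v=v_j$, $w=w_j$, proving \eqref{A2}.

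For the identity \eqref{A1} I specialize to $j=s$, where the orthogonality constraints are vacuous, obtaining
\begin{equation*}
\cos(\phi_s)=\min_{v\in V,\,\|v\|=1}\ \max_{w\in W,\,\|w\|=1}\ v^{\top}w .
\end{equation*}
To reinsert the geometric $\ang(\cdot,\cdot)$, I use that $W$ is a subspace, so replacing $w$ by $-w$ leaves the admissible set invariant; hence the inner maximum equals $\max_{w\in W,\|w\|=1}|v^{\top}w|$, which is precisely $\cos(\min_{w\neq 0}\ang(v,w))$. Monotonicity of $\arccos$ then converts $\min_v\max_w$ on cosines into $\max_v\min_w$ on angles, yielding \eqref{A1}.

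The only subtlety worth flagging is the downward-induction aspect: the principal vectors $v_\ell,w_\ell$ for $\ell>j$ required by the constraints must be the \emph{same} ones that realize $\phi_\ell$; this is built into the SVD construction (the basis is chosen once and for all), and non-uniqueness of principal vectors in the presence of repeated singular values is harmless because any consistent SVD choice produces orthonormal bases with the biorthogonality relation $v_i^{\top}w_k=\cos(\phi_i)\delta_{ik}$. Apart from this bookkeeping, the argument is a short Cauchy--Schwarz plus a monotone reweighting on the sphere.
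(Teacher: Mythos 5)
Your proposal is correct and follows essentially the same route as the paper's proof: both rest on the SVD construction of Proposition \ref{prop1}, reduce the constrained min--max to $\min_{\|\alpha\|=1}\bigl(\sum_{i\le j}\alpha_i^2\cos^2(\phi_i)\bigr)^{1/2}$ (the paper writes this as $\min_{\|a^I\|=1}\|\Sigma^I a^I\|$ in partitioned matrix coordinates, your Cauchy--Schwarz step being the paper's identity $\max_{\|y\|=1}v^{\top}y=\|v\|$), and conclude by monotonicity of the weights and of $\arccos$. The only difference is notational -- you work directly in the biorthogonal principal-vector bases rather than with the partitioned factors $Z^{I},\Sigma^{I}$ -- so no further comparison is needed.
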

\begin{remark}
  A related variational characterization appears in
    \cite[Theorem 3]{RW2003}
  \begin{equation*}
    \cos(\phi_j) = \min_{\substack{U\subseteq V \\\dim U = j-1}}
    \max_{\substack{x\in U^{\perp}\cap V, \|x\|=1 \\ y \in W,
        \|y\|=1}}
  |\langle x, y \rangle |.
  \end{equation*}
  If $j=s$ then $\dim U = s-1$ and $x \in U^{\perp}\cap V$ runs
  through $V$ with $\|x\|=1$. Therefore, the formula implies
  \eqref{A2} in the case $j=s$, but
 for  $j <s$ the formulas differ.
\end{remark}

Next we  recall some well-known properties of the Grassmannian,
\begin{equation*} \label{eq1.4}
  \mathcal{G}(s,d) = \{ V \subseteq \R^d \; \text{is a subspace of dimension}
  \; s \},
\end{equation*}
 which may be found in \cite[Ch.6.4.3]{GvL2013}, \cite{js96},
  for example.

\begin{proposition} \label{prop2g}
  The Grassmannian $\mathcal{G}(s,d)$ is a compact smooth manifold of
  dimension $s(d-s)$ and a metric space with respect to
  \begin{equation*}
    d(V,W) = \| P_V - P_W \|,
  \end{equation*}
  where $P_V,P_W$ are the orthogonal projections onto $V$ and $W$, respectively,
  and the formula
\begin{align*}
  d(V,W) = \sin (\ang(V,W)), \quad V,W \in \mathcal{G}(s,d)
\end{align*}
holds. Furthermore, $\ang(V,W)$ defines an equivalent metric on
$\mathcal{G}(s,d)$ satisfying $$\frac{2}{\pi} \ang(V,W) \le d(V,W)
  \le \ang(V,W).$$
\end{proposition}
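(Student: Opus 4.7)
The plan is to address the three substantive assertions---manifold structure, the formula $d(V,W)=\sin\ang(V,W)$, and the equivalence bounds---in that order, with the metric axioms for $d$ falling out immediately. First, identify $\mathcal{G}(s,d)$ with the homogeneous space $O(d)/(O(s)\times O(d-s))$ via the action of $O(d)$ on orthonormal frames of $s$-planes: compactness is inherited from $O(d)$, and the dimension is $\binom{d}{2}-\binom{s}{2}-\binom{d-s}{2}=s(d-s)$. These facts I would cite, e.g.\ from \cite[Ch.~6.4.3]{GvL2013}. That $d(V,W)=\|P_V-P_W\|$ is a metric then follows from symmetry, the fact that $\mathrm{range}(P_V)=V$ recovers $V$ from $P_V$, and the operator-norm triangle inequality.

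The crux is the identity $d(V,W)=\sin\ang(V,W)$. Using Proposition \ref{prop1}, I would choose principal vector bases $v_1,\ldots,v_s\in V$ and $w_1,\ldots,w_s\in W$ satisfying $v_j^\top w_k=\cos(\phi_j)\delta_{jk}$. For each index $j$ with $\phi_j>0$, set
\begin{equation*}
v_j^{\perp}:=\tfrac{1}{\sin\phi_j}(w_j-\cos\phi_j\,v_j),
\end{equation*}
the unit vector in $\mathrm{span}(v_j,w_j)$ orthogonal to $v_j$. A short calculation using the three orthogonality relations among the principal vectors shows that $\{v_j\}\cup\{v_j^{\perp}:\phi_j>0\}$ is an orthonormal system spanning $V+W$; on its orthogonal complement both $P_V$ and $P_W$ vanish, while on each plane $\mathrm{span}(v_j,v_j^{\perp})$ the difference $P_V-P_W$ takes the matrix form
\begin{equation*}
\begin{pmatrix}\sin^2\phi_j & -\sin\phi_j\cos\phi_j\\ -\sin\phi_j\cos\phi_j & -\sin^2\phi_j\end{pmatrix},
\end{equation*}
whose eigenvalues are $\pm\sin\phi_j$. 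Consequently $\|P_V-P_W\|=\max_j\sin\phi_j=\sin\phi_s=\sin\ang(V,W)$.

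The sandwich $\tfrac{2}{\pi}\ang(V,W)\le d(V,W)\le\ang(V,W)$ then reduces to Jordan's inequality $\tfrac{2}{\pi}\phi\le\sin\phi\le\phi$ on $[0,\pi/2]$, with the lower bound coming from concavity of $\sin$ and the upper bound from the mean value theorem. This shows $\ang$ induces the same topology as $d$; that $\ang$ itself obeys the triangle inequality (and so is a genuine metric) I would import from \cite{js96} or \cite{ZK2013}, where the triangle inequality for the maximum principal angle is established. The one real obstacle in this plan is the block-diagonalization step: verifying that the $v_j^{\perp}$ for distinct $j$ are mutually orthogonal and orthogonal to every $v_k$ requires the simultaneous use of all three relations $v_j^\top v_k=w_j^\top w_k=\delta_{jk}$ and $v_j^\top w_k=\cos(\phi_j)\delta_{jk}$; once that bookkeeping is done, the rest is routine linear algebra.
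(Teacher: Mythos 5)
Your proposal is correct, but it is worth noting that the paper does not actually prove Proposition \ref{prop2g}: it treats these facts as classical and refers to \cite[Ch.6.4.3]{GvL2013}, \cite{js96}, so your argument is a self-contained substitute rather than a parallel of an in-text proof. The substance you add is the derivation of $\|P_V-P_W\|=\sin\ang(V,W)$, and it checks out: with principal vectors from Proposition \ref{prop1} one has $P_V w_j=\cos\phi_j\,v_j$ and $P_W v_j=\cos\phi_j\,w_j$, the vectors $v_j^{\perp}$ are indeed orthonormal and orthogonal to all $v_k$ (the cross terms vanish precisely because $v_j^{\top}w_k=\cos\phi_j\,\delta_{jk}$), each plane $\Span(v_j,v_j^{\perp})$ is invariant under $P_V-P_W$ with the stated $2\times2$ block and eigenvalues $\pm\sin\phi_j$, indices with $\phi_j=0$ and the complement of $V+W$ contribute only zeros, and since $P_V-P_W$ is symmetric its spectral norm is $\max_j\sin\phi_j=\sin\ang(V,W)$. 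One point you should make explicit is that $\|\cdot\|$ must be read as the spectral (operator $2$-) norm --- the identity fails, e.g., for the Frobenius norm --- which your eigenvalue argument tacitly assumes; the paper only defines $\|\cdot\|$ for vectors, so this reading should be stated. Delegating the manifold structure of $\mathcal{G}(s,d)$ and the triangle inequality for $\ang$ to \cite{js96}, \cite{ZK2013} is consonant with how the paper itself handles these facts, and the Jordan-inequality sandwich $\tfrac{2}{\pi}\phi\le\sin\phi\le\phi$ on $[0,\tfrac{\pi}{2}]$ correctly yields the equivalence of the two metrics.
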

Some useful geometric estimates for angles of vectors and subspaces are the following:

\begin{lemma} (Angle estimates) \label{lem0}
  \begin{enumerate}[(i)]
  \item For any two vectors $v,w \in \R^d$ with $\|v\| < \|w\|$ the following holds
    \begin{equation} \label{eq1.6}
      \begin{aligned}
        \tan^2\ang(v+w,w)& \le \frac{\|v\|^2}{\|w\|^2 - \|v\|^2}, \\
        \cos^2\ang(v+w,w)& \ge  \frac{\|w\|^2-\|v\|^2} { \|w\|^2}.
      \end{aligned}
    \end{equation}
  \item
    Let $V \in \mathcal{G}(s,d)$ and $P \in \R^{d,d}$ be such that
    for some $0 \le q <1$
    \begin{equation} \label{eq1.7}
      \| (I-P)v\| \le q \| Pv \| \quad \forall \  v \in V.
    \end{equation}
    Then $\dim(V)=\dim(PV)$ and the following estimate holds
    \begin{equation} \label{eq1.8}
      \ang(V,PV) \le \frac{q}{(1-q^2)^{1/2}}.
    \end{equation}
  \end{enumerate}
\end{lemma}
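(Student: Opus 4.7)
The proof plan is as follows. For part (i), I would work directly with chosen representatives of the two one-dimensional spans. Since $\|v\|<\|w\|$ forces $v+w\neq 0$, and Cauchy--Schwarz gives $(v+w)^\top w = v^\top w + \|w\|^2 \ge \|w\|^2 - \|v\|\|w\| > 0$, the principal angle $\theta := \ang(v+w,w) \in [0,\pi/2]$ satisfies
\[
\cos^2 \theta = \frac{(v^\top w + \|w\|^2)^2}{\|v+w\|^2\,\|w\|^2}.
\]
Setting $t := v^\top w$ and clearing denominators, the desired inequality $\cos^2\theta \ge (\|w\|^2-\|v\|^2)/\|w\|^2$ reduces to
\[
(t+\|w\|^2)^2 - (\|w\|^2-\|v\|^2)(\|v\|^2+2t+\|w\|^2) \ge 0,
\]
and routine expansion collapses the left-hand side to the perfect square $(t+\|v\|^2)^2$. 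The $\tan^2$-bound is then equivalent via the identity $\tan^2\theta = 1/\cos^2\theta - 1$.

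For part (ii), the first step is injectivity of $P|_V$: if $v\in V$ and $Pv=0$, then $\|(I-P)v\|\le q\cdot 0 = 0$, so $v = Pv + (I-P)v = 0$. Hence $\dim(PV) = \dim V = s$, and in particular $Pv\neq 0$ whenever $v\in V\setminus\{0\}$. For the angle estimate I would apply part (i) pointwise with $y := Pv$ and $u := (I-P)v$, so that $v = u+y$ and $\|u\| \le q\|y\| < \|y\|$ for every such $v$. Part (i) then gives
\[
\tan^2 \ang(v,Pv) \le \frac{\|u\|^2}{\|y\|^2-\|u\|^2} \le \frac{q^2}{1-q^2}.
\]

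The final step is to lift this pointwise bound to the subspace angle via the min--max formula \eqref{A1} from Proposition~\ref{Lemma2}. Since $Pv \in PV$ for every $v \in V$,
\[
\ang(V,PV) = \max_{v\in V\setminus\{0\}}\, \min_{w\in PV\setminus\{0\}}\, \ang(v,w) \;\le\; \max_{v\in V\setminus\{0\}}\, \ang(v,Pv) \;\le\; \arctan\bigl(q/\sqrt{1-q^2}\bigr),
\]
and the elementary inequality $\arctan x \le x$ for $x\ge 0$ (equivalently $\theta\le\tan\theta$ on $[0,\pi/2)$) delivers the stated bound \eqref{eq1.8}.

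Essentially everything is bookkeeping; the only piece requiring a moment of thought is spotting the perfect-square simplification $(t+\|v\|^2)^2$ in (i). The role of the variational formula \eqref{A1} in (ii) is precisely that it replaces an otherwise awkward search over all of $PV$ by the canonical choice $w=Pv$, which is exactly what part (i) is set up to handle.
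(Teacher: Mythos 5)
Your argument is correct and follows essentially the same route as the paper: in (i) the same reduction to the perfect square $(v^\top w+\|v\|^2)^2$ and the identity $\tan^2\theta=1/\cos^2\theta-1$, and in (ii) the same injectivity observation plus the min--max characterization \eqref{A1} with the pointwise choice $w=Pv$ and the bound $\theta\le\tan\theta$. No gaps.
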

\begin{proof}
  The first inequality in \eqref{eq1.6} follows from the second via the relation
  $\tan^2\alpha = \frac{1}{\cos^2 \alpha}-1$. The second
  inequality
  in \eqref{eq1.6} can be rewritten as
  \begin{align*} \big( (v+w)^{\top} w \big)^2- \|v+w\|^2(\|w\|^2 - \|v\|^2) \ge 0 .
  \end{align*}
  A short computation shows that the left-hand side agrees with
  $\big(v^{\top}w + \|v\|^2 \big)^2$ which proves our assertion.
  The estimate \eqref{eq1.7} shows that $Pv=0,v\in V$ implies $v=0$, hence
  $\dim(V)=\dim(PV)$.
  Inequality \eqref{eq1.8} follows from \eqref{eq1.6} and the characterization \eqref{A1}
  \begin{align*}
    \ang(V,PV) & = \max_{\substack{v \in V\\ v\neq 0}} \min_{\substack{w \in PV\\
      w \neq 0}} \ang(v,w) \le
      \max_{\substack{v \in V\\ v \neq 0}}\ang(v,Pv)
      \le \max_{\substack{v \in V\\ v \neq 0}}|\tan \ang(v,Pv)|\\
      &\le \max_{v\in V, v\neq 0}
      \frac{\|(I-P)v\|}{\|Pv\|} \Big(1 -\frac{\|(I-P)v\|^2}{\|Pv\|^2}
      \Big)^{-1/2} \le \frac{q}{(1-q^2)^{1/2}}.
  \end{align*}
\
\end{proof}

\begin{remark}
  The proof shows that the inequalities in \eqref{eq1.6} are strict
  for $v \neq 0$.
\end{remark}
Lemma \ref{lem0} will be important for proving the Blocking Lemma \ref{lemblock}  in the
autonomous case. The next auxiliary result provides an angle-bound for an invertible
matrix; it will be used in Proposition \ref{prop2.5} for treating real eigenvalues in the autonomous case.

\begin{lemma} \label{lem2.6}
  Let $S \in \mathrm{GL}(\R^d)$ and $\kappa=\|S^{-1}\| \|S\|$ be its
  condition number. Then the following estimate holds
  \begin{equation} \label{anglebound}
    \ang(SV,SW) \le \pi \kappa(1+\kappa) \ang(V,W) \quad
      \forall \  V,W \in \mathcal{G}(s,d),\quad 1\le
        s\le d.
  \end{equation}
\end{lemma}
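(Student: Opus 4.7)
The plan is to reduce \eqref{anglebound} to a pointwise estimate on how $S$ distorts the angle between two vectors, and then transfer that estimate to subspaces via the min-max characterization \eqref{A1}. This route in fact yields the stronger bound $\ang(SV,SW)\le \tfrac{\pi}{2}\kappa\,\ang(V,W)$, which implies \eqref{anglebound} a fortiori.

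The heart of the argument is the pointwise estimate
\begin{equation*}
\sin\ang(Sv,Sw) \;\le\; \kappa\,\sin\ang(v,w) \qquad \text{for all } v,w\in \R^d\setminus\{0\}.
\end{equation*}
I would prove it by writing $v=(\cos\alpha)\,w/\|w\| + (\sin\alpha)\,z$ with $\|v\|=1$, $\alpha=\ang(v,w)\in[0,\tfrac{\pi}{2}]$, $z\perp w$, $\|z\|=1$, and computing $\sin^2\ang(Sv,Sw)=1-(Sv\cdot Sw)^2/(\|Sv\|^2\|Sw\|^2)$ directly. The cancellation of the $\cos^2\alpha$ and mixed terms leaves the numerator equal to $\sin^2\alpha\bigl(\|Sw\|^2\|Sz\|^2-(Sw\cdot Sz)^2\bigr)$, which Cauchy-Schwarz bounds by $\sin^2\alpha\,\|Sw\|^2\|Sz\|^2$; combined with $\|Sz\|\le\|S\|$ and $\|Sv\|\ge 1/\|S^{-1}\|$, this produces the factor $\kappa=\|S\|\|S^{-1}\|$.

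To lift to subspaces I would apply \eqref{A1} to $SV$ and $SW$, writing
\begin{equation*}
\ang(SV,SW) \;=\; \max_{v\in V\setminus\{0\}}\min_{w\in W\setminus\{0\}} \ang(Sv,Sw),
\end{equation*}
and bounding the inner minimum by $\ang(Sv,Sw_v)$, where $w_v\in W$ attains $\beta_v:=\min_w \ang(v,w)$. The pointwise estimate together with the elementary inequality $\theta\le\tfrac{\pi}{2}\sin\theta$ on $[0,\tfrac{\pi}{2}]$ then gives $\ang(Sv,Sw_v)\le\tfrac{\pi}{2}\kappa\,\beta_v$; maximizing over $v$ and applying \eqref{A1} once more in the form $\max_v\beta_v=\ang(V,W)$ finishes the proof. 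The only nontrivial calculation is the algebraic cancellation that produces the clean $\sin^2\alpha$ identity; once that is in hand, Cauchy-Schwarz, the definition of $\kappa$, and the min-max characterization make the rest essentially automatic.
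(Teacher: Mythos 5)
Your proposal is correct, and it takes a genuinely different route at the vector level from the paper's proof, though the lifting step at the end is the same. The paper handles $s=1$ by passing through the projection metric of Proposition \ref{prop2g}: it sandwiches the angle between $\tfrac{1}{2}\|vv^{\top}-ww^{\top}\|$ and $\|v-w\|$, then estimates $\|S(\|Sv\|^{-1}v-\|Sw\|^{-1}w)\|$ by splitting into a norm-ratio term and a difference term, which is what produces the factor $\kappa(1+\kappa)$. You instead prove the pointwise distortion bound $\sin\ang(Sv,Sw)\le\kappa\,\sin\ang(v,w)$ by an orthogonal decomposition $v=\cos\alpha\,\hat w+\sin\alpha\,z$ and an exact cancellation; I checked the algebra and the numerator does reduce to $\sin^2\alpha\bigl(\|S\hat w\|^2\|Sz\|^2-(S\hat w\cdot Sz)^2\bigr)$ independently of the sign of $v^{\top}w$, so the identity, Cauchy--Schwarz, $\|Sz\|\le\|S\|$ and $\|Sv\|\ge\|S^{-1}\|^{-1}$ give the claim. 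Combined with Jordan's inequality $\theta\le\tfrac{\pi}{2}\sin\theta$ on $[0,\tfrac{\pi}{2}]$ this yields $\ang(Sv,Sw)\le\tfrac{\pi}{2}\kappa\,\ang(v,w)$, which is strictly sharper than the paper's constant (linear rather than quadratic in $\kappa$), and since $\kappa\ge1$ it implies \eqref{anglebound}. The passage to subspaces is identical in spirit to the paper: both use the max--min characterization \eqref{A1}, and your use of a minimizer $w_v$ (which exists by compactness of the unit sphere of $W$) together with $S$ mapping $V,W$ bijectively onto $SV,SW$ is sound. What each approach buys: the paper's argument leans only on already-stated metric facts and avoids any coordinate computation, while yours requires one explicit algebraic identity but rewards it with a cleaner and stronger estimate.
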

\begin{proof} Let us first prove \eqref{anglebound} for $s=1$. Then we can assume
  $V=\Span(v)$, $W=\Span(w)$ with $\|v\|=\|w\|=1$ and $v^{\top}w \ge
  0$. From Proposition \ref{prop2g}
  we have
  \begin{equation} \label{angnorm}
  \begin{aligned}
    \frac{1}{\pi}\ang(v,w) & \le \frac{1}{2}d(V,W) =\frac{1}{2} \|v v^{\top}- ww^{\top}\|
    =\frac{1}{2}\|(v-w)v^{\top}+ w(v-w)^{\top}\|\\
    & \le \|v-w\|=(2(1-\cos(\ang(v,w))))^{1/2}= 2 \sin(\tfrac{1}{2}\ang(v,w))
  \le \ \ang(v,w).
  \end{aligned}
  \end{equation}
  We apply the first inequality in \eqref{angnorm} to the image spaces and obtain
  \begin{align*}
    \ang(Sv,Sw)& = \ang(\|Sv\|^{-1}Sv,\|Sw\|^{-1}Sw)
    \le \pi \|S\left( \|Sv\|^{-1}v - \|Sw\|^{-1}w \right) \| \\
    & \le \pi \|S\|\left( | \|Sv\|^{-1}-\|Sw\|^{-1}| + \|Sw\|^{-1}\|v-w\|\right)\\
    & \le \pi \|S\| \|Sw\|^{-1} \left( \|Sv\|^{-1} \|S(w- v)\| + \|v-w\|\right).
  \end{align*}
  Now  $\|Sw\|^{-1},\|Sv\|^{-1} \le \|S^{-1}\|$ and the last inequality
  from \eqref{angnorm} lead to
  \begin{align*}
  \ang(Sv,Sw) & \le \pi \kappa (1 + \kappa) \|v-w\| \le \pi \kappa (1 + \kappa)\ang(v,w).
  \end{align*}
    For the general case $s \ge 1$ we use \eqref{anglebound} for all
  vectors $v \in V, v\neq0$, $w \in W, w\neq0$ and then apply the
  $\max$-$\min$ characterization \eqref{A1} from Proposition \ref{Lemma2}.
  \end{proof}

\section{Basic theory of angular values}
\label{sec2}
 For an invertible nonautonomous linear
 system \eqref{diffeq} we define the solution operator $\Phi_A$ by
\begin{equation*} \label{solop}
\Phi_A(n,m) =
  \begin{cases}
   A_{n-1}\cdot\ldots\cdot A_m,& \text{ for } n > m,\\
   I, & \text{ for } n = m, \\
    A_n^{-1} \cdot \ldots \cdot A_{m-1}^{-1},&
  \text{ for } n<m.
  \end{cases}
\end{equation*}
Usually we suppress the dependence on the matrix sequence $A_n, n\in \N_0$ and
simply write  $\Phi=\Phi_A$. However, in Section \ref{sec3}
we consider matrix families  generated by a linear random dynamical system
for which the dependence on the family is essential.

\subsection{Definitions and elementary properties}
\label{sec2.1}
In the following we consider various ways of defining the average
angular rotation that the system \eqref{diffeq} exerts on subspaces of
a fixed dimension. For this we use the notion of  angles of subspaces
from Section \ref{sec1}.

We reconsider a rigid rotation \eqref{rotation} as a simple motivating example,
but now we allow $0 \le \varphi \le \pi$.
For $v\in\R^2, v\neq 0$ and $j\in \N$ one obtains with Proposition \ref{prop1} that
$$
\ang(v, T_\varphi v) = \ang(T^{j-1}_\varphi v, T^j_\varphi v) = \arccos(|\cos(\varphi)|)=
\min(\varphi,\pi-\varphi).
$$
Hence we
obtain for $n\in\N$ the arithmetic mean
$$
\sup_{v\in\R^2} \frac 1n \sum_{j=1}^n\ang(T^{j-1}_\varphi v,
T^j_\varphi  v)
= \sup_{V\in\cG(1,2)} \frac{1}{n} \sum_{j=1}^{n} \ang(\Phi(j-1,0)V,\Phi(j,0)V)=
\min(\varphi,\pi-\varphi)
$$
and the same value for
both types of limits $\sup_{V\in\cG(1,2)} \lim_{n\to \infty}$
  and $ \lim_{n\to \infty}\sup_{V\in\cG(1,2)}$.

For general systems however,
it turns out that the limit does not necessarily commute with the
supremum, and sometimes the limit does not even exist. Therefore, we
introduce several different types of angular values.

\begin{definition} \label{defangularvalues}
  Let the invertible nonautonomous system \eqref{diffeq} be given.
  For every \linebreak $s\in \{1,\ldots,d\}$ define the quantities
   \begin{equation} \label{defsums}
    a_{k+1,k+n}(V) = \sum_{j=k+1}^{k+n} \ang(\Phi(j-1,0)V,\Phi(j,0)V) \quad
   n \in \N,\; k\in \N_0, \; V\in \mathcal{G}(s,d).
    \end{equation}
  \begin{enumerate}[i)]
  \item
    The upper resp.\ lower $s$-th {\bf inner angular value} is defined by
    \begin{equation}\label{dinner}
\begin{aligned}
  \bar \theta_s =  \limsup_{n\to\infty} \frac{1}{n} \sup_{ V \in \mathcal{G}(s,d)}
   a_{1,n}(V),\quad
  \underaccent{\bar}\theta_s = \liminf_{n\to\infty} \frac{1}{n}
  \sup_{ V \in \mathcal{G}(s,d)}
   a_{1,n}(V).
\end{aligned}
    \end{equation}
  \item
    The upper resp.\ lower $s$-th {\bf outer angular value} is defined by
    \begin{equation}\label{douter}
\begin{aligned}
  \hat \theta_{s} = \sup_{V \in \mathcal{G}(s,d)}
  \limsup_{n\to\infty} \frac{1}{n}  a_{1,n}(V), \quad
    \underaccent{\hat}\theta_s =\sup_{V \in \mathcal{G}(s,d)}
  \liminf_{n\to\infty} \frac{1}{n}  a_{1,n}(V).
\end{aligned}
    \end{equation}
    \item
    The upper resp.\ lower $s$-th {\bf uniform inner angular value} is defined by
\begin{equation}\label{duniinner}
\begin{aligned}
  \bar \theta_{[s]} = &
  \limsup_{n\to\infty} \frac{1}{n} \sup_{V \in \mathcal{G}(s,d)}
  \sup_{k\in\N_0}a_{k+1,k+n}(V),\\
  \underaccent{\bar}\theta_{[s]} = &
  \liminf_{n\to\infty} \frac{1}{n}\sup_{V \in \mathcal{G}(s,d)} \inf_{k\in\N_0} a_{k+1,k+n}(V).
\end{aligned}
\end{equation}
  \item
    The upper resp.\ lower $s$-th {\bf uniform outer angular value} is defined by
\begin{equation}\label{duniouter}
\begin{aligned}
  \hat \theta_{[s]} = & \sup_{V \in \mathcal{G}(s,d)}
  \lim_{n\to\infty} \frac{1}{n} \sup_{k\in\N_0}a_{k+1,k+n}(V), \\
  \underaccent{\hat}\theta_{[s]} = & \sup_{V \in \mathcal{G}(s,d)}
  \lim_{n\to\infty} \frac{1}{n} \inf_{k\in\N_0} a_{k+1,k+n}(V).
\end{aligned}
\end{equation}
\end{enumerate}

  \end{definition}

\begin{remark} \label{remtrivial}
  In the case $s=d$, all angular values are zero since the
    invertible system keeps the space $V=\R^d$ fixed and since
  $\ang(\R^d,\R^d) = 0$.
\end{remark}

Our guiding principle in forming these quantities is to
  seek the subspace $V$ which maximizes an angular value. The notions
  of 'upper' and 'lower' are motivated by the possible gap between $\limsup$
  and $\liminf$ while 'outer' and 'inner' result from the noncommuting
  $\lim$ and $\sup$. The corresponding uniform angular
  values (and their 'lower' and 'upper' variants)  become relevant
  when passing from autonomous to  nonautonomous systems;
  see Sections \ref{sec3.2} and \ref{sec5}.

As shorthand, we use
   an up/down bar for  upper/lower inner angular values
   and an up/down hat for upper/lower outer angular values, while their uniform
   equivalents are
  indicated by the bracketed index $[s]$.

Clearly, the $\limsup$ and $\liminf$ in \eqref{dinner}, \eqref{douter},
\eqref{duniinner} are finite due to the boundedness of the angles.
In Section
\ref{sec3} we prove that the $\limsup$ and $\liminf$ in \eqref{dinner}
actually become limits in the setting of random dynamical systems.
Let us further mention that the supremum for both quantities
in \eqref{dinner} can be replaced by a maximum since $a_{1,n}(V)$ depends
continuously on $V$ in the compact space $\mathcal{G}(s,d)$.

In the following lemma we show that the limits in \eqref{duniouter}
always exist and that the $\limsup$ in the definition
\eqref{duniinner} of $\bar \theta_{[s]}$ is in fact a limit. Further,
we  collect some easy relations between the various angular values.
\begin{lemma} \label{lem2.2}
  The limits in the definition \eqref{duniouter} of the uniform outer
  angular values exist in $[0,\frac{\pi}{2}]$ and the $\limsup$ in
  the definition of $\bar \theta_{[s]}$ is a limit.
  Moreover, the relations of Diagram \ref{eq2.4} hold for all $s=1,\ldots,d$.
\begin{diagram}
\begin{equation*}
    \begin{matrix}
    \underaccent{\hat} \theta_{[s]} & \le & \underaccent{\hat}
    \theta_s & \le & \hat \theta_s & \le & \hat \theta_{[s]}\\
    \rle& & \rle && \rle && \rle\\
    \underaccent{\bar} \theta_{[s]} & \le & \underaccent{\bar}
    \theta_s & \le & \bar \theta_s & \le & \bar \theta_{[s]}\\
    \end{matrix}
  \end{equation*}
\caption{Comparison of angular values.\label{eq2.4}}
\end{diagram}

  For the smallest and the largest value in this diagram we have the estimate
  \begin{align} \label{eqvalest}
    \sup_{V \in \mathcal{G}(s,d)}\inf_{k \in \N_0}\ang(\Phi(k,0)V, A_k
  \Phi(k,0)V) \le \underaccent{\hat}{\theta}_{[s]} \le \bar{\theta}_{[s]}
  \le \sup_{V \in \mathcal{G}(s,d)}\sup_{ k \in \N_0}\ang(V,A_kV).
\end{align}
\end{lemma}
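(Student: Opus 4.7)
The plan rests on a single additivity identity
\begin{equation*}
a_{k+1,k+n+m}(V) = a_{k+1,k+n}(V) + a_{k+n+1,k+n+m}(V),
\end{equation*}
which is immediate from \eqref{defsums}, together with the bound $0 \le \ang(\cdot,\cdot) \le \frac{\pi}{2}$ from Proposition \ref{prop2g}. First I would observe that, for each fixed $V$, this identity makes $n\mapsto b_n(V) := \sup_k a_{k+1,k+n}(V)$ subadditive and $n\mapsto c_n(V) := \inf_k a_{k+1,k+n}(V)$ superadditive (using that $a_{k+n+1,k+n+m}(V) = a_{(k+n)+1,(k+n)+m}(V)$, whose sup or inf over $k$ is controlled by $b_m(V)$ or $c_m(V)$ respectively). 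An additional $\sup_V$ preserves subadditivity, so $n\mapsto d_n := \sup_V b_n(V)$ is subadditive as well. Fekete's lemma then furnishes all the limits in \eqref{duniouter} and upgrades the $\limsup$ in the definition of $\bar\theta_{[s]}$ to a genuine limit, all lying in $[0,\frac{\pi}{2}]$.

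Next I would derive Diagram~\ref{eq2.4}. The horizontal inequalities follow by dividing the sandwich $c_n(V) \le a_{1,n}(V) \le b_n(V)$ by $n$, applying the appropriate $\liminf$, $\limsup$, or (from Step~1) $\lim$, and then taking $\sup_V$. The vertical inequalities combine the trivial $\liminf \le \limsup$ with the general interchange $\sup_V \liminf_n f_n(V) \le \liminf_n \sup_V f_n(V)$ and its $\limsup$ analogue, applied in turn to $f_n(V) = \tfrac{1}{n}a_{1,n}(V)$, to $f_n(V) = \tfrac{1}{n}b_n(V)$, and to $f_n(V) = \tfrac{1}{n}c_n(V)$; this gives the four comparisons $\underaccent{\hat}\theta_s \le \underaccent{\bar}\theta_s$, $\hat\theta_s \le \bar\theta_s$, $\hat\theta_{[s]} \le \bar\theta_{[s]}$, and $\underaccent{\hat}\theta_{[s]} \le \underaccent{\bar}\theta_{[s]}$.

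Finally, for \eqref{eqvalest} I would use $A_i\Phi(i,0)V = \Phi(i+1,0)V$ to identify each summand of $a_{k+1,k+n}(V)$ as $\ang(\Phi(i,0)V, A_i\Phi(i,0)V)$ with $i\in\{k,\ldots,k+n-1\}$. This gives the chain
\begin{equation*}
n\inf_{i\in\N_0}\ang(\Phi(i,0)V, A_i\Phi(i,0)V)\ \le\ a_{k+1,k+n}(V)\ \le\ n\sup_{V'\in\cG(s,d)}\sup_{k'\in\N_0}\ang(V',A_{k'}V'),
\end{equation*}
the upper bound being uniform in $k$ and $V$ because every $\Phi(i,0)V$ lies in $\cG(s,d)$. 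Dividing by $n$, passing to the limit (Step~1), and then applying $\sup_V$ produces both outer inequalities of \eqref{eqvalest}; the middle one, $\underaccent{\hat}\theta_{[s]} \le \bar\theta_{[s]}$, is already part of the diagram. The whole argument is essentially monotonicity bookkeeping once the additivity identity and Fekete's lemma are in place, so I do not anticipate a genuine obstacle.
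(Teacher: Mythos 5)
Your proposal is correct and follows essentially the same route as the paper: subadditivity of $\sup_k a_{k+1,k+n}(V)$ (and of its further $\sup_V$) plus superadditivity of $\inf_k a_{k+1,k+n}(V)$, Fekete's lemma for the limits, elementary monotonicity and the $\sup$--$\liminf/\limsup$ interchange for Diagram \ref{eq2.4}, and bounding single summands (equivalently, taking $n=1$ in the Fekete representations) for \eqref{eqvalest}. The only bookkeeping caveat is that for the inner-row inequalities the $\sup_V$ sits inside the $\liminf/\limsup$, so there one takes $\sup_V$ of the sandwich before passing to the limit, which your monotonicity argument covers anyway.
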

\begin{proof}
  For every $V\in \mathcal{G}(s,d)$, the sequence $a_n(V)= \sup_{k \in
    \N_0}a_{k+1,k+n}(V)$ lies in $[0,\frac{ n \pi}{2}]$ and is
  subadditive
  \begin{align*}
    a_{n+m}(V)= & \sup_{k\in \N_0}(a_{k+1,k+n}(V) + a_{k+n+1,k+n+m}(V))\\
    \le & \sup_{k\in \N_0}a_{k+1,k+n}(V)+ \sup_{\kappa \ge n}a_{\kappa+1,\kappa+m}(V)
    \le a_n(V) + a_m(V).
  \end{align*}
  By Fekete's subadditive lemma \cite[Lemma 4.2.7]{FH19} this ensures
  \begin{align*}
    \lim_{n \to \infty} \frac{1}{n}a_n(V) = \inf_{n\in
    \N}\frac{1}{n}a_n(V) \in [0,\frac{\pi}{2}].
  \end{align*}
  In a similar way, the sequence $a_n = \sup_{V\in \mathcal{G}(s,d)}a_n(V)$ turns
  out to be subadditive, which shows that $\limsup=\lim$ for the first quantity
  in \eqref{duniinner}. Further,
  the sequence $\alpha_n(V)= \inf_{k \in \N_0} a_{k+1,k+n}(V)$ turns out to be
  superadditive,
  i.e.\ $\alpha_{n+m}(V) \ge \alpha_n(V) + \alpha_m(V)$ for $n,m \in \N$, and thus
    \begin{align*}
      \lim_{n \to \infty} \frac{1}{n}\alpha_n(V) = \sup_{n\in \N}\frac{1}{n}\alpha_n(V) \in
      [0,\frac{\pi}{2}].
    \end{align*}
Next we prove the inequalities $\underaccent \hat \theta_s \le \hat
\theta_s \le \bar \theta_s \le \bar \theta_{[s]}$,
\begin{align*}
\underaccent \hat \theta_s &= \sup_{V\in\cG(s,d)} \liminf_{n\to \infty}\frac{1}{n}
a_{1,n}(V) \le \sup_{V\in\cG(s,d)} \limsup_{n\to \infty}\frac{1}{n}
a_{1,n}(V) = \hat \theta_s\\
&\le \limsup_{n\to \infty}\frac{1}{n} \sup_{V\in\cG(s,d)} a_{1,n}(V) = \bar
\theta_s \le  \limsup_{n\to \infty}\frac{1}{n} \sup_{V\in\cG(s,d)}
 \sup_{k\in\N_0} a_{k+1,k+n}(V) = \bar \theta_{[s]}.
\end{align*}
The remaining assertions in Diagram \ref{eq2.4} follow in a similar way.
Finally, note that Fekete's lemma leads to the representations
\begin{align*}
   \sup_{V \in \mathcal{G}(s,d)}\sup_{n \in \N} \frac{1}{n}\inf_{k \in \N_0}a_{k+1,k+n}(V)
 = \underaccent{\hat}{\theta}_{[s]} \le \bar{\theta}_{[s]}
 = \inf_{n\in \N}\frac{1}{n} \sup_{V \in \mathcal{G}(s,d)}\sup_{ k \in \N_0}a_{k+1,k+n}(V).
 \end{align*}
The inequalities \eqref{eqvalest} then follow  by
setting $n=1$ in  $\sup_n$ and $\inf_n$.
\end{proof}

We extend the
motivating example \eqref{rotation} and analyze in detail the outer angular values
of the $3$-dimensional system defined by

\begin{equation}\label{mot3d}
A_n = A =
\begin{pmatrix}
\cos(\varphi) & -\sin(\varphi) & 0\\\sin(\varphi) & \cos(\varphi) & 0
\\ 0 & 0 & 2
\end{pmatrix},\quad n \in \N_0,\
0 < \varphi \le \frac \pi 2.
\end{equation}
Denote by $e_j$ the $j$-th unit vector in $\R^3$. For $v\in
\Span(e_1,e_2)$, we get, cf.\ \eqref{rotation}, that $\ang(A^{i-1}v,
A^i v) = \varphi$ for all $i\in\N$. For $v\in\Span(e_3)$ one has $\ang(A^{i-1}v,
A^i v) = 0$, $i \in \N$.
Next, we take a vector with components in
both relevant subspaces. This vector is pushed under iteration with
$A$ towards the most unstable direction $e_3$. Thus, we expect that the
angle between two subsequent iterates converges to $0$. The following
estimate proves that this convergence is indeed geometric.
Consider $v =
\left(\protect\begin{smallmatrix}z\\1\protect\end{smallmatrix}\right)$
with $0\neq z \in \R^2$.
From the triangle inequality and the estimate \eqref{eq1.6} in Lemma
\ref{lem0} we find a constant $C>0$ such that for all $i\in\N$
\begin{align*}
  \ang(A^{i-1}v, A^i v) &= \ang\left(
  \begin{pmatrix}T_\varphi^{i-1} z\\[1mm]2^{i-1}\end{pmatrix},
  \begin{pmatrix}T_\varphi^{i} z\\[1mm]2^{i}\end{pmatrix}
  \right)
  =
  \ang\left(
  \begin{pmatrix}2^{1-i}T_\varphi^{i-1} z\\1\end{pmatrix},
  \begin{pmatrix}2^{-i}T_\varphi^{i} z\\1\end{pmatrix}
  \right)\\
  &\le \ang\left(
  \begin{pmatrix}2^{1-i}T_\varphi^{i-1} z\\1\end{pmatrix},
  \begin{pmatrix} 0\\1\end{pmatrix}
  \right)
 + \ang\left(
  \begin{pmatrix}0\\1\end{pmatrix},
  \begin{pmatrix}2^{-i}T_\varphi^{i} z\\1\end{pmatrix}
    \right)\\
    &\le \tan \ang\left(
  \begin{pmatrix}2^{1-i}T_\varphi^{i-1} z\\1\end{pmatrix},
  \begin{pmatrix} 0\\1\end{pmatrix}
  \right)
 + \tan \ang\left(
 \begin{pmatrix}2^{-i}T_\varphi^{i} z\\1\end{pmatrix},
   \begin{pmatrix}0\\1\end{pmatrix}
    \right)\\
  &  \le C \cdot 2^{-i}.
\end{align*}

Thus
$$
\frac 1n \sum_{i=1}^n \ang(A^{i-1} v,A^i v) \le \frac 1n
\sum_{i=1}^\infty C\cdot 2^{-i} = \frac{2C}n \to 0 \text{ as } n\to
\infty.
$$
As a consequence, all first outer angular values from Definition
\ref{defangularvalues} coincide and have the value $\varphi$, see
Figure \ref{ai} and Theorem \ref{cor2.4} for the inner angular values.

For analyzing the second outer angular values, we first note that for all
$V\in\cG(2,3)$ there exists a $u \in \Span(e_1,e_2)$ such that $V =
\Span(u,v)$ with $v\in\R^3$. Without loss of generality, we assume
that $u=e_1$. We observe for $v \in \Span(e_1,e_2)$ that $a_{1,n}(V)
= 0$ and for $v\in \Span(e_3)$, we obtain $a_{1,n}(V)= \varphi$.
Next, we consider the mixed case  $v=\begin{pmatrix} z_1 & z_2 &
1\end{pmatrix}^{\top}$, with $0\neq z \in \R^2$.
Let $W=\Span (e_1,e_3)$ then we get for $i\in\N$
$$
\ang(A^{i-1}V,A^iV) \le \ang(A^{i-1}V,A^{i-1}W) + \ang(A^{i-1}W,A^iW) + \ang(A^iW,A^iV).
$$
The second term is equal to $\varphi$ for all $i\in\N$.
We
conclude that all second outer angular values coincide with $\varphi$
by showing that the first and third term converge to zero with a
geometric rate.
Note that for $i\in\N_0$ we have
\begin{align*}
A^i V &= \Span(A^ie_1, A^i e_3 +
        A^i \begin{pmatrix}z_1&z_2&0\end{pmatrix}^\top)\\& =
           \Span(A^ie_1,e_3
        +2^{-i} A^i \begin{pmatrix}z_1&z_2&0\end{pmatrix}^\top),\\
A^i W &= \Span(A^i e_1,e_3).
\end{align*}
With $P_i = I+Q_i$, $Q_i = 2^{-i}A^i \begin{pmatrix}0 & 0 & z_1\\ 0 &
  0 & z_2 \\ 0 & 0 & 0\end{pmatrix}$
it follows that $A^i V = P_iA^i W$.
Furthermore, we find an $i$-independent constant $C>0$ such
that $\|(I-P_i)v\|=\|Q_i v\| \le 2^{-i} C \|P_i v\|$ for all $v\in
\R^3$. Thus, Lemma \ref{lem0}, (ii) applies for sufficiently large
$i\in\N$ and provides the estimate
$$
\ang(A^i V, A^iW) \le 2^{-i} \frac C{(1-2^{-i}C)^{\frac 12}}
$$
which completes the proof.

\begin{figure}[hbt]
    \begin{center}
      \includegraphics[width=0.7\textwidth]{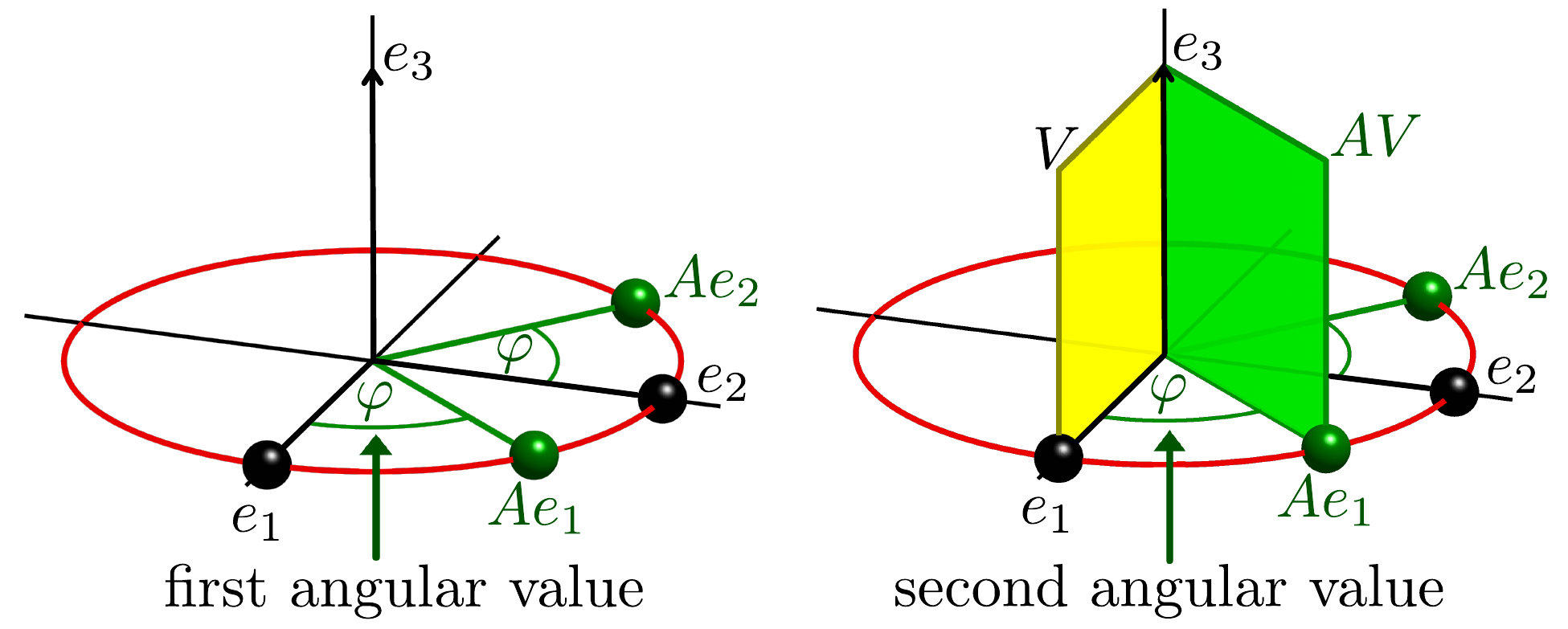}
    \end{center}
\caption{\label{ai} First and second angular values for the motivating
three-dimensional system \eqref{mot3d}.}
\end{figure}
In general, equality does not hold in Diagram \ref{eq2.4}. This phenomenon
is illustrated in Section \ref{sec3.2} by Examples \ref{example1}
and \ref{example2}.
However, angular values do agree  when the angles of iterates or their
averages occurring in
Definition \ref{defangularvalues} have some uniformity properties.
For this purpose let us introduce for $n \in \N$ the functions
\begin{equation} \label{Fbequi}
  \begin{aligned}
    b_n: \mathcal{G}(s,d)\to \R, & \quad b_n(V)  =
    \ang(\Phi(n-1,0)V,\Phi(n,0)V),
     \end{aligned}
\end{equation}
and recall $a_{1,n}(V)= \sum_{j=1}^n b_j(V)$ from Definition
\ref{defangularvalues}. Let us also recall the notion of uniform
almost periodicity
for a sequence of functions.
\begin{definition}\label{unifap}
Given a set $\cV$ and a Banach space $(\cW,\|\cdot\|)$. A sequence of mappings
$b_n: \cV\to \cW$, $n \in \N$ is called uniformly almost
periodic if
\begin{align*}
&\forall \eps >0\ \exists P\in\N:\forall V \in \cV\ \forall \ell\in\N\ \exists
p\in\{\ell,\dots,\ell+P\}: \\
&\forall n \in \N: \|b_n(V)-b_{n+p}(V)\|\le \eps.
\end{align*}
\end{definition}
\begin{remark} \label{remuniform}
  Our definition is slightly weaker than the standard notion
  (\cite[Ch.4.1]{P89}) which requires for each $\eps>0$ the
  existence of a relatively dense set $\cP\subset \N$ such that
\begin{equation*}
\forall n\in\N\ \forall p \in\cP \ \forall V \in \cV:
\|b_n(V)-b_{n+p}(V)\|\le \eps.
\end{equation*}
This is more restrictive,
since the choice of $p\in\{\ell,\dots,\ell+P\}\cap \cP$ is  uniform in $V$.
\end{remark}

The following Proposition \ref{propequi}
will be used repeatedly when determining angular values
for the two-dimensional case; see Proposition \ref{lem2.3}.
First, we state a crucial observation, which is proven in the
Supplementary materials \ref{app2}.
\begin{lemma}\label{estap}
Let $b_n: \cV\to \cW$, $n \in \N$ be a sequence of uniformly almost periodic
and uniformly bounded functions.
Then for all $\eps >0$ there exists $N\in\N$ such that for all $n\ge m\ge N$,
$ k\in\N$, $V\in\cV$
\begin{equation*}
 \Big\|\frac 1n \sum_{j=1}^n b_j(V) - \frac 1m \sum_{j=1}^m
b_{j+k}(V)\Big\| \le \eps.
\end{equation*}
\end{lemma}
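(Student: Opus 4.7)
The plan is to prove the estimate in three stages: first establish near shift-invariance of finite averages, then deduce that the Cesàro averages $c_n(V) := \tfrac{1}{n}\sum_{j=1}^n b_j(V)$ form a Cauchy sequence uniformly in $V$, and finally combine the two. Throughout, fix $\eps>0$, set $\eps' = \eps/6$, let $M := \sup_{n,V}\|b_n(V)\|$, and invoke uniform almost periodicity to produce $P\in\N$ such that for every $V\in\cV$ and every $\ell\in\N$ there exists $p\in\{\ell,\ldots,\ell+P\}$ with $\|b_n(V)-b_{n+p}(V)\|\le\eps'$ for all $n\in\N$.

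For the first step, given a shift $a\in\N$, apply the almost-periodicity with $\ell=a$ to obtain $p=p(V,a)\in\{a,\ldots,a+P\}$, so the offset $r := p-a$ lies in $\{0,\ldots,P\}$. Replacing $\sum_{j=1}^m b_{j+a}(V)$ first by $\sum_{j=1}^m b_{j+p}(V)$ (which differs in at most $2P$ summands of norm $\le M$) and then applying the termwise almost-period estimate to pass from $\sum_{j=1}^m b_{j+p}(V)$ to $\sum_{j=1}^m b_j(V)$ yields
\begin{equation*}
\Big\|\tfrac{1}{m}\sum_{j=1}^m b_{j+a}(V) - \tfrac{1}{m}\sum_{j=1}^m b_j(V)\Big\| \le \eps' + \tfrac{2PM}{m}.
\end{equation*}
Choosing $N_1\ge 2PM/\eps'$ makes the right-hand side at most $2\eps'$ for every $m\ge N_1$, uniformly in the shift $a$ and in $V$.

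For the second step, I compare $c_n$ and $c_m$ through the common refinement $c_{nm}$. Splitting the $nm$ summands defining $c_{nm}(V)$ into $n$ consecutive blocks of length $m$ gives $c_{nm}(V)=\tfrac{1}{n}\sum_{j=0}^{n-1}\tfrac{1}{m}\sum_{i=1}^m b_{i+jm}(V)$, and by step one each inner average lies within $2\eps'$ of $c_m(V)$, so $\|c_{nm}(V)-c_m(V)\|\le 2\eps'$. An analogous splitting into $m$ blocks of length $n$ yields $\|c_{nm}(V)-c_n(V)\|\le 2\eps'$ whenever $n\ge N_1$, and the triangle inequality produces $\|c_n(V)-c_m(V)\|\le 4\eps'$ for all $n,m\ge N_1$, uniformly in $V$.

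Combining, with $N:=N_1$, for $n\ge m\ge N$, any $k\in\N$, and any $V\in\cV$,
\begin{equation*}
\Big\|c_n(V)-\tfrac{1}{m}\sum_{j=1}^m b_{j+k}(V)\Big\| \le \|c_n(V)-c_m(V)\| + \Big\|c_m(V)-\tfrac{1}{m}\sum_{j=1}^m b_{j+k}(V)\Big\| \le 4\eps' + 2\eps' = \eps.
\end{equation*}
The main subtlety is that the almost period $p$ furnished by the definition depends on $V$ and on the chosen shift, so one cannot fix a single $p$ uniformly in $V$; however, the quantitative constants $\eps'$ and $P$ are uniform, which is exactly what is needed to produce one threshold $N$ valid for all $V\in\cV$ and all $k\in\N$ simultaneously.
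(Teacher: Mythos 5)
Your argument is correct, and it rests on the same basic mechanism as the paper's proof -- the almost period $p$ furnished by Definition \ref{unifap} lies within $P$ of any prescribed shift, so re-indexing a sum costs at most $2PM$ (with $M=\sup_{n,V}\|b_n(V)\|$) plus $\eps'$ per term -- but the way you compare averages of different lengths is organized differently. The paper fixes one block length $L=\lceil \frac{16}{\eps}PM\rceil$, proves near shift-invariance of the single block sum $c(V)=\sum_{j=1}^L b_j(V)$, and then decomposes $n$ and $m$ modulo $L$, which requires explicit bookkeeping of the remainders $r_n,r_m$ and produces a threshold $N$ of order $PM^2/\eps^2$. You instead prove shift-invariance of the length-$m$ average directly for every $m\ge N_1$ and compare $c_n$ with $c_m$ through the common refinement $c_{nm}$, splitting the $nm$ terms once into blocks of length $m$ and once into blocks of length $n$; this eliminates the intermediate scale and all remainder terms and yields the slightly better threshold $N_1\sim PM/\eps$, at the price of invoking the shift-invariance estimate at every block start $jm$ (resp.\ $jn$) rather than only at the shifts $k$ appearing in the statement. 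Two cosmetic remarks: in the refinement step the block with $j=0$ corresponds to shift $0$, where the inner average equals $c_m(V)$ (resp.\ $c_n(V)$) exactly, so nothing is lost; and the estimate $\|c_{nm}(V)-c_m(V)\|\le 2\eps'$ of course also presupposes $m\ge N_1$, which is guaranteed in your final application since $n\ge m\ge N_1$. Both proofs handle the $V$- and shift-dependence of the almost period in the same way, through the uniform constants $P$ and $\eps'$ only.
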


\begin{proposition} \label{propequi} The following statements hold
  for all $s \in \{1,\ldots,d\}$.
  \begin{itemize}
  \item[(a)] If the functions
    $\frac{1}{n}a_{1,n}\colon\mathcal{G}(s,d)\to \R $
    converge uniformly to the constant function  $\varphi \in [0,\frac{\pi}{2}]$
    as $n  \to \infty$,
  then all nonuniform angular values coincide, i.e.\
  $\bar{\theta}_s=\hat{\theta}_s=
  \underaccent{\bar}\theta_s=\underaccent{\hat}\theta_s=\varphi$.
\item[(b)] If the functions $b_n$, $n \in \N$ from \eqref{Fbequi} are uniformly almost
  periodic, then all angular values coincide,
  $$
\underaccent{\hat}\theta_{[s]} = \underaccent{\hat}\theta_{s} = \hat
\theta_s = \hat\theta_{[s]} = \underaccent{\bar}\theta_{[s]} =
\underaccent{\bar}\theta_{s} = \bar \theta_s = \bar\theta_{[s]}.
$$
  \end{itemize}
\end{proposition}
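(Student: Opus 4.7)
The two parts share a common strategy -- push a limit through a supremum and, in (b), also through $\sup_k/\inf_k$. For (a), uniform convergence on the compact Grassmannian supplies this directly; for (b), the uniform Cauchy-type bound of Lemma \ref{estap} does the work.

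For part (a), I will use that uniform convergence of $\frac{1}{n}a_{1,n}(V)$ to the constant $\varphi$ on $\mathcal{G}(s,d)$ means $\sup_{V\in\mathcal{G}(s,d)}|\frac{1}{n}a_{1,n}(V)-\varphi|\to 0$. This immediately yields $\sup_V\frac{1}{n}a_{1,n}(V)\to\varphi$, hence $\bar\theta_s=\underaccent{\bar}\theta_s=\varphi$. Since uniform convergence entails pointwise convergence, $\limsup_n\frac{1}{n}a_{1,n}(V)=\liminf_n\frac{1}{n}a_{1,n}(V)=\varphi$ for every $V$, so taking $\sup_V$ of the constant $\varphi$ gives $\hat\theta_s=\underaccent{\hat}\theta_s=\varphi$.

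For part (b), the angles $b_n(V)\in[0,\frac{\pi}{2}]$ are uniformly bounded, so Lemma \ref{estap} applies on $\mathcal{V}=\mathcal{G}(s,d)$. First, setting $m=n$ in its conclusion gives
\begin{equation*}
\Big|\tfrac{1}{n}a_{1,n}(V)-\tfrac{1}{n}a_{k+1,k+n}(V)\Big|\le\eps,\qquad n\ge N,\ k\in\N,\ V\in\mathcal{G}(s,d),
\end{equation*}
and the case $k=0$ is trivially included. Next, applying the estimate with a fixed $k\ge 1$ at two sizes $n,n'\ge N$ and invoking the triangle inequality produces a uniform-in-$V$ Cauchy condition in $n$ for $\frac{1}{n}a_{1,n}(V)$, so this sequence converges uniformly on $\mathcal{G}(s,d)$ to a limit function $f(V)$. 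Finally, since the displayed bound shows that $\frac{1}{n}a_{k+1,k+n}(V)$ sits within $\eps$ of $\frac{1}{n}a_{1,n}(V)$ for all $k\in\N_0$ and $n\ge N$, both $\frac{1}{n}\sup_{k\in\N_0} a_{k+1,k+n}(V)$ and $\frac{1}{n}\inf_{k\in\N_0} a_{k+1,k+n}(V)$ converge uniformly in $V$ to the same $f(V)$. With these uniform limits in hand, every quantity in Diagram \ref{eq2.4} unwinds to $\sup_V f(V)$: the four outer angular values because $\limsup_n$ and $\liminf_n$ collapse to $f(V)$ pointwise; the four inner and uniform-inner values by interchanging $\sup_V$ with the (now uniform) limit, which is justified by uniform convergence on the compact space $\mathcal{G}(s,d)$.

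The main difficulty is precisely the non-commutativity of $\sup_V$ with $\lim_n$ and with $\sup_k/\inf_k$ that originally forces the eight separate definitions apart. Lemma \ref{estap} neutralizes this by supplying joint uniformity in $V$ and $k$, so no further compactness argument beyond the continuity of $V\mapsto a_{1,n}(V)$ on $\mathcal{G}(s,d)$ is required.
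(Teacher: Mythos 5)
Your part (a) is the same one-line argument as the paper's. For part (b) your route is correct but genuinely different in structure. The paper only proves the single extreme inequality $\bar\theta_{[s]}\le\underaccent{\hat}\theta_{[s]}$: it picks, for each large $n$, a near-maximizing pair $(V_n,k_n)$ for $\bar\theta_{[s]}$, uses Lemma \ref{estap} to transfer this to shifted averages of a single fixed $V_N$ uniformly in the shift $h$, and then lets Diagram \ref{eq2.4} collapse all eight values. You instead extract more from Lemma \ref{estap}: taking $m=n$ gives $\bigl|\frac1n a_{1,n}(V)-\frac1n a_{k+1,k+n}(V)\bigr|\le\eps$ uniformly in $k$ and $V$, and comparing two lengths $n\ge n'\ge N$ through a common shifted average gives a uniform Cauchy condition, so $\frac1n a_{1,n}$ converges uniformly on $\mathcal{G}(s,d)$ to a limit function $f$, and the $\sup_k$ and $\inf_k$ versions converge uniformly to the same $f$; every one of the eight quantities then equals $\sup_V f$ by interchanging $\sup_V$ with a uniform limit (the $k=0$ case and the validity of setting $m=n$, $k\ge1$ in Lemma \ref{estap} are exactly as you note). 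What your approach buys is a stronger intermediate statement -- uniform convergence of all the averaged angle sums to one limit function, which reduces (b) to the mechanism of (a) and identifies the common value explicitly as $\sup_V f$ -- at the cost of a slightly longer argument; the paper's proof is shorter because it leans on the pre-established chain of inequalities in Lemma \ref{lem2.2} and never needs the limit function itself. Both hinge on the same key tool, Lemma \ref{estap}, so I see no gap in your argument.
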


\begin{proof}

  The claim in (a) is clear since $\limsup$ and $\liminf$
  in  \eqref{douter} are limits and the supremum is continuous w.r.t.\
  uniform convergence.

  Lemma \ref{lem2.2} shows that it suffices for (b) to prove
  $\bar \theta_{[s]}\le \underaccent{\hat}\theta_{[s]} $.
  By the definition \eqref{duniinner} we find for every $\eps>0$
a number $N_1\in\N$ and for all $n\ge N_1$
elements  $V_n\in\cG(s,d)$, $k_n \in\N$ such that
$$
\Big| \bar \theta_{[s]} - \frac 1n \sum_{j=1}^n b_{j+k_n}(V_n)\Big|\le \frac \eps 2.
$$
From  Lemma \ref{estap} we obtain $N=N(\varepsilon)\in\N$, $N\ge N_1$ such that for
all $n\ge N$, $h\in \N_0$
\begin{align*}
\Big|\frac 1n \sum_{j=1}^n b_j(V_N) - \frac 1N \sum_{j=1}^N
  b_{j+k_N}(V_N)\Big| +
\Big|\frac 1n \sum_{j=1}^n b_j(V_N) - \frac 1n \sum_{j=1}^n
  b_{j+h}(V_N)\Big| \le \frac \eps 2.
\end{align*}
Combining these results yields for all $h\in \N_0$, $n\ge N$:
\begin{align*}
  \bar\theta_{[s]}& \le \frac 1n \sum_{j=1}^n b_{j+h}(V_N)
  + \Big|\frac 1n \sum_{j=1}^n b_{j}(V_N) - \frac 1n \sum_{j=1}^n b_{j+h}(V_N)
\Big|\\
&+ \Big|\frac 1N \sum_{j=1}^N b_{j+k_N}(V_N) - \frac 1n \sum_{j=1}^n
b_{j}(V_N)\Big|+ \Big|\bar\theta_{[s]}-\frac 1N \sum_{j=1}^N b_{j+k_N}(V_N)\Big|
  \\
&\le \frac \eps 2 + \frac \eps 2  + \frac 1n \sum_{j=1}^n b_{j+h}(V_N).
\end{align*}

Taking the infimum over $h$ and the limit $n \to \infty$ (see Lemma
\ref{lem2.2} for its existence) shows that there exists
$V(\eps):=V_{N(\eps)}\in \cG(s,d)$ satisfying
\begin{equation*}
\bar \theta_{[s]} -  \eps \le  \lim_{n\to \infty} \inf_{h\in\N_0} \frac 1n
\sum_{j=1}^n b_{j+h}(V(\eps)).
\end{equation*}
Thus we deduce
\begin{equation*}
\bar \theta_{[s]} - \eps \le  \sup_{V\in\cG(s,d)}  \lim_{n\to \infty}\inf_{h\in\N_0}
\frac 1n  \sum_{j=1}^n b_{j+h}(V) = \underaccent{\hat}\theta_{[s]} \le
\bar \theta_{[s]}.
\end{equation*}
\
\end{proof}

Next, we apply a kinematic similarity, induced by a
transformation $\tilde{u}_n=Q_n u_n$ with $Q_n \in \mathrm{GL}(\R^d)$
 to \eqref{diffeq}, i.e.\ we consider
\begin{equation} \label{difftransform}
  \tilde{u}_{n+1} = \tilde{A}_n \tilde{u}_n, \quad \tilde{A}_n =
  Q_{n+1}A_n Q_n^{-1},
\end{equation}
and ask when angular values remain unchanged.

\begin{proposition}(Invariance of angular values) \label{prop2}
  \begin{enumerate}[(i)]
  \item
   Assume $Q_n=r_nQ$, $n \in \N$ with $r_n\in \R$, $r \neq 0$ and $Q \in \R^{d,d}$
   orthogonal.
  Then the angular values of \eqref{diffeq} and \eqref{difftransform}
  agree.
\item Assume constant transformation matrices $Q_n=Q, n \in \N_0$
  with $Q$ invertible. If any of the values $\theta_s \in \{
  \underaccent{\hat}\theta_{[s]}, \underaccent{\hat}\theta_{s}, \hat
\theta_s, \hat\theta_{[s]}, \underaccent{\bar}\theta_{[s]},
\underaccent{\bar}\theta_{s}, \bar \theta_s, \bar\theta_{[s]}\}$
  in Definition
  \ref{defangularvalues} vanishes for the system \eqref{diffeq}
  then the same angular value vanishes for the transformed system \eqref{difftransform}.
  \end{enumerate}
\end{proposition}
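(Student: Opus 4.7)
The plan is to establish a clean correspondence between the subspace orbits of the two systems and then track how this correspondence behaves under the various $\sup$/$\inf$/$\limsup$/$\liminf$ operations in Definition \ref{defangularvalues}. Telescoping $\tilde A_n = Q_{n+1}A_nQ_n^{-1}$ yields $\tilde\Phi(j,0) = Q_j\Phi(j,0)Q_0^{-1}$. In part (i) the scalar prefactors $r_j$ do not affect subspaces, so at the level of $\mathcal{G}(s,d)$ one has $\tilde\Phi(j,0)\tilde V_0 = Q\,\Phi(j,0)\,Q^{-1}\tilde V_0$; in part (ii) the same identity holds directly since $Q_n \equiv Q$. Setting $V_0 := Q^{-1}\tilde V_0$ and writing $W_i := \Phi(i,0)V_0$ for the original orbit, each summand in $\tilde a_{k+1,k+n}(\tilde V_0)$ becomes $\ang(QW_{j-1},QW_j)$.

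For part (i), $Q$ is orthogonal and therefore preserves angles between subspaces, giving the termwise identity $\tilde a_{k+1,k+n}(\tilde V_0) = a_{k+1,k+n}(V_0)$ for every $k,n$ and every $\tilde V_0$. Because $\tilde V_0 \mapsto Q^{-1}\tilde V_0$ is a bijection of $\mathcal{G}(s,d)$, every $\sup_{V\in \mathcal{G}(s,d)}$, $\sup_{k\in\N_0}$, $\inf_{k\in\N_0}$, $\limsup_n$, and $\liminf_n$ appearing in Definition \ref{defangularvalues} yields the same value for the two systems, so all eight angular values coincide.

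For part (ii), with $Q$ merely invertible, I apply Lemma \ref{lem2.6} with $\kappa = \|Q^{-1}\|\,\|Q\|$ to obtain the pointwise estimate $\ang(QW_{j-1},QW_j) \le \pi\kappa(1+\kappa)\,\ang(W_{j-1},W_j)$, uniformly in $j,k,V_0$. Summing and dividing by $n$ gives $\tfrac{1}{n}\tilde a_{k+1,k+n}(\tilde V_0) \le \pi\kappa(1+\kappa)\,\tfrac{1}{n}a_{k+1,k+n}(Q^{-1}\tilde V_0)$. Each of the eight angular values is obtained by applying monotone operations ($\sup$, $\inf$, $\limsup$, $\liminf$) to the nonnegative quantity $\tfrac{1}{n}a_{k+1,k+n}(V)$; applied to the dominated sequence, the same operations yield at most $\pi\kappa(1+\kappa)$ times the corresponding original angular value. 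In particular, if the original value vanishes, so does the transformed one.

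The main obstacle is combinatorial bookkeeping rather than analysis: one must verify that the single Grassmannian bijection $\tilde V \leftrightarrow Q^{-1}\tilde V$ simultaneously implements the change of variable for all $\sup_V$, $\sup_k/\inf_k$, and $\limsup_n/\liminf_n$ compositions appearing in the eight angular values. Once the pointwise identity (resp.\ pointwise $\pi\kappa(1+\kappa)$-bound) is in hand, monotonicity of the $\sup$ and $\liminf$/$\limsup$ operations makes the passage to angular values automatic, and no appeal to subadditivity or to the auxiliary Lemma \ref{lem2.2} is needed.
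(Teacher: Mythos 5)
Your proposal is correct and takes essentially the same route as the paper: the conjugation identity $\Phi_{\tilde A}(j,0)Q_0=Q_j\Phi_A(j,0)$, invariance of principal angles under scalings and orthogonal maps for part (i), and the angle bound of Lemma \ref{lem2.6} applied to $S=Q$ for part (ii). The paper's proof is simply terser about the bookkeeping over the eight value types, which you spell out via the Grassmannian bijection and monotonicity.
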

\begin{proof}
First note that the solution operators $\Phi_A$, $\Phi_{\tilde{A}}$  of
\eqref{diffeq}, \eqref{difftransform}
  are related by
  \begin{equation} \label{relatePhi}
   \Phi_{\tilde{A}}(n,m)Q_m = Q_n \Phi_A(n,m), \quad n,m \in\N_0.
  \end{equation}
  The result of (i) follows from \eqref{relatePhi} and the invariance of angles
  under scalings and orthogonal transformations (cf.\ Proposition \ref{prop1})
  \begin{align*}
    \ang(\Phi_{\tilde{A}}(j-1,0)QV,\Phi_{\tilde{A}}(j,0)QV)& =
    \ang(\frac{r_{j-1}}{r_0}Q\Phi_A(j-1,0)V,\frac{r_{j}}{r_0} Q\Phi_A(j,0)V) \\
    &=\ang(\Phi_A(j-1,0)V, \Phi_A(j,0)V).
  \end{align*}
   For case (ii) the relation \eqref{relatePhi} reads  $\Phi_{\tilde{A}}(j,0)Q =  Q \Phi_A(j,0)$
  and the assertion follows from the angle-boundedness \eqref{anglebound}
  of the matrix $S=Q$.
\end{proof}
 One can strengthen Proposition \ref{prop2} as follows.
For assertion (i) it is sufficient if
$Q_n=r_nP_n$ where $P_n$ converges to some orthogonal matrix, and for
assertion (ii) it is sufficient if $Q_n$ converges to some
invertible matrix.
However, we do not expect substantially more general
transformations to leave all angular values invariant. For example, if a
  kinematic similarity preserves the single terms
  $\ang(u_n,A_n u_n) =\ang(Q_n u_n,Q_{n+1} A_n u_n)$, for all $n \in
  \N_0$ and if a condition is desired which does not  depend on the
  particular choice of $A_n$,
one is led to the
property $\ang(Q_nu,Q_{n+1}v)=\ang(u,v)$ for all $u,v \in \R^d$,
$n\in\N_0$.
The latter condition implies that all matrices  $Q_n$, $n\in\N_0$ are
 multiples of a common orthogonal matrix.

Finally, we discuss an invariance property of maximizers which occur
with the outer angular values.
Starting with the difference equation \eqref{diffeq}, we define
for $\eta,n \in \N_0$ the matrices $A_n(\eta) := A_{n+\eta}$. Denote by
$\Phi_{\eta}^{+}$ the solution operator of the shifted difference equation
\begin{equation}\label{shifted}
u_{n+1} = A_{n+\eta} u_n,\quad n\in\N_0
\end{equation}
and observe that for all $n,m,\eta \in \N_0$
$$
\Phi_{\eta}^+(n,m) = \Phi(n+\eta,m+\eta).
$$
Let $\hat \theta_s(\eta)$ be the $s$-th upper outer
angular value for
\eqref{shifted}. The corresponding maximizers that occur
with the outer values are given by
$$
\hat \cV_s(\eta) = \Big\{V\in\cG(s,d): \hat\theta_s(\eta) = \limsup_{n\to
  \infty} \frac 1n \sum_{j=1}^n
\ang(\Phi_{\eta}^+(j-1,0)V,\Phi_{\eta}^+(j,0)V)\Big\}.
$$
Note that this set may be empty.
We obtain the following invariance.

\begin{proposition}\label{invar1}
Let $A_n \in \R^{d,d}$, $n\in\N_0$ be invertible matrices.
Then the following relation holds for all $\eta \in \N$,
  \begin{equation}\label{invari}
  A_\eta \hat \cV_s(\eta) = \hat \cV_s(\eta+1).
  \end{equation}
\end{proposition}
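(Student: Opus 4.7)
The plan is to prove \eqref{invari} by showing that $V \mapsto A_\eta V$, which is a bijection on $\mathcal{G}(s,d)$ since $A_\eta$ is invertible, transports the set $\hat{\mathcal{V}}_s(\eta)$ onto $\hat{\mathcal{V}}_s(\eta+1)$. The core is a cocycle identity that equates the Cesàro sums defining the two limsups, modulo a boundary term that washes out.

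First I would pin down how the shifted solution operators behave under the transformation by $A_\eta$. Using $\Phi_\eta^+(n,m) = \Phi(n+\eta,m+\eta)$, a direct index computation gives, for every $j \ge 1$ and every $V \in \mathcal{G}(s,d)$,
\begin{equation*}
\Phi_{\eta+1}^+(j-1,0)\, A_\eta V \;=\; \Phi(j+\eta,\eta+1)\,A_\eta V \;=\; \Phi(j+\eta,\eta)\,V \;=\; \Phi_\eta^+(j,0)\,V,
\end{equation*}
and the analogous identity $\Phi_{\eta+1}^+(j,0)A_\eta V = \Phi_\eta^+(j+1,0)V$. Setting $\alpha_j^\eta(V) = \ang(\Phi_\eta^+(j-1,0)V,\Phi_\eta^+(j,0)V)$, these two equalities yield $\alpha_j^{\eta+1}(A_\eta V) = \alpha_{j+1}^\eta(V)$, so that after a telescoping of indices
\begin{equation*}
\sum_{j=1}^n \alpha_j^{\eta+1}(A_\eta V) \;=\; \sum_{j=2}^{n+1} \alpha_j^\eta(V) \;=\; \sum_{j=1}^{n+1}\alpha_j^\eta(V) \;-\; \alpha_1^\eta(V).
\end{equation*}

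Dividing by $n$ and letting $n \to \infty$, the boundary term $\alpha_1^\eta(V)/n$ vanishes and the factor $(n+1)/n$ tends to $1$; since each angle is bounded by $\pi/2$, passing to $\limsup$ gives
\begin{equation*}
\limsup_{n\to\infty}\frac{1}{n}\sum_{j=1}^n \alpha_j^{\eta+1}(A_\eta V) \;=\; \limsup_{n\to\infty}\frac{1}{n}\sum_{j=1}^n \alpha_j^\eta(V)
\end{equation*}
for every $V \in \mathcal{G}(s,d)$. Taking the supremum of both sides over $V$ and using that $V \mapsto A_\eta V$ is a bijection on $\mathcal{G}(s,d)$ yields $\hat\theta_s(\eta)=\hat\theta_s(\eta+1)$. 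Combined with the displayed identity, $V$ realizes the $\limsup$-value $\hat\theta_s(\eta)$ in the $\eta$-shifted system iff $A_\eta V$ realizes $\hat\theta_s(\eta+1)$ in the $(\eta+1)$-shifted system, which is exactly \eqref{invari}.

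The only place that requires care — and hence the main obstacle — is keeping the cocycle bookkeeping in step one correct; once that operator identity is in hand, the rest reduces to a telescoping argument and the trivial observation that $A_\eta$ is a bijection of $\mathcal{G}(s,d)$. No ergodic or topological input is needed, and in particular one does not need either $\hat{\mathcal{V}}_s(\eta)$ or $\hat{\mathcal{V}}_s(\eta+1)$ to be nonempty for the equality to hold.
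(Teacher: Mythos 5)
Your proof is correct and follows essentially the same route as the paper: the identity $\Phi_{\eta+1}^+(j-1,0)A_\eta V=\Phi_\eta^+(j,0)V$, the resulting shift of the Ces\`aro sum by one term, the vanishing boundary term giving equality of the limsups (hence $\hat\theta_s(\eta)=\hat\theta_s(\eta+1)$), and bijectivity of $A_\eta$ on $\cG(s,d)$ to conclude \eqref{invari}. Your observation that the empty case needs no separate treatment is a minor streamlining of the paper's argument, not a different approach.
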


\begin{proof}
Fix $\eta \in \N$ and let $V\in \cG(s,d)$.
Then we get
\begin{align*}
 & \frac 1n \sum_{j=1}^n \ang(\Phi_{\eta+1}^+(j-1,0)A_\eta
   V,\Phi_{\eta+1}^+(j,0)A_\eta V)\\
 & = \frac 1n  \sum_{j=1}^n \ang(\Phi(j+\eta,\eta+1)A_\eta
   V,\Phi(j+\eta+1,\eta+1)A_\eta V)\\
 & = \frac 1n  \sum_{j=1}^n
   \ang(\Phi(j+\eta,\eta)V,\Phi(j+\eta+1,\eta)V)\\
 & = \frac {n+1}n \frac 1{n+1} \left( \sum_{j=1}^{n+1}
   \ang(\Phi_{\eta}^+(j-1,0)V,\Phi_{\eta}^+(j,0)V) - \ang(V,\Phi_{\eta}^+(1,0)V)\right).
\end{align*}
Taking $\limsup$ as $n\to \infty$ we have
\begin{align*}
&\limsup_{n\to\infty} \frac 1n \sum_{j=1}^n \ang(\Phi_{\eta+1}^+(j-1,0)A_\eta
   V,\Phi_{\eta+1}^+(j,0)A_\eta V)\\
&= \limsup_{n\to\infty} \frac 1n \sum_{j=1}^n \ang(\Phi_{\eta}^+(j-1,0)
   V,\Phi_{\eta}^+(j,0) V).
\end{align*}
In the case $\hat \cV_s(\eta) = \emptyset$ then $\hat \cV_s(\eta+1) =
\emptyset$ and \eqref{invari} is trivial. Otherwise, the invertibility
of $A_\eta$ yields
$$
V \in \hat \cV_s(\eta) \Leftrightarrow A_\eta V \in \hat \cV_s(\eta+1)
$$
which proves \eqref{invari}.
\end{proof}

A corresponding result also holds for lower outer angular values as
well as for uniform outer angular values.

\subsection{Some nonautonomous key examples}\label{sec3.2}
Upper, lower, uniform respectively non-uniform outer and inner angular values
do not coincide in general. The following
examples illustrate this fact.

First, we construct an example which possesses different upper, lower and
uniform angular values.
A related example in continuous time can be found in \cite[Example
2.2]{dv07}. There, the authors illustrate that the Lyapunov spectrum
may be a proper subset of the Sacker-Sell spectrum and that generally
both spectra do not consist of isolated points only.

\begin{example}\label{example1}
Fix $0 \le \varphi_0 < \varphi_1 \le \frac \pi 2$ and let $T_\varphi :=
\begin{pmatrix}
\cos\varphi & -\sin\varphi\\ \sin\varphi& \cos\varphi
\end{pmatrix}$.
For $n\in\N_0$, we define
$$
A_n =
\begin{cases}
T_{\varphi_0},& \text{ for } n = 0 \lor n \in \bigcup_{\ell=1}^\infty
[2^{2\ell-1},2^{2\ell} -1]\cap \N,\\
T_{\varphi_1},&\text{ otherwise.}
\end{cases}
$$
Table \ref{T1} illustrates this construction.

\begin{table}[hbt]
  \begin{center}
  \begin{tabular}{c|cccccccccccccccccccccc}
    $n$ & 0 & 1 & 2 & 3 & 4 & 5 & 6 & 7 & 8 & \dots & 15 & 16\\\hline
   $A_n$ & $T_{\varphi_0}$&$T_{\varphi_1}$& $T_{\varphi_0}$&
   $T_{\varphi_0}$ & $T_{\varphi_1}$ & $T_{\varphi_1}$ & $T_{\varphi_1}$ &
   $T_{\varphi_1}$ & $T_{\varphi_0}$ & $\dots$ & $T_{\varphi_0}$ &
    $T_{\varphi_1}$
  \end{tabular}
  \normalsize
\caption{Construction of $(A_n)_{n\in\N_0}$.\label{T1}}
\end{center}
\end{table}

Inner and outer angular values coincide for the nonautonomous
difference equation
$$
u_{n+1} = A_n u_n,\quad n \in \N_0,
$$
since all one-dimensional
subspaces rotate through the same angle.

Denote by $p_\ell$
the number of occurrences of $T_{\varphi_1}$ in $(A_n)_{0\le n\le \ell}$.
One observes for $n \in
\N$ that
$$
p_{2^{2n-1}-1} = \frac 13 (4^n-1) = p_{2^{2n}-1}
$$
and
$$
\lim_{n\to \infty} \frac 1{2^{2n-1}-1} p_{2^{2n-1}-1} = \frac 23,\quad
\frac 1{2^{2n}-1} p_{2^{2n}-1} = \frac 13.
$$
Thus, we obtain
$$
\underaccent{\bar}\theta_1 = \underaccent{\hat}\theta_1 = \frac 23
\varphi_0 + \frac 13 \varphi_1,\quad
\bar \theta_1 = \hat \theta_1 = \frac 13 \varphi_0 + \frac 23
\varphi_1.
$$
For each $n\in\N$, we find infinitely many indices $\nu \in \N$ such
that $A_{\nu+\ell} = T_{\varphi_0}$ (resp.\ $A_{\nu+\ell} = T_{\varphi_1}$) for all $\ell =
0,\dots,n-1$. As a consequence, the Diagram \ref{eq2.4} has the
explicit form in Diagram \ref{D1}.
\begin{diagram}
\begin{equation*}\label{diag1}
    \begin{matrix}
    \varphi_0 = \underaccent{\hat} \theta_{[1]} & < & \frac 23
    \varphi_0 + \frac 13 \varphi_1 = \underaccent{\hat}
    \theta_1 & < & \frac 13 \varphi_0 + \frac 23 \varphi_1 = \hat
    \theta_1 & < & \hat \theta_{[1]}= \varphi_1\\
    \rg& & \rg && \rg && \rg\\
    \varphi_0 = \underaccent{\bar} \theta_{[1]} & < & \frac 23
    \varphi_0 + \frac 13 \varphi_1 = \underaccent{\bar}
    \theta_1 & < & \frac 13 \varphi_0 + \frac 23 \varphi_1 = \bar
    \theta_1 & < & \bar \theta_{[1]} = \varphi_1
    \end{matrix}
\end{equation*}
\caption{Angular values of Example \ref{example1}. \label{D1}}
\end{diagram}
\end{example}

Although inner and outer angular values coincide for Example \ref{example1}, this
coincidence is in general not true.  We discuss the following
example.
\begin{example}\label{example2}
  Let
  $$
  C :=
  \begin{pmatrix}
  1 & 0\\ 0 & \frac 12
\end{pmatrix},\quad
R :=
\begin{pmatrix}
-1 & 0\\ 0 & 1
\end{pmatrix}.
$$
In the case of the reflection $R$, we observe for $v =
\begin{pmatrix}
\cos \phi\\\sin \phi
\end{pmatrix}$, $\phi \in [0,\frac \pi2]$ that
$$
\ang(v,Rv) =
\begin{cases}
  2\phi,& \text{ for } 0\le \phi \le \frac \pi 4,\\
  \pi - 2\phi,& \text{ for } \frac \pi 4< \phi \le \frac \pi 2
  \end{cases}
$$
and the maximal angle is achieved at $v\in
\mathrm{span}\left\{\left(\begin{smallmatrix}1\\1\end{smallmatrix}\right)\right\}$.

For $n\in\N_0$, we define
$$
A_n :=
\begin{cases}
R, & \text{ for } n \in \bigcup_{\ell = 1}^\infty [2\cdot 2^\ell -
4,3\cdot 2^\ell -5],\\
C,&\text{ otherwise.}
\end{cases}
$$
Table \ref{T2} illustrates this construction.

\begin{table}[hbt]
  \begin{center}
  \begin{tabular}{c|cccccccccccccccccccccc}
    $n$ & 0 & 1 & 2 & 3 & 4 & 5 & 6 & 7 & 8 & 9 & 10 & 11 & 12 & \dots
    & 19 & 20 \\\hline

   $A_n$ & $R$ & $R$ & $C$ & $C$ & $R$ & $R$ & $R$ & $R$ & $C$ & $C$ &
   $C$ & $C$ & $R$ & \dots & $R$ & $C$
  \end{tabular}
  \normalsize
\caption{Construction of $(A_n)_{n\in\N_0}$.\label{T2}}
\end{center}
\end{table}

We prove that inner and outer angular values of the nonautonomous
difference equation
$$
u_{n+1} = A_n u_n,\quad n\in\N_0\
$$
do not coincide. First we show that $\hat \theta_1 = 0$.
Let
$$
V_\phi := \mathrm{span}
\begin{pmatrix}
\cos \phi\\\sin \phi
\end{pmatrix},\quad
b_j(\phi) = \ang(\Phi(j-1,0) V_\phi,\Phi(j,0)V_\phi).
$$
For $\phi \in \{0,\frac \pi 2\}$ we get $b_j(\phi) = 0$ for all $j \in
\N$. In the case $\phi \in (0,\frac \pi 2)$ we observe that
$\Phi(j,0)V_\phi \to V_0$ as $j\to \infty$. Thus for each $\eps >0$
there exists an $N\in\N$ such that  $b_j(\phi) \le \eps$ for all $j\ge N$.
As a consequence we get for $n$ sufficiently large
\begin{align*}
  \frac 1n \sum_{j=1}^n b_j(\phi)
  & = \frac 1n \left( \sum_{j=1}^{N-1} b_j(\phi) + \sum_{j=N}^n
    b_j(\phi) \right)
   \le \frac 1n \left((N-1) \frac \pi 2 + (n+1 - N)\eps\right)
    \le 2\eps
\end{align*}
and this shows
$$
\hat \theta_1 = \sup_{V\in\cG(1,2)} \limsup_{n\to \infty} \frac 1n
\sum_{j=1}^n \ang(\Phi(j-1,0)V,\Phi(j,0)V) = 0.
$$
Similarly, all outer angular values are zero.

Next, we determine an estimate for the upper inner angular value. We
claim that $\bar \theta_1 \ge \frac \pi 6$.

For $\ell \in\N$ let $p_\ell := 3\cdot 2^\ell -5$. Note that the
matrix $C$ appears  $2^\ell-2$
times in $(A_n)_{0\le n\le p_\ell}$
and $R$ appears $2^\ell$ times in  $(A_n)_{p_{\ell-1}< n\le p_\ell}$.

Let $V(\ell) := \mathrm{span}\{ C^{-2^\ell
    +2} \left(\begin{smallmatrix}1\\1\end{smallmatrix}\right)\}$.
We obtain
\begin{align*}
  \bar \theta_1
  & = \limsup_{n\to \infty} \sup_{v\in \cG(1,2)} \frac 1n
    \sum_{j=1}^n \ang(\Phi(j-1,0)V,\Phi(j,0)V)\\
  & \ge \limsup_{\ell \to \infty} \frac 1{p_\ell+1}
    \sum_{j=1}^{p_\ell+1} \ang(\Phi(j-1,0)V(\ell),\Phi(j,0)V(\ell))\\
  & \ge \limsup_{\ell \to \infty} \frac 1{p_\ell+1} 2^\ell \frac \pi 2
    = \lim_{\ell \to \infty} \frac {2^\ell}{3\cdot 2^\ell -4}\cdot \frac \pi 2
    = \frac \pi 6.
\end{align*}

We obtain estimates for $\protect\underaccent{\bar}\theta_1$ by
  analyzing the subsequence $n=p_\ell-2^\ell+1$ for $\ell \in
  \N$. These indices detect the end of each block of $C$s. In
  particular, we observe that $\frac \pi{12} \le
  \protect\underaccent{\bar}\theta_1< \frac \pi 6$
and present results for all angular values in Diagram \ref{D2}.
\begin{diagram}
\begin{equation*}\label{diag2}
    \begin{matrix}
    0 = \underaccent{\hat} \theta_{[1]} & = & 0=\underaccent{\hat}
    \theta_1 & = & 0=\hat \theta_1 & = & \hat \theta_{[1]}=0\\
    \rg & & \rl && \rl && \rl\\
    0=\underaccent{\bar} \theta_{[1]} & < & \frac \pi{12} \le \underaccent{\bar}
    \theta_1 & < & \frac \pi 6 \le \bar \theta_1 & < & \bar
    \theta_{[1]} = \frac \pi 2
    \end{matrix}
\end{equation*}
\caption{Angular values of Example \ref{example2}. \label{D2}}
\end{diagram}
\end{example}

\section{Angular values of random linear cocycles}
\label{sec3}
Following \cite[Ch.3.3.1]{A1998} we consider a probability space
$(\Omega,\cF,\PP)$ and let $T:\Omega\circlearrowleft$ be a measurable,
$\PP$-preserving, ergodic transformation.
Let $A:\Omega\to \mathrm{GL}(\R^d)$ and set
$A^{(n)}_\omega=A(T^{n-1}\omega)\cdots A(T\omega)A(\omega)$.
Note that $A^{(n)}_\omega$ corresponds to a random solution
  operator $\Phi(n,0,\omega)$ in the setting of Section \ref{sec2}; cf.\
 \cite[(3.3.2)]{A1998}.
 In analogy to the right-hand side of \eqref{average}, for $n\ge 1$, define
 for $s\in\{1,\ldots,d\}$ and $V \in \mathcal{G}(s,d)$
\begin{equation*}
\label{randoma}
a_n(\omega,V)=\sum_{j=0}^{n-1}\ang(A_\omega^{(j)}V,A_\omega^{(j+1)}V).
\end{equation*}
Define a skew product $\tau:\Omega\times
\mathcal{G}(s,d)\circlearrowleft$ by
$\tau(\omega,V)=(T\omega,A(\omega)V)$ and
$f:\Omega\times\mathcal{G}(s,d)\to\R$ by
$f(\omega,V)=\ang(V,A(\omega)V)$.
One has the Birkhoff sum representation:
\begin{equation*}
\label{randomaskew}
a_n(\omega,V)=\sum_{j=0}^{n-1}f(\tau^j(\omega,V)).
\end{equation*}
The following result provides general conditions for the angular value
limits to be independent of the initial condition $\omega$ and
reference subspace $V$. {We use the notation
  $\vartheta_s$ for angular
values in this setting, to distinguish them from the angular values
$\theta_s$ in Section \ref{sec2}.
\begin{theorem} \label{thmbirkhoff}
Suppose $\tau$ preserves an ergodic probability measure $\mu$ on
$\Omega\times\cG(s,d)$, where $\mu$ has marginal $\PP$ on $\Omega$;
that is, $\mu(\cdot,\cG(s,d))=\PP$.\\
Then there is a $\vartheta_s\in [0,\frac \pi 2]$ satisfying
\begin{equation*}
\label{birkhoffav}
\vartheta_s=\lim_{n\to \infty}
\frac{1}{n}a_n(\omega,V)=\int_{\Omega\times\mathcal{G}(s,d)}
\ang(V,A(\omega)V)\ d\mu(\omega,V),
\end{equation*}
for $\mu$-almost every $(\omega,V)\in \Omega\times \mathcal{G}(s,d)$.
\end{theorem}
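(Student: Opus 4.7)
The plan is to recognize that the ergodic averages defining $\vartheta_s$ are precisely Birkhoff sums for the skew product $\tau$, so the theorem reduces to a direct application of Birkhoff's pointwise ergodic theorem to the observable $f$.

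First I would verify the hypotheses needed to apply Birkhoff's theorem to $f(\omega,V)=\ang(V,A(\omega)V)$ on the probability space $(\Omega\times\cG(s,d),\mu)$. Measurability of $f$ follows from the measurability of $\omega\mapsto A(\omega)$ together with the joint continuity of $(V,B)\mapsto \ang(V,BV)$ on $\cG(s,d)\times \mathrm{GL}(\R^d)$, which in turn is a consequence of Proposition \ref{prop2g} (the angle induces a metric on the Grassmannian) and the smoothness of the $\mathrm{GL}$-action. Boundedness $0\le f\le \tfrac{\pi}{2}$ is immediate from the definition of the maximal principal angle, hence $f\in L^1(\mu)$.

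Next, the Birkhoff sum identity
\[
 a_n(\omega,V)=\sum_{j=0}^{n-1} f(\tau^j(\omega,V))
\]
holds by inspection: one checks inductively that $\tau^j(\omega,V)=(T^j\omega,A_\omega^{(j)}V)$, so that
\[
f(\tau^j(\omega,V))=\ang\bigl(A_\omega^{(j)}V,\,A(T^j\omega)A_\omega^{(j)}V\bigr)
=\ang\bigl(A_\omega^{(j)}V,A_\omega^{(j+1)}V\bigr),
\]
which is the $j$-th summand in the definition of $a_n(\omega,V)$. Since $\mu$ is $\tau$-invariant and $\tau$ is $\mu$-ergodic by hypothesis, Birkhoff's pointwise ergodic theorem yields
\[
\lim_{n\to\infty}\frac{1}{n}a_n(\omega,V)=\int_{\Omega\times\cG(s,d)}f\,d\mu
\quad\text{for $\mu$-a.e.\ }(\omega,V).
\]
Setting $\vartheta_s:=\int f\,d\mu$ gives a constant in $[0,\tfrac{\pi}{2}]$ as the limit, which is the desired conclusion.

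There is no serious obstacle here since the statement is essentially a Birkhoff theorem wrapped in the angular values language; the only items requiring care are the measurability of $f$ and the bookkeeping identifying $a_n$ as a Birkhoff sum. The real work of the surrounding discussion (e.g.\ the subsequent Theorem \ref{thmRDS}) is to produce such ergodic lifts $\mu$ and to compare the resulting $\vartheta_s$ with the outer, inner, and uniform angular values from Definition \ref{defangularvalues}; the present theorem only asserts existence of the limit once such a lift is given.
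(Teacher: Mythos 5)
Your proposal is correct and follows exactly the route the paper takes: the paper sets up the Birkhoff sum identity $a_n(\omega,V)=\sum_{j=0}^{n-1}f(\tau^j(\omega,V))$ just before the theorem and then proves it in one line by invoking Birkhoff's ergodic theorem and ergodicity of $\tau$. Your additional checks (measurability and boundedness of $f$, the inductive computation of $\tau^j$) simply make explicit the routine verifications the paper leaves implicit.
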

\begin{proof}
This follows immediately from Birkhoff's ergodic theorem and ergodicity of $\tau$.
\end{proof}

The existence of an ergodic invariant measure $\mu$ for $\tau$ is
connected with a certain irreducibility condition on the action on
subspaces, leading to an independence of the angular values with
respect to $V$.
We would like to treat general linear cocycles and so
in analogy to Section \ref{sec2.1},
we consider angular values of a random linear cocycle w.r.t.\
$\omega \in \Omega$ and ask for extreme values w.r.t.\
$V \in \mathcal{G}(s,d)$.
To keep the analogy with Definition \ref{defangularvalues} we use
   an up/down bar for  upper/lower inner angular values
   and an up/down hat for upper/lower outer angular values, while their uniform
   equivalents are
  denoted by the bracketed index $[s]$.

\begin{theorem} \label{thmRDS}
 Let  $T:\Omega\circlearrowleft$ be a measurable,
$\PP$-preserving, ergodic transformation and let $A: \Omega \to \mathrm{GL}(\R^d)$.
Then the following assertions hold.

\begin{enumerate}
\item There is a  number
  $\bar{\vartheta}_s$ such that for $\PP$-a.e.\ $\omega$,
  \begin{equation}
    \label{part1eq}
    \bar{\vartheta}_s=\lim_{n \to \infty}
  \max_{V\in\mathcal{G}(s,d)} \frac{a_n(\omega,V)}{n} = \inf_{n\in \N}\frac{1}{n}\int_\Omega
  \max_{V\in\mathcal{G}(s,d)} a_n(\omega,V)\
  d\PP(\omega).
  \end{equation}
  In particular, one has
  \begin{equation*}
   {\bar \vartheta}_s\le \int_\Omega
    \max_{V\in\mathcal{G}(s,d)} \ang(V,A(\omega)V)\ d\PP(\omega).
    \end{equation*}
\item  There is a number
  $\hat{\vartheta}_s$ such
  that for $\PP$-a.e.\ $\omega$,
  $$\hat{\vartheta}_s=\sup_{V\in\mathcal{G}(s,d)}
  \limsup_{n \to \infty} \frac{a_n(\omega,V)}{n}.$$
  Furthermore, if for $\PP$-a.e.\ $\omega$ the supremum over $V$ is
  achieved by at most $K<\infty$ subspaces $V_1(\omega),\ldots,
  V_K(\omega)$, then one may create $K$
  equivariant collections $\{V_k(\omega)\}_{1\le k\le K,\omega\in\Omega}$,
  satisfying $V_k(T\omega)=A(\omega)V_k(\omega)$, $k=1,\ldots,K$.
\item There is a number
  $\underaccent{\hat}{\vartheta}_s$ such
  that  for $\PP$-a.e.\ $\omega$,
  $$\underaccent{\hat}{\vartheta}_s=\sup_{V\in\mathcal{G}(s,d)}
  \liminf_{n \to \infty} \frac{a_n(\omega,V)}{n}.$$
   Furthermore, if for $\PP$-a.e.\ $\omega$ the supremum over $V$ is
   achieved by at most $K<\infty$ subspaces $V_1(\omega),\ldots,
   V_K(\omega)$, then one may create $K$
   equivariant collections $\{V_k(\omega)\}_{1\le k\le K,\omega\in\Omega}$,
   satisfying $V_k(T\omega)=A(\omega)V_k(\omega)$, $k=1,\ldots,K$.
\item There is a number $\bar{\vartheta}_{[s]}$ such that
$$\bar{\vartheta}_{[s]}=\lim_{n \to \infty}
\sup_{V\in\mathcal{G}(s,d)} \esssup_{\omega \in \Omega}
\frac{a_n(\omega,V)}{n}=\inf_{n \to \infty} \esssup_{\omega \in
  \Omega}\max_{V\in\mathcal{G}(s,d)} \frac{a_n(\omega,V)}{n}.$$
In particular, one has
\begin{equation*}
  \bar{\vartheta}_{[s]}
  \le \esssup_{\omega \in \Omega} \max_{V\in\mathcal{G}(s,d)}\ang(V,A(\omega)V).
  \end{equation*}
\item There is a  number
  $\underaccent{\hat}{\vartheta}_{[s]}$
  such that
    $$\underaccent{\hat}{\vartheta}_{[s]}=\sup_{V\in\mathcal{G}(s,d)}\lim_{n\to\infty}\essinf_{\omega
    \in \Omega} \frac{a_n(\omega,V)}{n}=
  \sup_{V\in\mathcal{G}(s,d)}\sup_{n\ge
    1}\; \essinf_{\omega \in \Omega} \frac{a_n(\omega,V)}{n}.$$
  In particular, one has
  $$
  \sup_{V\in\mathcal{G}(s,d)} \essinf_{\omega \in
    \Omega}\ang(V,A(\omega)V) \le \underaccent{\hat}{\vartheta}_{[s]}.$$
  \item There is a  number
  $\hat{\vartheta}_{[s]}$
  such that
    $$
    \hat{\vartheta}_{[s]}= \sup_{V\in\mathcal{G}(s,d)}\lim_{n \to \infty}
    \esssup_{\omega \in \Omega} \frac{a_n(\omega,V)}{n} =
    \sup_{V\in\mathcal{G}(s,d)} \inf_{n\ge 1}
  \esssup_{\omega \in \Omega} \frac{a_n(\omega,V)}{n}.$$
  In particular, one has
  $$  \hat{\vartheta}_{[s]}\le \sup_{V\in\mathcal{G}(s,d)}
  \esssup_{\omega \in \Omega} \ang(V,A(\omega)V).$$
\end{enumerate}
\end{theorem}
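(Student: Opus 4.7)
The plan is to prove all six parts by combining three tools: the cocycle identity
\begin{equation*}
a_{n+m}(\omega,V)=a_n(\omega,V)+a_m(T^n\omega,A^{(n)}_\omega V),
\end{equation*}
obtained from $A^{(n+i)}_\omega=A^{(i)}_{T^n\omega}A^{(n)}_\omega$; Kingman's subadditive ergodic theorem for random sequences; and Fekete's subadditive lemma for deterministic ones. Ergodicity of $T$ is then used repeatedly to collapse $T$-invariant limits to constants.

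For Parts 1 and 4, set $g_n(\omega)=\max_{V\in\mathcal{G}(s,d)}a_n(\omega,V)$, which is measurable by continuity of $a_n(\omega,\cdot)$ on the compact Grassmannian. The cocycle identity and the inequality $\max(f+h)\le\max f+\max h$ yield $g_{n+m}(\omega)\le g_n(\omega)+g_m(T^n\omega)$. In Part 1, Kingman's theorem gives $g_n(\omega)/n\to\bar\vartheta_s$ $\PP$-a.s.\ with $\bar\vartheta_s=\inf_n\frac{1}{n}\int g_n\,d\PP$, a constant by ergodicity; the case $n=1$ yields the last estimate. In Part 4, put $F_n=\esssup_\omega g_n(\omega)$; the $T$-invariance of $\PP$ upgrades the pointwise subadditivity to $F_{n+m}\le F_n+F_m$, and Fekete gives $F_n/n\to\inf_n F_n/n=:\bar\vartheta_{[s]}$. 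The identity $\sup_V\esssup_\omega a_n=\esssup_\omega\max_V a_n$ is justified by continuity of $a_n(\omega,\cdot)$ on the separable space $\mathcal{G}(s,d)$ (restrict $\sup_V$ to a countable dense subset).

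For Parts 2 and 3, set $h(\omega)=\sup_V\limsup_n a_n(\omega,V)/n$ (and analogously with $\liminf$). A direct computation using $A^{(j)}_{T\omega}V=A^{(j+1)}_\omega A(\omega)^{-1}V$ produces the shift relation
\begin{equation*}
a_n(T\omega,V)=a_{n+1}(\omega,A(\omega)^{-1}V)-\ang(A(\omega)^{-1}V,V),
\end{equation*}
so $\limsup_n a_n(T\omega,V)/n=\limsup_n a_n(\omega,A(\omega)^{-1}V)/n$; since $A(\omega)$ permutes $\mathcal{G}(s,d)$, taking $\sup_V$ shows $h(T\omega)=h(\omega)$ $\PP$-a.s., and ergodicity forces $h\equiv\hat\vartheta_s$. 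For the equivariance assertion, the multifunction $\omega\mapsto\mathcal{M}(\omega)=\{V:\limsup_n a_n(\omega,V)/n=\hat\vartheta_s\}$ is measurable and almost surely has at most $K$ elements; a Kuratowski--Ryll-Nardzewski measurable selection produces measurable sections $V_1(\omega),\ldots,V_K(\omega)$, and the shift relation implies $A(\omega)$ restricts to a bijection $\mathcal{M}(\omega)\to\mathcal{M}(T\omega)$, so a measurable relabelling (matching) yields the desired $V_k(T\omega)=A(\omega)V_k(\omega)$.

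For Parts 5 and 6 the same decomposition is used, but with $\essinf_\omega$ or $\esssup_\omega$ in place of $\max_V$. The obstacle is that the moving subspace $A^{(n)}_\omega V$ blocks a clean sub/superadditivity for fixed $V$; the remedy is to bound $a_m(T^n\omega,A^{(n)}_\omega V)$ from above by $\max_{V'}a_m(T^n\omega,V')=g_m(T^n\omega)$ (for Part 6) and from below by $\min_{V'}a_m(T^n\omega,V')$ (for Part 5), transferring the $V$-dependence into uniform-in-$V$ envelopes. Combined with the $T$-invariance of $\PP$ these bounds deliver the sub/superadditive inequalities needed to apply Fekete and obtain $\lim=\inf$ (Part 6) and $\lim=\sup$ (Part 5); a final $\sup_V$ reproduces the asserted expressions for $\hat\vartheta_{[s]}$ and $\underaccent{\hat}\vartheta_{[s]}$, and the last estimates in each item come from the $n=1$ case. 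The hardest step is this last passage: tracking the cocycle action on subspaces precisely enough to recover clean sub/superadditivity while simultaneously identifying the two sides of each claimed equality; the measurable-selection matching in Parts 2--3 is delicate but standard.
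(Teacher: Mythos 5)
Your Parts 1--4 follow essentially the same route as the paper: Kingman's subadditive ergodic theorem applied to $g_n(\omega)=\max_{V\in\cG(s,d)}a_n(\omega,V)$ for Part 1, the shift identity $a_n(T\omega,A(\omega)V)=a_n(\omega,V)+O(1)$ plus ergodicity and a matching of the at most $K$ maximizers for Parts 2--3 (the paper is less formal than you about measurable selection, but the substance is identical), and Fekete together with the interchange of $\sup_V$ and $\esssup_\omega$ for Part 4.

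The gap is in Parts 5 and 6. The stated identities require, for each \emph{fixed} $V$, that $n\mapsto\essinf_\omega a_n(\omega,V)$ satisfy $\lim_n(\cdot)/n=\sup_n(\cdot)/n$ and that $n\mapsto\esssup_\omega a_n(\omega,V)$ satisfy $\lim_n(\cdot)/n=\inf_n(\cdot)/n$; only afterwards is $\sup_V$ taken. Your envelope bounds give only
$\esssup_\omega a_{n+m}(\omega,V)\le \esssup_\omega a_n(\omega,V)+\esssup_\omega\max_{V'}a_m(\omega,V')$ and
$\essinf_\omega a_{n+m}(\omega,V)\ge \essinf_\omega a_n(\omega,V)+\essinf_\omega\min_{V'}a_m(\omega,V')$.
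These are not subadditivity/superadditivity of the fixed-$V$ sequences: the correction term is a $V$-independent envelope that in general strictly dominates (resp.\ is dominated by) the corresponding fixed-$V$ quantity, so Fekete applied to these mixed inequalities says nothing about $\lim_n\esssup_\omega a_n(\omega,V)/n$ versus $\inf_n\esssup_\omega a_n(\omega,V)/n$ (and similarly for $\essinf$); taking $\sup_V$ at the end produces statements about $\esssup_\omega\max_{V'}a_m$ and $\essinf_\omega\min_{V'}a_m$, which are the objects of Parts 4 and of a min-version, not of Parts 5--6. Note also that the obstruction you identify is genuine and cannot be argued away: with $\Omega=\{0,1\}$, $T$ the swap, $V=\Span(e_1)$, $A(0)=\bigl(\begin{smallmatrix}1&0\\1&2\end{smallmatrix}\bigr)$, $A(1)=\bigl(\begin{smallmatrix}1&0\\-1&2\end{smallmatrix}\bigr)$, one checks $\essinf_\omega a_2(\omega,V)=\frac\pi4<\frac\pi2=2\,\essinf_\omega a_1(\omega,V)$, so fixed-$V$ superadditivity fails because the subspace has moved to $A^{(1)}_\omega V$. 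For comparison, the paper's own proof of Parts 5--6 simply asserts that $\essinf_\omega a_n(\omega,V)$ is superadditive (and the $\esssup$ analogue subadditive) in $n$ for fixed $V$ and invokes Fekete, without addressing the moving subspace; your proposal neither establishes that claim nor supplies a substitute argument for the asserted equalities, so Parts 5--6 remain unproven in your write-up.
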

\begin{remark}
In Part 1 of Theorem \ref{thmRDS}, if $\Omega$ is a metric space,
$\omega\mapsto A(\omega)$ is continuous, and
$T:\Omega\circlearrowleft$ is uniquely ergodic, then using the fact
that $\omega\mapsto\max_V \ang(V,A(\omega)V)$ is continuous, we obtain
the following stronger conclusion.
By Theorem 1.5 \cite{stst00}, the limit in  \eqref{part1eq}
converges in a semi-uniform way (the limit exists for
\emph{all} $\omega\in\Omega$ and converges uniformly in $\omega$):
given $\epsilon>0$, there exists $n_0$ such that for all $n\ge n_0$,
$$
{\bar \vartheta}_s\le \max_{V\in\mathcal{G}(s,d)}
\frac{a_n(\omega,V)}{n}\le
{\bar \vartheta}_s+\epsilon\quad\mbox{ for all
  $\omega\in\Omega$.}
$$
A simple example of such a system is an irrational rotation on the
unit circle $\Omega=S^1$ and
$T\omega=\omega+\varphi$, where $\varphi\notin \Q$. The Lebesgue measure
on $S^1$ is the unique invariant probability measure. One may choose
\emph{any} continuous matrix-valued function $A$.
More generally, one may consider rationally independent
  translations on higher-dimensional tori.
\end{remark}

\begin{proof}[Proof of Theorem \ref{thmRDS}]
\
\begin{enumerate}
\item We note that the invertibility of the matrices $A(\omega)$
  implies $A(\omega)\mathcal{G}(s,d)=\mathcal{G}(s,d)$ for $\PP$ a.e.\
  $\omega$.
  As in the proof of Lemma \ref{lem2.2} one can easily show that
  \begin{align*}\max_{V\in\cG(s,d)} a_{n+m}(\omega,V)\le
    \max_{V\in\cG(s,d)} a_{n}(\omega,V)+\max_{V\in\cG(s,d)}
    a_{m}(T^n\omega,V)
  \end{align*}
  for every $n,m\ge 0$, and therefore
  $g_n(\omega):=\max_{V\in\cG(s,d)}  a_{n}(\omega,V)$ is a subadditive
  sequence of functions. Recall that
  $0\le a_n(\omega,V)\le \frac\pi 2$ for all $\omega, V$, implying $0 \le
  g_n\le \frac \pi 2$.
  The results are now immediate by the subadditive ergodic theorem
  applied to $g_n$, using ergodicity of $\PP$.
\item By direct computation, one verifies that
  \begin{align*}
    a_n(T\omega,A(\omega)V)=a_n(\omega,V)-\ang(V,A(\omega)V)+\ang(A^{(n)}_\omega
    V,A^{(n+1)}_\omega V).
    \end{align*}
  Thus, because values of angles are bounded, one has
  \begin{equation}
  \label{equi}
    \limsup_{n\to \infty}\frac 1n
    a_n(T\omega,A(\omega)V)=\limsup_{n\to\infty} \frac 1n a_n(\omega,V)=:g(\omega,V),
  \end{equation}
  where $g$ is measurable in $\omega$ and $V$.
      By the invertibility of $A(\omega)$ for a.e.\ $\omega$, we see that
      \begin{align*}
        h(\omega)&:=\sup_{V\in\cG(s,d)} g(\omega,V)
        =\sup_{V\in\cG(s,d)} \limsup_{n\to\infty}\frac 1n
        a_n(T\omega,A(\omega)V)\\
        &=\sup_{V\in\cG(s,d)}
        \limsup_{n\to\infty} \frac 1n a_n(T\omega,V)=h(T\omega).
      \end{align*}
          By ergodicity, the $T$-invariant function $h$ is constant a.e.\
  The expression \eqref{equi} demonstrates equivariance of a
  particular maximizing $V$ (if it exists).
  By hypothesis for $\PP$-a.e.\ $\omega$ there are at most $K$
  distinct subspaces $\tilde{V}$ satisfying
  $\sup_{V\in\mathcal{G}(s,d)}g(\omega,V)= g(\omega,\tilde{V})$.
  By ergodicity, the invertibility of the $A(\omega)$, and the
  equivariance property, the number of solutions must be independent
  of $\omega$ on a full $\mathbb{P}$-measure set; let us call this
  number $K$.
  The equivariance property allows us to ``match'' the $K$ pointwise
  solutions to create $K$ families of maximizing subspaces
  $V_1(\omega),\ldots, V_K(\omega)$ obeying equivariance.
\item This proof is analogous to Part 2.
\item Since $V\mapsto \frac 1n a_n(\omega,V)$ is continuous for each
 $n\in \N$ and $\PP$-a.e.\ $\omega$, and $\mathcal{G}(s,d)$ is compact,
  we may replace the $\max_{V\in\cG(s,d)}$ with $\sup_{V\in\cG(s,d)}$ in all statements
  of Part 4.
   One may now interchange the operations
  $\esssup_\omega$ and $\sup_{V\in\cG(s,d)}$.
 Similarly to the proof of Part 1, one shows that $\sup_{V\in\cG(s,d)}
 \esssup_\omega a_{n}(\omega,V)$ is a subadditive sequence. Then the
 results  follow immediately from
 Fekete's subadditive lemma.
  \item We note that for fixed $V$ we obtain a superadditive sequence of
  numbers $g_n(V):=\essinf_\omega a_n(\omega,V)$.
     By Fekete's superadditive lemma one has $\lim_{n\to\infty} g_n(V)$
     exists and equals $\sup_{n\in\N} g_n(V)$.  This proves all
     statements concerning
     $\underaccent{\hat}\vartheta_{[s]}$.
\item
     The results for $\hat{\vartheta}_{[s]}$ follow
     similarly, replacing superadditivity with subadditivity.
\end{enumerate}
\end{proof}
In the following we compare the various angular values
as in Diagram \ref{eq2.4}. First note that we have a limit in equation
\eqref{part1eq}. Therefore, it is unnecessary to distinguish between
upper and lower angular values $\bar{\vartheta}_s$,
$\underaccent{\bar}{\vartheta}_s$ as in Diagram \ref{eq2.4} for the
nonautonomous case.
To complete the following diagram,
we introduce the lower uniform
inner angular value
 \begin{equation*}
   \underaccent{\bar}{\vartheta}_{[s]}= \liminf_{n \to \infty}
   \sup_{V\in\cG(s,d)} \essinf_{\omega \in \Omega} \frac{a_n(\omega,V)}{n},
 \end{equation*}
 which does not appear in Theorem \ref{thmRDS}.

\begin{lemma} \label{diagramrandom}
  Let the assumptions of Theorem \ref{thmRDS} hold. Then the angular values
  defined above are related by Diagram \ref{D4}.
  \begin{diagram}
 \begin{equation*}\label{ineqRDS}
    \begin{matrix}
    \underaccent{\hat} \vartheta_{[s]} & \le & \underaccent{\hat}
    \vartheta_s & \le & \hat \vartheta_s & \le & \hat \vartheta_{[s]}\\
    \rle& & \rle && \rle && \rle\\
    \underaccent{\bar} \vartheta_{[s]} & \le & \bar\vartheta_s
    & = & \bar{\vartheta}_s & \le & \bar \vartheta_{[s]}\\
    \end{matrix}
     \end{equation*}
 \caption{Comparison of angular values for random dynamical systems.
 \label{D4}}
  \end{diagram}
\end{lemma}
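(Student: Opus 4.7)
The plan is to derive all inequalities in Diagram \ref{D4} as elementary manipulations of the order of the operators $\sup_V$, $\essinf_\omega$, $\esssup_\omega$, $\liminf_n$, $\limsup_n$, $\lim_n$, using the limit representations already established in Theorem \ref{thmRDS}. The central equality in the bottom row (written as $\bar\vartheta_s = \bar\vartheta_s$ and which we read as $\underaccent{\bar}\vartheta_s = \bar\vartheta_s$) is immediate from Part 1 of Theorem \ref{thmRDS}: the $n$-step maximum $\max_V a_n(\omega,V)/n$ converges (not merely $\limsup$s), so upper and lower variants of the inner angular value coincide.

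For the horizontal inequalities in the top row I would argue as follows. Starting from the pointwise bound $\essinf_\omega \tfrac{a_n(\omega,V)}{n} \le \tfrac{a_n(\omega,V)}{n}$ (valid $\PP$-a.e.\ $\omega$, for each fixed $V$ and $n$), take $\liminf_n$ on both sides; by Part 5 the left side already is a genuine limit. Taking $\sup_V$ on the left and observing that the right side, together with $\sup_V$, equals $\underaccent{\hat}\vartheta_s$ by Part 3, yields $\underaccent{\hat}\vartheta_{[s]} \le \underaccent{\hat}\vartheta_s$. The middle step $\underaccent{\hat}\vartheta_s \le \hat\vartheta_s$ is $\liminf \le \limsup$ followed by $\sup_V$. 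For $\hat\vartheta_s \le \hat\vartheta_{[s]}$, use $\tfrac{a_n(\omega,V)}{n} \le \esssup_\omega \tfrac{a_n(\omega,V)}{n}$, take $\limsup_n$, invoke that the resulting sequence in $n$ actually converges by Part 6, and finish with $\sup_V$. The bottom row is analogous: $\underaccent{\bar}\vartheta_{[s]} \le \bar\vartheta_s$ comes from $\sup_V \essinf_\omega \tfrac{a_n}{n} \le \essinf_\omega \max_V \tfrac{a_n}{n} \le \max_V \tfrac{a_n(\omega,V)}{n}$ a.e., followed by $\liminf_n$ and Part 1. The opposite side $\bar\vartheta_s \le \bar\vartheta_{[s]}$ uses the representation $\bar\vartheta_s = \inf_n \tfrac1n \int \max_V a_n\,d\PP$ from Part 1, which is dominated by $\inf_n \esssup_\omega \max_V \tfrac{a_n}{n}$, and this equals $\bar\vartheta_{[s]}$ by the interchange of $\sup_V$ and $\esssup_\omega$ together with the Fekete identity in Part 4.

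For the four vertical inequalities the pattern is uniform: fix $V$, dominate the outer ("hat") expression in $V$ by the corresponding inner ("bar") expression that contains an extra $\sup_V$ inside the limit, take the appropriate $n$-limit, and finally take $\sup_V$. For instance, $\underaccent{\hat}\vartheta_{[s]} \le \underaccent{\bar}\vartheta_{[s]}$ follows because for fixed $V$ one has $\essinf_\omega \tfrac{a_n(\omega,V)}{n} \le \sup_{V'}\essinf_\omega \tfrac{a_n(\omega,V')}{n}$; applying $\liminf_n$ and then $\sup_V$ gives the claim. The two middle vertical bounds $\underaccent{\hat}\vartheta_s \le \bar\vartheta_s$ and $\hat\vartheta_s \le \bar\vartheta_s$ both reduce, via $\tfrac{a_n(\omega,V)}{n} \le \max_V \tfrac{a_n(\omega,V)}{n}$ and Part 1, to replacing a $\liminf_n$ or $\limsup_n$ by the $\PP$-a.e.\ limit $\bar\vartheta_s$. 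Finally $\hat\vartheta_{[s]} \le \bar\vartheta_{[s]}$ is obtained from $\esssup_\omega \tfrac{a_n(\omega,V)}{n} \le \sup_{V'}\esssup_\omega \tfrac{a_n(\omega,V')}{n}$, followed by $\lim_n$ and $\sup_V$.

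The only delicate point is the bookkeeping: each step relies on a $\PP$-a.e.\ statement coming from a different part of Theorem \ref{thmRDS}, and the combined argument must respect a common full-measure set on which every limit representation is simultaneously valid. This is harmless since only countably many exceptional sets enter, but it is the place where one must be careful. Once this is organized and the Fekete-type limit identities of Parts 1, 4, 5, 6 are invoked at the appropriate moments, each of the nine inequalities in Diagram \ref{D4} is a three-line manipulation and no new idea is required.
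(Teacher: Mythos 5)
Your overall route is the paper's own: the paper literally proves this lemma by saying it is "similar to the proof of Lemma \ref{lem2.2}", i.e.\ by the same elementary reshuffling of $\sup_V$, $\essinf_\omega$, $\esssup_\omega$, $\liminf_n$, $\limsup_n$ together with the limit representations of Theorem \ref{thmRDS}, and most of your arrows are indeed sound. In particular, the arrows where the supremum over $V$ is applied to $\omega$-independent constants (e.g.\ $\underaccent{\hat}\vartheta_{[s]}\le\underaccent{\hat}\vartheta_s$, $\underaccent{\hat}\vartheta_{[s]}\le\underaccent{\bar}\vartheta_{[s]}$, $\hat\vartheta_{[s]}\le\bar\vartheta_{[s]}$, $\bar\vartheta_s\le\bar\vartheta_{[s]}$), and the arrows that only use the inequality $\frac{a_n(\omega,V)}{n}\le\max_{V'}\frac{a_n(\omega,V')}{n}$, which holds for \emph{every} $\omega$ and every $V$ (e.g.\ $\underaccent{\hat}\vartheta_s\le\bar\vartheta_s$, $\hat\vartheta_s\le\bar\vartheta_s$, $\underaccent{\bar}\vartheta_{[s]}\le\bar\vartheta_s$), go through exactly as you say, with only countably many null sets (indexed by $n$) to intersect.

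There is, however, one arrow where your justification does not work as stated, namely $\hat\vartheta_s\le\hat\vartheta_{[s]}$, and it is precisely the place where your closing remark "only countably many exceptional sets enter" fails. For fixed $V$ the inequality $\frac{a_n(\omega,V)}{n}\le\esssup_{\omega'}\frac{a_n(\omega',V)}{n}$ holds only off a null set $N_{V,n}$, so after intersecting over $n$ you obtain $\limsup_n\frac{a_n(\omega,V)}{n}\le\lim_n\esssup_{\omega'}\frac{a_n(\omega',V)}{n}\le\hat\vartheta_{[s]}$ only for $\omega\notin N_V$, where $N_V$ depends on $V$. To "finish with $\sup_V$" you need this for \emph{all} $V\in\cG(s,d)$ at one common $\omega$ in the full-measure set where $\sup_V\limsup_n\frac{a_n(\omega,V)}{n}=\hat\vartheta_s$, but $\cG(s,d)$ is uncountable and $\bigcup_V N_V$ need not be null; in effect you are using an interchange of the type $\esssup_\omega\sup_V\le\sup_V\esssup_\omega$, which is false for general measurable families (think of a maximizing subspace $V(\omega)$ with atomless distribution: each $V$-section of the bad set is null while its $\omega$-projection has full measure). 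Note that in the deterministic Lemma \ref{lem2.2} the corresponding arrow $\hat\theta_s\le\hat\theta_{[s]}$ is trivial because $a_{1,n}(V)\le\sup_k a_{k+1,k+n}(V)$ holds pointwise for the same $V$, so the analogy breaks exactly here. Closing this arrow genuinely requires extra input beyond quantifier bookkeeping — for instance exploiting that $V\mapsto a_n(\omega,V)$ is continuous for fixed $n$, joint measurability and a Fubini argument with a full-support measure on $\cG(s,d)$, or the stationarity/equivariance structure used in Part 2 of Theorem \ref{thmRDS} — and your proposal should either supply such an argument or at least flag this single inequality as the one that is not a "three-line manipulation".
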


\begin{proof}
 The proof of the inequalities in Diagram \ref{D4} is similar to the
 proof of Lemma \ref{lem2.2}.
\end{proof}
In the special case where $\Omega$ consists of a single point, we are
in the autonomous setting with a single matrix $A$.
We may apply the results of Theorem \ref{thmRDS} Parts 1--3 to obtain
the following corollary.
\begin{corollary} \label{corauto}
Let $a_n(V)=\sum_{j=0}^{n-1}\ang(A^jV,A^{j+1}V)$ with $A\in
\mathrm{GL}(\R^d)$ and $V \in \mathcal{G}(s,d)$. Then the following
holds:
\begin{enumerate}
\item The limit
$$
{\bar \vartheta}_s:=\lim_{n \to \infty} \max_{V\in\cG(s,d)}
  \frac{a_n(V)}{n}\mbox{ exists and equals }\inf_{n\in
    \N}\max_{V\in\cG(s,d)} \frac{a_n(V)}{n}.
$$
    In particular, one has
$$
\bar \vartheta_s\le  \max_{V\in\cG(s,d)} \ang(V,AV).
$$
\item There is a number $\hat{\vartheta}_s$ such
  that  $$\hat{\vartheta}_s=\sup_{V\in\mathcal{G}(s,d)}
  \limsup_{n \to \infty} \frac{a_n(V)}{n}.$$
  Furthermore, if the supremum over $V$ is achieved by a subspace $V$
  then the supremum is also achieved by $A^jV$ for all $j\in
  \Z$.
\item There is a number $\underaccent{\hat}{\vartheta}_s$ such
  that  $$\underaccent{\hat}{\vartheta}_s=\sup_{V\in\mathcal{G}(s,d)}
  \liminf_{n \to \infty} \frac{a_n(V)}{n}.$$
   Furthermore, if the supremum over $V$ is achieved by a subspace $V$
   then the supremum is also achieved by $A^jV$ for all $j\in
   \Z$.
\end{enumerate}
\end{corollary}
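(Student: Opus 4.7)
The plan is to specialize Theorem \ref{thmRDS} to the trivial probability space consisting of a single atom, with $T$ the identity and the obvious Dirac measure (which is automatically ergodic). Under this reduction, the random cocycle collapses to iterates of the single matrix $A$, the $\esssup$ and $\essinf$ over $\omega$ disappear, and $a_n(\omega, V)$ reduces to $a_n(V) = \sum_{j=0}^{n-1}\ang(A^j V, A^{j+1} V)$. In this way, Parts 1--3 of the corollary correspond exactly to Parts 1--3 of Theorem \ref{thmRDS}.

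For Part 1, I would invoke Theorem \ref{thmRDS}, Part 1, after checking that the sequence $g_n := \max_{V \in \cG(s,d)} a_n(V)$ is subadditive: from $a_{n+m}(V) = a_n(V) + a_m(A^n V)$, taking the maximum over $V$ on both sides yields $g_{n+m} \le g_n + g_m$. Since $0 \le g_n \le n\pi/2$, Fekete's subadditive lemma delivers $\lim_n g_n/n = \inf_n g_n/n$, which is the claimed identity. The inequality $\bar\vartheta_s \le \max_V \ang(V, AV)$ then comes from setting $n=1$ in the infimum, since $g_1 = \max_V \ang(V, AV)$.

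For Parts 2 and 3, the mere existence of $\hat\vartheta_s$ and $\underaccent{\hat}\vartheta_s$ is immediate from Theorem \ref{thmRDS}, Parts 2 and 3. The orbit-equivariance claim — that whenever $V$ achieves the outer supremum, so does $A^j V$ for every $j \in \Z$ — would be established using the identity already appearing in the proof of Theorem \ref{thmRDS},
\begin{equation*}
a_n(AV) = a_n(V) - \ang(V, AV) + \ang(A^n V, A^{n+1} V).
\end{equation*}
Since the two correction terms are each bounded by $\pi/2$, division by $n$ and passage to the limit shows that $\frac{1}{n}a_n(AV)$ and $\frac{1}{n}a_n(V)$ share the same $\limsup$ and the same $\liminf$. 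Hence $AV$ is a maximizer whenever $V$ is, and invertibility of $A$ (so that $A^{-1}V$ is also a subspace of dimension $s$) propagates this to $A^j V$ for every $j \in \Z$.

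The only subtlety — essentially the main point to get right — is that the equivariance statement here does \emph{not} require the ``at most $K$ maximizers'' hypothesis of Theorem \ref{thmRDS}: we are making a purely pointwise claim about the orbit of the maximizing set under $A$, rather than trying to select measurable equivariant families across a nontrivial probability space, so the elementary identity above is enough and no measurable-selection argument is needed.
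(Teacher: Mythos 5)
Your proposal is correct and follows essentially the same route as the paper: the corollary is obtained exactly by specializing Theorem \ref{thmRDS}, Parts 1--3, to the one-point probability space (the subadditive ergodic theorem reducing to Fekete's lemma for Part 1, and the shift identity $a_n(AV)=a_n(V)-\ang(V,AV)+\ang(A^nV,A^{n+1}V)$ giving the orbit equivariance of maximizers). Your remark that the pointwise equivariance needs no ``at most $K$ maximizers'' hypothesis is also consistent with the paper, where that hypothesis is only used to build measurable equivariant families in the genuinely random setting.
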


Finally, to contrast the random setting with the nonautonomous setting,
let us reexamine the nonautonomous Examples \ref{example1} and
\ref{example2}. There we found $\underaccent{\bar}\theta_1 < \bar \theta_1$
for the nonautonomous inner angular values in Diagrams \ref{D1} and \ref{D2}.
However, such a distinction is unnecessary in the random setting,
see Diagram \ref{D4}.
Therefore, the $0,1$ sequences underlying the choice
of matrices in Tables \ref{T1} and \ref{T2}
cannot occur for a set of full measure with an ergodic measure-preserving map.


\section{Angular values for the autonomous case}
\label{sec5}
A linear dynamical system, generated by  a single matrix $A \in
\mathrm{GL}(\R^d)$, fits into both -- the nonautonomous setting of
Section \ref{sec2} and the random setting of Section \ref{sec3}.
Therefore, the various angular values $\theta_s$ from Section
\ref{sec2} and $\vartheta_s$ from Section \ref{sec3} coincide.
Even for this case the computation of angular values turns out to be
nontrivial.
Since we will vary the matrix $A$, we write $\theta_s(A)$ to indicate
the dependence of the angular values on the matrix.

The following Corollary collects some equalities in Diagram
\ref{eq2.4} for autonomous systems.
\begin{corollary} \label{propauto}
  For an invertible autonomous system the following equalities hold
  for the values from Definition \ref{defangularvalues}
  \begin{equation} \label{coincideauto}
  \underaccent{\bar}{\theta}_{s}(A)= \bar{\theta}_{s}(A) =
  \bar{\theta}_{[s]}(A) =
\lim_{n \rightarrow \infty} \frac{1}{n} \sup_{V \in \mathcal{G}(s,d)}
                    \sum_{j=1}^n \ang(A^{j-1}V, A^j V).
      \end{equation}
  \end{corollary}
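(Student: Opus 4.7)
The plan is to exploit the shift-invariance inherent to the autonomous case, together with Lemma \ref{lem2.2}, to collapse the right-hand column of Diagram \ref{eq2.4}.

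First I would observe the basic identity arising from autonomy: since $\Phi(j,0)=A^j$, we have
\begin{equation*}
a_{k+1,k+n}(V)=\sum_{j=k+1}^{k+n}\ang(A^{j-1}V,A^jV)=\sum_{i=1}^{n}\ang(A^{i-1}(A^kV),A^i(A^kV))=a_{1,n}(A^kV),
\end{equation*}
for every $k\in\N_0$, $n\in\N$ and $V\in\cG(s,d)$. Because $A$ is invertible, the map $V\mapsto A^kV$ is a bijection of $\cG(s,d)$ for each fixed $k$, so $\sup_{V\in\cG(s,d)} a_{1,n}(A^kV)=\sup_{W\in\cG(s,d)} a_{1,n}(W)$.

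The main step is to identify $\bar\theta_{[s]}(A)$ with $\bar\theta_s(A)$. Using the identity above,
\begin{equation*}
\sup_{V\in\cG(s,d)}\sup_{k\in\N_0} a_{k+1,k+n}(V)=\sup_{V\in\cG(s,d)}\sup_{k\in\N_0} a_{1,n}(A^kV)=\sup_{W\in\cG(s,d)} a_{1,n}(W),
\end{equation*}
so dividing by $n$ and taking the appropriate limit gives $\bar\theta_{[s]}(A)=\bar\theta_s(A)$. By Lemma \ref{lem2.2}, the $\limsup$ in the definition of $\bar\theta_{[s]}$ is actually a limit; hence the same is true of $\bar\theta_s(A)$, i.e.\
\begin{equation*}
\lim_{n\to\infty}\frac{1}{n}\sup_{V\in\cG(s,d)} a_{1,n}(V)
\end{equation*}
exists. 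Since this limit exists, $\limsup$ and $\liminf$ in its definition agree, yielding $\underaccent{\bar}\theta_s(A)=\bar\theta_s(A)$.

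Putting the three equalities together gives \eqref{coincideauto}. The main (and only nontrivial) obstacle is verifying the shift-invariance identity $\sup_V\sup_k a_{k+1,k+n}(V)=\sup_V a_{1,n}(V)$, which is immediate once one notes both the autonomy of the cocycle and that $A^k$ acts as a bijection of $\cG(s,d)$ for every $k$; everything else is a clean appeal to Lemma \ref{lem2.2} and the Fekete/subadditivity result already proved there.
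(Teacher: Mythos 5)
Your argument is correct. The key identity $a_{k+1,k+n}(V)=a_{1,n}(A^kV)$, combined with the observation that $A^k$ is a bijection of $\mathcal{G}(s,d)$, does give $\sup_{V}\sup_{k}a_{k+1,k+n}(V)=\sup_{V}a_{1,n}(V)$ for every $n$, so $\bar\theta_{[s]}(A)=\bar\theta_s(A)$ term by term, and the Fekete-based part of Lemma \ref{lem2.2} (the $\limsup$ in the definition of $\bar\theta_{[s]}$ is a limit) then forces $\lim_{n\to\infty}\frac{1}{n}\sup_V a_{1,n}(V)$ to exist, whence $\underaccent{\bar}\theta_s(A)=\bar\theta_s(A)$ as well. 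The paper's stated proof is organized differently: it first invokes Part 1 of Corollary \ref{corauto} (the subadditive ergodic theorem for the random setting, specialized to a one-point $\Omega$) to get existence of the limit and its identification with $\bar\theta_s(A)$, and it obtains $\bar\theta_s(A)=\bar\theta_{[s]}(A)$ from Part 4 of Theorem \ref{thmRDS}; however, it then remarks that this last equality ``also follows from Definition \ref{defangularvalues} and the identity $a_{k+1,k+n}(V)=a_{1,n}(A^kV)$,'' and that existence of the limit can be derived directly from Fekete's lemma. You have in effect carried out this second, self-contained route in full detail (including the bijection step, which the paper leaves implicit), so your proof avoids any appeal to the random-dynamical-systems machinery of Section \ref{sec3} and rests only on Lemma \ref{lem2.2}; the paper's primary route buys consistency with the more general Theorem \ref{thmRDS}, while yours is the more elementary of the two sanctioned arguments.
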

\begin{proof}
  The existence of the limit on the RHS of \eqref{coincideauto} is due
  to Part 1 of Corollary \ref{corauto}. Its existence can also be
  derived directly from Fekete's subadditive lemma.
  The fact that $\bar{\theta}_{s}(A)$ is equal to the limit on the RHS
  of \eqref{coincideauto} is by definition in Part 1 of
  \mbox{Corollary \ref{corauto}.}
  The equality $\underaccent{\bar}{\theta}_{s}(A)=
  \bar{\theta}_{s}(A)$ is trivial because we are discussing limits
  rather than limsup or liminf.
  The equality $\bar{\theta}_{s}(A) =
  \bar{\theta}_{[s]}(A)$ follows from Part $4$ of Theorem \ref{thmRDS},
    since $\Omega$ consists of a single point.
 It also follows from
    Definition \ref{defangularvalues} and the identity
    \begin{align*}
      a_{k+1,k+n}(V) = \sum_{j=k+1}^{k+n}\ang(A^{j-1}V,A^jV) =
      \sum_{\nu=1}^n \ang(A^{\nu-1}A^kV,A^{\nu}A^kV)= a_{1,n}(A^kV).
    \end{align*}
    \end{proof}

Next, we determine some explicit formulas for angular values in
the autonomous case.
Proposition \ref{prop2}(i) shows that we can assume $A$ to be in real Schur form
(cf.\ \cite[Theorem 2.3.4]{HJ2013}), i.e.\ $A$ is quasi-upper triangular
\begin{equation} \label{eq2.8}
  A = \begin{pmatrix}
    \Lambda_1 & A_{12} & \cdots & A_{1k} \\
    0 & \Lambda_2 & \cdots & A_{2k} \\
    \vdots & \ddots & \ddots & \vdots \\
    0 & \cdots & 0 & \Lambda_k
  \end{pmatrix}, \quad \Lambda_i \in \R^{d_i,d_i}, A_{ij}\in \R^{d_i,d_j},
\end{equation}
where either $d_i=1$  and $\Lambda_i=\lambda_i \in \R$ is a real eigenvalue or
$d_i=2$ and
\begin{equation} \label{eq2.9}
  \Lambda_i = \begin{pmatrix} \mathrm{Re}(\lambda_i) & -\frac{1}{\rho_i}
    \mathrm{Im}(\lambda_i) \\
    \rho_i \mathrm{Im}(\lambda_i) & \mathrm{Re}(\lambda_i)
  \end{pmatrix}, \quad 0<\rho_i \le 1, \quad
\end{equation}
for a complex eigenvalue $\lambda_i \in \C \setminus \R$.

\subsection{The two-dimensional case}
\label{sec5a}
Later on we use \eqref{eq2.8} to reduce the computation of
angular values to those of diagonal blocks. Therefore, we look at $2
\times 2$-matrices
first and compute $\bar{\theta}_1(A)$ in terms of the spectrum $\sigma(A)$.
This is already a nontrivial task.
Consider $A \in \R^{2,2}$ with complex conjugate eigenvalues
$\lambda,\bar{\lambda}$, $\mathrm{Im}(\lambda) >0$ and set
$\varphi=\mathrm{arg}(\lambda)$
where $\lambda=|\lambda| \mathrm{exp}(i \varphi), 0 < \varphi < \pi$.
By  orthogonal similarity transformations and a scaling with $|\lambda|^{-1}$
one can put $A$ into the normal form (see \eqref{eq2.9})
\begin{equation} \label{normalform}
  A(\rho,\varphi)= \begin{pmatrix} \cos(\varphi) & - \rho^{-1} \sin(\varphi)\\
    \rho \sin(\varphi) & \cos(\varphi) \end{pmatrix},  \quad
  0<\rho \le 1,\quad 0< \varphi<\pi.
\end{equation}
According to Proposition \ref{prop2}(i) these are the transformations which
leave all angular values invariant. Further, the matrix
$A(\rho,\varphi)$ leaves the ellipse $x^2 + \rho^{-2}y^2=1$ invariant, so
that $\rho \le 1$ can be achieved by a permutation.

Finally, we introduce the skewness of a matrix $A\in \mathrm{GL}(\R^d)$ by
\begin{equation*} \label{defskew}
  \mathrm{skew}(A) = \frac{1}{2 r(A)}\|A-A^{\top}\|, \quad r(A) = \max\{
  |\lambda|: \lambda \in \sigma(A) \}
  \end{equation*}
and note that this quantity is also invariant under scalings and orthogonal
similarity transformations. For the matrix \eqref{normalform} we have
$\mathrm{skew}(A(\rho,\varphi))=\frac{1}{2}(\rho + \rho^{-1})|\sin(\varphi)|$.

\begin{proposition} \label{lem2.3}
  For a matrix $A \in \mathrm{GL}(\R^2)$ all first
    angular values $\theta_1(A)$ with $\theta_1 \in \{
  \underaccent{\hat}\theta_{[1]}, \underaccent{\hat}\theta_{1},\hat
\theta_1,\hat\theta_{[1]},\underaccent{\bar}\theta_{[1]},
\underaccent{\bar}\theta_{1},\bar \theta_1,\bar\theta_{[1]} \}$
coincide. Moreover, the following holds:
  \begin{enumerate}
    \item[(a)]    If $A$ has only real eigenvalues then
        \begin{equation*} \label{eq2.10a}
          \theta_1(A)=
          \begin{cases}
      \frac{\pi}{2}, & \text{if} \; \sigma(A)=\{-\lambda,\lambda\}\subset
        \R, \lambda>0,\\
  0, & \text{otherwise.}
    \end{cases}
        \end{equation*}
      \item[(b)] If $\sigma(A)
        = \{\lambda,\bar{\lambda}\}, \mathrm{Im}(\lambda) \neq 0$ then
        \begin{equation} \label{complexgeneral}
         {\theta}_1(A)
          \le  \min(|\mathrm{arg}(\lambda)|,
          \pi-|\mathrm{arg}(\lambda)|).
        \end{equation}
        If additionally, $\mathrm{skew}(A) \le 1$ then we have equality, i.e.\
        \begin{equation} \label{skew}
         \theta_1(A)=  \min(|\mathrm{arg}(\lambda)|,
          \pi-|\mathrm{arg}(\lambda)|).
        \end{equation}
        \end{enumerate}
\end{proposition}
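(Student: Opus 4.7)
The plan is to exploit the invariance of all angular values under orthogonal similarity and scaling from Proposition \ref{prop2}(i) to put $A$ into a canonical form: the real Schur form $A = \bigl(\begin{smallmatrix}\lambda_1 & c \\ 0 & \lambda_2\end{smallmatrix}\bigr)$ in case (a), and the normal form $A(\rho,\varphi) = D_\rho T_\varphi D_\rho^{-1}$ with $D_\rho = \mathrm{diag}(1,\rho)$ in case (b). The coincidence of all eight angular values will be deduced from Proposition \ref{propequi}, and the explicit values computed in each case.

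For case (a) I split into two subcases. When $\sigma(A) = \{\lambda,-\lambda\}$ with $\lambda>0$, direct multiplication gives $A^2 = \lambda^2 I$, so for every $V$ the orbit $\{A^jV\}$ has period two, the sequence $b_j(V) = \ang(V,AV)$ is constant in $j$, and $\tfrac{1}{n}a_{k+1,k+n}(V) = \ang(V,AV)$ for every $n,k$. Hence every angular value equals $\sup_V \ang(V,AV)$; the quadratic form $v^\top Av = \lambda x^2 + cxy - \lambda y^2$ has positive discriminant $c^2 + 4\lambda^2$, so its real zeros realize subspaces with $\ang(V,AV) = \pi/2$, giving every angular value as $\pi/2$. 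Otherwise $\lambda_1+\lambda_2\ne 0$; parametrizing $\cG(1,2)$ by the angular coordinate $\phi\in[0,\pi)$ shows the induced map on $\R P^1$ is a Möbius contraction toward the dominant eigendirection (with a Jordan-block variant when $\lambda_1=\lambda_2$). Monotone convergence (positive eigenvalue ratio) or geometrically decaying oscillation (negative ratio) then yields a uniform estimate $\sum_{j=1}^\infty b_j(V) \le C$ in $V\in\cG(1,2)$, and shift-invariance $\sup_k a_{k+1,k+n}(V) = \sup_k a_{1,n}(A^kV) \le C$ forces every angular value in Diagram \ref{eq2.4} to be $0$ via Proposition \ref{propequi}(a).

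For case (b), let $F\colon\R\to\R$ denote the standard lift of the orientation-preserving circle homeomorphism induced by $A$ on $\R P^1 = \R/\pi\Z$, and write $\delta_j(V) = F(\phi_{j-1})-\phi_{j-1}\in(0,\pi)$ for the signed angular increment; then the principal angle satisfies
\begin{equation*}
  b_j(V) = \min\bigl(\delta_j(V),\pi-\delta_j(V)\bigr) = \tfrac{\pi}{2} - \bigl|\delta_j(V) - \tfrac{\pi}{2}\bigr|.
\end{equation*}
The conjugacy $A = D_\rho T_\varphi D_\rho^{-1}$ makes the induced dynamics on $\R P^1$ topologically conjugate to the rigid rotation by $\varphi$; consequently the rotation number is $\varphi$ and $\tfrac{1}{n}\sum_{j=1}^n \delta_j(V)\to\varphi$ for every $V$. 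Applying Jensen's inequality to the convex function $|\cdot|$ and passing $n\to\infty$ yields
\begin{equation*}
  \limsup_n \tfrac{1}{n} a_{1,n}(V) \le \tfrac{\pi}{2} - \bigl|\varphi - \tfrac{\pi}{2}\bigr| = \min(\varphi,\pi-\varphi),
\end{equation*}
which is \eqref{complexgeneral}. For the equality \eqref{skew} under $\mathrm{skew}(A)\le 1$, the equality case of Jensen requires $\delta(\phi)-\pi/2$ to have constant sign on some orbit; a direct calculation of $\delta(\phi)$ (using $\delta(0) = \arctan(\rho\tan\varphi)$, $\delta(\pi/2) = \tfrac{\pi}{2}-\arctan(\rho\cot\varphi)$, and their analogues for $\varphi>\pi/2$) together with the hypothesis $\tfrac{1}{2}(\rho+\rho^{-1})|\sin\varphi|\le 1$ yields that $\delta(\phi)\le\pi/2$ (for $\varphi\le\pi/2$) or $\delta(\phi)\ge\pi/2$ (for $\varphi>\pi/2$) uniformly, so $b_j(V) = \delta_j(V)$ or $\pi - \delta_j(V)$ throughout, and the Birkhoff limit is exactly $\min(\varphi,\pi-\varphi)$. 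Finally, the equality of all eight angular values in case (b) follows from Proposition \ref{propequi}(b): the sequence $b_j(V) = g(h(V)+(j-1)\varphi)$ with $g$ continuous on $\R/\pi\Z$ and $h$ the conjugacy is uniformly almost periodic in $j$ by Weyl equidistribution (irrational $\varphi/\pi$) or exact periodicity (rational $\varphi/\pi$).

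The main obstacle I anticipate is the sign analysis of $\delta(\phi)-\pi/2$ in case (b), since verifying that the skewness bound $\tfrac{1}{2}(\rho+\rho^{-1})|\sin\varphi|\le 1$ precisely captures the one-sided condition demanded by Jensen requires locating and classifying the interior critical points of $\delta(\phi)$, an explicit trigonometric optimization. A secondary difficulty arises in the rational case $\varphi/\pi\in\Q$, where the Birkhoff average must be replaced by a maximum over cyclic sums taken over the finitely many periodic orbits of the conjugate rigid rotation, and one must verify that an orbit realizing the value $\min(\varphi,\pi-\varphi)$ persists under the skewness hypothesis.
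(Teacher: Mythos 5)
Your overall strategy is sound and, for part (b), runs parallel to the paper's proof: the same normal form $A(\rho,\varphi)=D_\rho T_\varphi D_\rho^{-1}$, the same conjugacy of the projective dynamics to the rigid rotation by $\varphi$ (the paper's $\theta_j=\Psi_\rho(j\varphi+\Psi_{\rho^{-1}}(\theta_0))$), and the same route to the coincidence of all eight values via uniform almost periodicity of $b_j(V)=G(h(V)+(j-1)\varphi)$ and Proposition \ref{propequi}(b). Your Jensen/triangle-inequality step $\frac1n\sum_j b_j\le \frac\pi2-\bigl|\frac1n\sum_j\delta_j-\frac\pi2\bigr|$ is a nice shortcut: it yields \eqref{complexgeneral} for both $\varphi\le\frac\pi2$ and $\varphi>\frac\pi2$ at once, whereas the paper telescopes the increments (see \eqref{ethetaless}) and disposes of $\varphi>\frac\pi2$ separately through the orthogonal similarity $A(\rho,\varphi)\sim-A(\rho,\pi-\varphi)$. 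In part (a) you genuinely deviate: the paper treats $|\lambda_1|\neq|\lambda_2|$ with the Blocking Lemma \ref{lemblock} (plus the monotone analysis of $\Gamma_\eta$ for the Jordan case), while you argue directly on $\R P^1$ that the total angular variation $\sum_j b_j(V)$ is bounded uniformly in $V$, which via $a_{k+1,k+n}(V)=a_{1,n}(A^kV)$ kills $\bar\theta_{[1]}$ and hence all entries of Diagram \ref{eq2.4} in one stroke. That is a legitimate and in some ways cleaner route; note only that in the negative-ratio subcase the phrase ``geometrically decaying oscillation'' hides the real point: during the initial phase the orbit is near the \emph{repelling} direction and the summands are small not because the oscillation has decayed but because consecutive lines straddle that direction, so the principal angle is $\pi-(\alpha_{j-1}+\alpha_j)=(\frac\pi2-\alpha_{j-1})+(\frac\pi2-\alpha_j)$, a geometrically growing but still summable quantity; this wrap-around estimate (plus the $O(1)$ crossover terms) is what makes the constant uniform in $V$ and should be written out. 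Your first subcase ($A^2=\lambda^2 I$, indefinite form $v^\top Av$) is correct and matches the paper's reflection argument.

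The one substantive step you have not supplied is exactly the one you flag: showing that $\mathrm{skew}(A)\le1$ forces $\delta(\phi)\le\frac\pi2$ for \emph{all} $\phi$ (resp.\ $\ge\frac\pi2$ when $\varphi>\frac\pi2$). Checking $\delta(0)$ and $\delta(\frac\pi2)$ is not enough, since the maximum of $\delta$ is attained at an interior point; the paper carries out precisely this optimization, reducing $F_{\rho,\varphi}(\theta)\le\theta+\frac\pi2$ to $\sin(2\theta)\le\beta(\rho,\varphi)$ with $\beta$ as in \eqref{defbeta}, and $\beta\ge1$ is exactly $\mathrm{skew}(A)\le1$ — so your plan does go through, but this computation is the heart of \eqref{skew} and must be done. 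By contrast, your ``secondary difficulty'' about rational $\varphi/\pi$ is moot: once $\delta\le\frac\pi2$ holds everywhere, $b_j=\delta_j$ along \emph{every} orbit and the average tends to $\varphi$ by the telescoping/rotation-number argument regardless of rationality; the rational/irrational dichotomy only becomes relevant when $\mathrm{skew}(A)>1$, which is the content of Theorem \ref{prop5.1}, not of this proposition.
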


\begin{proof}
  By Proposition \ref{prop2}(i) we can assume $A$ to be in Schur form and scale
  $A$ such that the
  largest eigenvalue has (absolute) value $1$.
  Further we mention that $\bar{\theta}_1(A)=0$
    causes all other angular values to vanish by Corollary \ref{propauto}
    and Lemma \ref{lem2.2}.

  For $A=I$ the result is trivial and we are left with the cases
  \begin{equation} \label{Acases}
      A = \begin{cases} \begin{pmatrix} 1 & \eta \\ 0 & \lambda \end{pmatrix},
        & \begin{array}{ll} 0< |\lambda| < 1, \lambda\in \R, \eta \ge 0,
        & \text{case (i)}\\
     \lambda=1, \eta>0, & \text{case (ii)} \\
     \lambda=-1, \eta \ge 0, & \text{case (iii)}
     \end{array} \\
    A(\rho,\varphi), & \
    0<\rho \le 1, 0< \varphi <\pi, \hspace{0.97cm} \text{case (iv)}.
    \end{cases}
  \end{equation}
  It suffices to consider spaces $V=\mathrm{span}(v_0)$ where
  $v_0= \begin{pmatrix} \cos(\theta_0) \\ \sin(\theta_0) \end{pmatrix}$ and
  $|\theta_0| \le \frac{\pi}{2}$. We write the iterates in polar coordinates
  \begin{align} \label{polar}
    v_j = A^j v_0, \quad v_j =r_j
    \begin{pmatrix} \cos(\theta_j) \\ \sin(\theta_j)
  \end{pmatrix},
  \end{align}
    where $r_j = \|v_j\|$ and the angles $\theta_j\in \R$ will be determined
    appropriately. If $|\theta_j-\theta_{j-1}| \le \pi$ one finds
    that the angle between successive spaces is
    \begin{equation} \label{angvector}
      \ang(\mathrm{span}(v_{j-1}),\mathrm{span}(v_j)) = \chi(\theta_j-
      \theta_{j-1}), \quad \chi(x) := \min ( |x|, \pi-|x|).
    \end{equation}
    In the following we study the matrices from \eqref{Acases} case by case.
    \begin{itemize}
    \item[(i)]
      Since $|\lambda|<1$ the Blocking Lemma \ref{lemblock} below applies
      and reduces the formula to the one-dimensional case, i.e.\
      $\bar{\theta}_1(A)= \max(\bar{\theta}_1(1),\bar{\theta}_1(\lambda))=0$
      and similarly for $ \underaccent{\bar}{\theta}_1, \hat{\theta}_1,
  \underaccent{\hat}{\theta}_1$.
      Nevertheless, for later use and for the purpose of illustration
      we discuss the simple subcase $\eta=0<\lambda$ explicitly.
      In this case we obtain $|\theta_j| \le \frac{\pi}{2}$ for all $j \in \N$
      and the following formula
    \begin{equation} \label{angleiterate}
      \theta_j = \Psi_{\lambda}(\theta_{j-1}),
        \quad \Psi_{\lambda}(\theta)=
        \begin{cases}
           \arctan(\lambda \tan(\theta)), &
            |\theta| <  \frac{\pi}{2}, \\
                        \theta, & |\theta|= \frac{\pi}{2},
        \end{cases}
    \end{equation}
    cf.\ Figure \ref{PsiGamma}.
    For $\lambda>0$ we have $\Psi_{\lambda}'(\theta)>0$
    for all $|\theta|\le \frac{\pi}{2}$,
   $0< \Psi_{\lambda}(\theta)  <  \theta$
    for $\theta \in  (0,\frac{\pi}{2})$, and
    $0> \Psi_{\lambda}(\theta) > \theta$ for
    $\theta \in     (-\frac{\pi}{2},0)$.
    The values $\theta_j$ are monotone decreasing resp.\ increasing
    if $\theta_0>0$ resp.\ $\theta_0<0$, and therefore
    \begin{align} \label{esta1n}
     a_{1,n}= \sum_{j=1}^n
      \ang(\mathrm{span}(v_{j-1}),\mathrm{span}(v_j))= | \sum_{j=1}^n
      (\theta_{j-1}-\theta_j)|
      = |\theta_0 - \theta_n| \le \frac{\pi}{2}.
    \end{align}
    The assertion then follows from Proposition \ref{propequi} (a)
    with $\varphi=0$.
\begin{figure}[hbt]
    \begin{center}
      \includegraphics[width=0.75\textwidth]{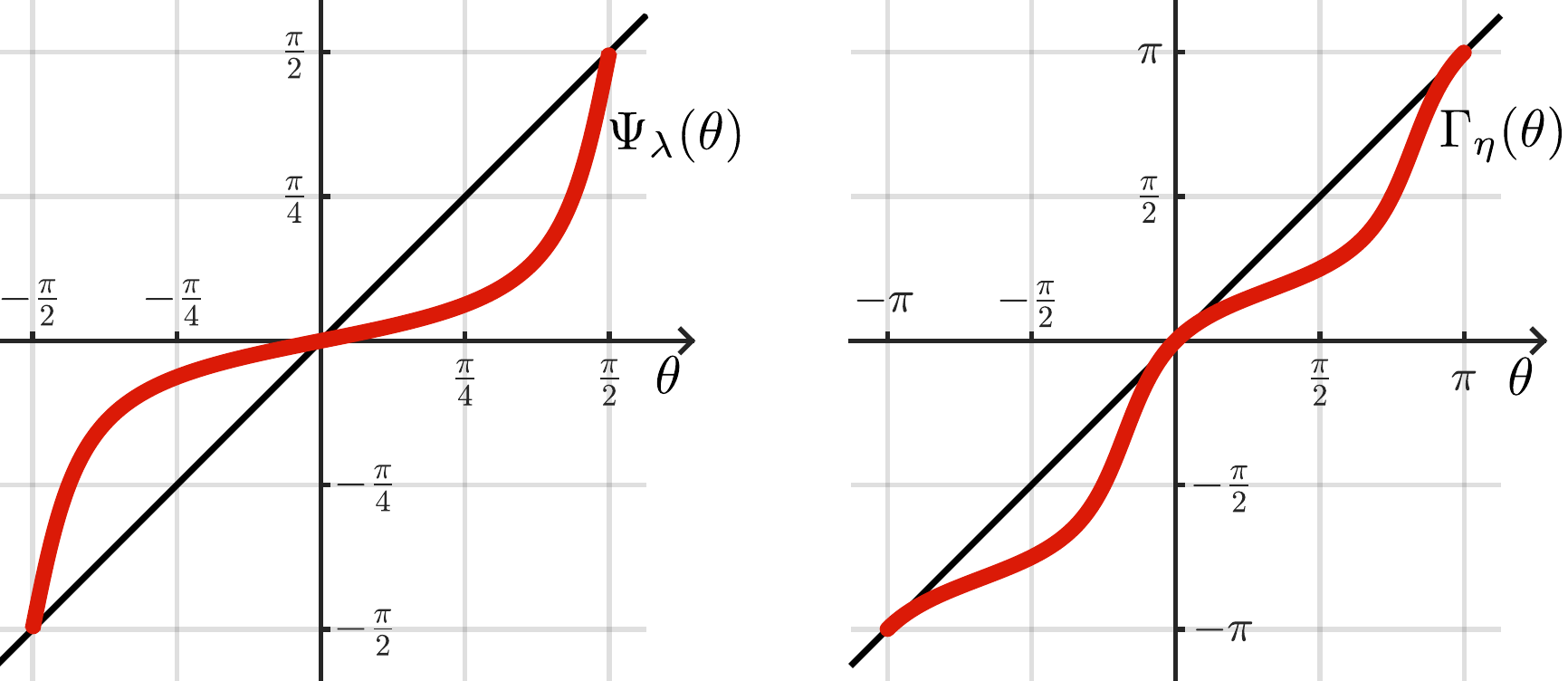}
    \end{center}
\caption{\label{PsiGamma} Graphs of $\Psi_\lambda$ for $\lambda = 0.2$
and of $\Gamma_\eta$ for $\eta = 1$.}
\end{figure}

\item[(ii)]
    For the matrix in \eqref{Acases} (ii) we obtain
    \begin{equation} \label{Jordaniterate}
      \theta_j = \Gamma_{\eta}(\theta_{j-1}),
        \quad \Gamma_{\eta}(\theta)=
        \begin{cases}
           \mathrm{arccot}(\eta+ \cot(\theta)), &
           0 < \theta < \pi, \\
           \mathrm{arccot}_{-1}(\eta+ \cot(\theta)), &-\pi< \theta < 0,\\
           \theta, & \theta=-\pi,0,\pi,
        \end{cases}
    \end{equation}
    where $\mathrm{arccot}_{-1}$ is the first negative branch of
    $\mathrm{arccot}$, see Figure \ref{PsiGamma}.
    The function $\Gamma_{\eta}$  is strictly monotone increasing and satisfies
    $\Gamma_{\eta}(\theta)< \theta$ for $0 < |\theta|< \pi$. Therefore,
    the sequence $\theta_j$
    is monotone decreasing and converges to $0$ if $0\le \theta_0< \pi$ and to
    $-\pi$ if $\theta_0<0$. Thus the minimum in
    \eqref{angvector} is achieved at $|\theta_j - \theta_{j-1}|$  and we
    obtain as in \eqref{esta1n}
    \begin{align*}
      a_{1,n}= \sum_{j=1}^n \ang(\mathrm{span}(v_{j-1}),\mathrm{span}(v_j))\le
      | \sum_{j=1}^n (\theta_{j-1}-\theta_j)|
      = |\theta_0 - \theta_n| \le \pi.
      \end{align*}
    \item[(iii)]
      The third case describes a reflection which satisfies $A^2=I$.
      Moreover, we find
        \begin{align*}
  v_0^{\top}Av_0= \cos(2 \theta_0)- \eta \sin(2 \theta_0)
        \end{align*}
        which vanishes for $\theta_0=\frac{\pi}{4}$ if $\eta=0$, and otherwise for
        \begin{align*}
          \theta_0 =\frac{1}{2} \arctan(\eta^{-1}) \in \big(0,\frac{\pi}{4}\big).
        \end{align*}
        Then we have $\ang(v_0,Av_0)=\frac{\pi}{2}=\ang(A^jv_0,A^{j-1}v_0)$
        for all $j \ge 1$. Since $\frac{\pi}{2}$ is the maximum possible
        angular value our assertion is proved.
    A reflection turns out to have the same angular value as a
    rotation by $\frac{\pi}{2}$.
  \item[(iv)]
    In \eqref{Acases} we can assume
    $\varphi \le \frac{\pi}{2}$ since $A(\rho,\varphi)$ is orthogonally
    similar to $- A(\rho,\pi - \varphi)$.
   For this rotational case we use ergodic theory
    and employ almost periodicity; see \cite[Ch.4.1, Remarks 1.3-1.7]{P89}.
  We extend the function $ \Psi_{\rho}$ defined in \eqref{angleiterate}
  from $[-\frac{\pi}{2},\frac{\pi}{2}]$ to $\R$ by setting
  \begin{align} \label{liftPsi}
    \Psi_{\rho}(\theta + n\pi) = \Psi_{\rho}(\theta)+ n \pi, \quad
    |\theta| \le \frac{\pi}{2}, n  \in \Z \setminus \{0\}.
  \end{align}
  For this extended function there exists a constant $C_{\rho}>0$ such that
  \begin{equation}\label{estPsi}
    |\Psi_{\rho}(x)- x |, |\Psi_{\rho}'(x)|, |\Psi_{\rho}''(x)| \le C_{\rho}
    \quad \text{for all} \quad x \in \R.
  \end{equation}
    The factorization
  \begin{align*}
    \begin{pmatrix} \cos(\varphi) & - \rho^{-1} \sin(\varphi)\\
      \rho \sin(\varphi) & \cos(\varphi) \end{pmatrix}=
\begin{pmatrix} 1& 0 \\ 0 & \rho \end{pmatrix}
    \begin{pmatrix} \cos(\varphi) & -  \sin(\varphi)\\
      \sin(\varphi) & \cos(\varphi) \end{pmatrix}
    \begin{pmatrix} 1 & 0 \\ 0 & \rho^{-1} \end{pmatrix}
  \end{align*}
    shows that the angles $\theta_j\in \R$ in \eqref{polar} are accumulated according to
  \begin{align} \label{itertheta}
    \theta_j =F_{\rho,\varphi}(\theta_{j-1}), j \in \N, \quad
    F_{\rho,\varphi}(\theta):=\Psi_{\rho}(\varphi + \Psi_{\rho^{-1}}(\theta)).
  \end{align}
  The new variables $\varphi_j = \varphi + \Psi_{\rho^{-1}}(\theta_j)$
  then satisfy the recursion
  \begin{align*}
    \varphi_j= \varphi + \Psi_{\rho^{-1}}(\Psi_{\rho}(\varphi +
    \Psi_{\rho^{-1}}(\theta_{j-1})))= \varphi+ \varphi_{j-1},
  \end{align*}
  hence $\varphi_j = \varphi_0+ j \varphi= (j+1)\varphi + \Psi_{\rho^{-1}}(\theta_0)$ and
  \begin{equation} \label{thetaformula}
    \theta_j = \Psi_{\rho}(j\varphi + \Psi_{\rho^{-1}}(\theta_0)).
  \end{equation}
  In particular, the values $\theta_j$ are monotone increasing.
  From \eqref{thetaformula} and \eqref{estPsi} we infer
\begin{equation} \label{ethetaless}
  \begin{aligned}
    \frac{1}{n} a_{1,n} & = \frac{1}{n} \sum_{j=1}^n \chi(\theta_{j-1}-\theta_j)
    \le \frac{1}{n} \sum_{j=1}^n (\theta_j-\theta_{j-1})
     = \frac{1}{n}(\theta_n- \theta_0) \\ & = \varphi +
    \frac{1}{n}\big( \Psi_{\rho}(\varphi_n- \varphi)-(\varphi_n- \varphi)
    + \Psi_{\rho^{-1}}(\theta_0)- \theta_0 \big)\\
    & \le \varphi + \frac{C_{\rho}+C_{\rho^{-1}}}{n}.
   \end{aligned}
 \end{equation}
This will lead to the estimate \eqref{complexgeneral} as $n \to
\infty$ provided we have
  shown the equality of all angular values. For this purpose we apply
  Proposition \ref{propequi}(b) where we identify $V \in
    \cG(1,2)$ with $\theta+ 2 \pi \Z \in S^{2 \pi}=\R/(2 \pi \Z)$ via $V=
\mathrm{span}\begin{pmatrix} \cos(\theta) \\
  \sin(\theta) \end{pmatrix}$.
The function $\Psi_{\rho}$
  is a lift of the circle map $\psi_{\rho}: S^{2 \pi}\to S^{2 \pi}$
  defined by $\psi_{\rho}(\theta + 2 \pi \Z)= \Psi_{\rho}(\theta) + 2 \pi \Z$.
  Further, the iteration \eqref{itertheta} may be written by means of  a circle
  map $T_{\rho,\varphi}:S^{2 \pi} \to S^{2\pi}$
  as follows
  \begin{align} \label{Tconj}
    \theta_j+2 \pi \Z =T_{\rho,\varphi}(\theta_{j-1}+ 2 \pi \Z), \quad
    T_{\rho,\varphi}= \psi_{\rho}\circ \tau_{\varphi} \circ
    \psi_{\rho}^{-1},
  \end{align}
  where the shift
  $\tau{_\varphi}:S^{2 \pi} \to S^{2\pi}$ is defined by $
  \tau_{\varphi}(\theta+2 \pi \Z)= \theta+\varphi+ 2 \pi \Z$. The map
  $F_{\rho,\varphi}$ in \eqref{itertheta}
  is then a lift of $T_{\rho,\varphi}$.
  It is well known (see  \cite[Ch.2.6.2]{Ba12}) that
  $\tau_{\varphi}$ is an ergodic isometry of $S^{2 \pi}$ with respect
  to
  Lebesgue measure $\mu_1$ and the standard metric
  \begin{align*}
    d_1(\theta_1+ 2 \pi \Z, \theta_2 + 2 \pi \Z)= \min_{z \in \Z}|\theta_1 -
    \theta_2 + 2 \pi z|
  \end{align*}
  if and only if $\frac{\varphi}{\pi} \notin \Q$. In this case the
  conjugacy \eqref{Tconj} implies that $T_{\rho,\varphi}$ is an ergodic isometry
  of $S^{2 \pi}$ with respect to the image measure $\mu_{\rho}=\mu_1 \circ
  \psi_{\rho}^{-1}$ and the image metric
  $ d_{\rho}(\cdot, \cdot) = d_1(\psi_{\rho}^{-1} \cdot,\psi_{\rho}^{-1} \cdot)$.
  We conclude from \cite[Remark 1.3]{P89} that the map $T_{\rho,\varphi}$ is uniformly
  almost periodic, i.e.\ for every $\varepsilon>0$ there exists a
  relatively dense set $\cP \subseteq \N_0$ such that
  $d_{\rho}(x,T_{\rho,\varphi}^px) \le \varepsilon$ for all $x \in
  S^{2 \pi}$, $p \in \cP$. Moreover, for
  any continuous function $g:S^{2 \pi}\to \R$ the sequence of functions
  $b_n(x)=g(T_{\rho,\varphi}^{n-1}x),
  x \in S^{2 \pi}$, $n \in \N$ is uniformly almost periodic in the sense
  of Definition \ref{unifap}. To see this, let $\eps_0 >0$ be given
  and take $\eps>0$
  such that $|g(x_1)-g(x_2)| \le \eps_0$ whenever $d_{\rho}(x_1,x_2) \le \eps$,
  $x_1,x_2 \in S^{2 \pi}$. For the relatively dense set $\cP \subset \N$
  belonging to $\eps$ we then find
  \begin{align*}
    |b_n(x)-b_{n+p}(x)|=|g(T_{\rho,\varphi}^{n-1}x)-
    g(T_{\rho,\varphi}^p(T_{\rho,\varphi}^{n-1}x))| \le \eps_0 \quad
    \forall
    n \in \N, p \in \cP, x \in S^{2 \pi}.
  \end{align*}
  In the case $\frac{\varphi}{\pi}\in \Q$ we have the same result since
  then every point $x \in S^{2 \pi}$ has the same period $q$ where
  $\frac{\varphi}{\pi}=\frac{2 p}{q}$.

  Let us apply this to the continuous function
  \begin{equation} \label{chooseg}
    g(x) =
    \min(d_1(x,T_{\rho,\varphi}x),d_1(\tau_{\pi}x,T_{\rho,\varphi}x)),
    \quad x \in S^{ 2\pi}.
  \end{equation}
  Setting $x=\theta_0+ 2 \pi \Z$ we obtain
  \begin{align*}
    T_{\rho,\varphi}^{j-1}x & = \theta_{j-1}+ 2 \pi \Z ,  \quad j \in \N.
  \end{align*}
  Using $\theta_{j-1} < \theta_j \le \theta_{j-1}+\pi$ and \eqref{angvector}
  for $j \in \N$ then  leads to
  \begin{equation} \label{bjexpress}
  \begin{aligned}
    b_j(x) & = g(T_{\rho,\varphi}^{j-1}x) = \min(\theta_j - \theta_{j-1}, \theta_{j-1}+ \pi -
    \theta_j)\\
    & = \chi(\theta_j - \theta_{j-1})= \ang(\mathrm{span}(v_{j-1}),\mathrm{span}(v_j)).
  \end{aligned}
  \end{equation}
  Therefore, all angular values agree by Proposition \ref{propequi} (b).

  Next we show
  that the assumption $\mathrm{skew}(A)= \frac{1}{2}(\rho + \rho^{-1})
  |\sin(\varphi)| \le 1 $ implies $\theta_j-\theta_{j-1} \le \frac{\pi}{2}$. Then
  the minimum in \eqref{angvector} is always achieved with the first term and
  the first inequality in \eqref{ethetaless} becomes an equality. Thus we find
  \begin{align*}
    |\frac{1}{n} a_{1,n} - \varphi| \le \frac{C_{\rho}+C_{\rho^{-1}}}{n},
  \end{align*}
  and Proposition \ref{propequi}(a) implies the assertion.
  It remains to analyze the inequality
  \begin{align} \label{lesspi/2}
   F_{\rho,\varphi}(\theta)= \Psi_{\rho}(\varphi+
    \Psi_{\rho^{-1}}(\theta)) \le \theta + \frac{\pi}{2},
    \quad \theta \in \R.
  \end{align}
  For later purposes we perform a rather explicit calculation.
  First note that it is enough to consider $0<|\theta| < \frac{\pi}{2}$
  since $F_{\rho,\varphi}(\theta+n \pi)=F_{\rho,\varphi}(\theta)+n \pi$ holds
  by \eqref{liftPsi}  and since \eqref{lesspi/2}
  is obvious for $\theta=0,\pm \frac{\pi}{2}$.
   By the monotonicity of $\Psi_{\rho^{-1}}$ and the
   sum formula\footnote{$\arctan(x)+\arctan(y)= \mathrm{sgn}(x)\pi -
     \arctan(\frac{x+y}{xy-1}$) for $x\neq 0,xy>1$.} for $\arctan$ we obtain
   that $F_{\rho,\varphi}(\theta) \le \theta + \frac{\pi}{2} $ holds
   for $0< |\theta| < \frac{\pi}{2}$ if and only if
   \begin{equation*}
     \begin{aligned}
        \varphi & \le \Psi_{\rho^{-1}}(\theta+ \frac{\pi}{2}) -
        \Psi_{\rho^{-1}}(\theta) = \begin{cases} r(\theta,\rho), & 0<\theta < \frac{\pi}{2}, \\
          \pi + r(\theta,\rho), & -\frac{\pi}{2}< \theta <0,
        \end{cases} \\
        r(\theta,\rho) & := \arctan\Big(\frac{\tan(\theta)+
          \frac{1}{\tan(\theta)}}{\rho^{-1}-\rho}\Big) =
        \arctan\Big(\frac{2}{(\rho^{-1}- \rho) \sin(2 \theta)} \Big).
     \end{aligned}
   \end{equation*}
   In the case $\theta <0$ this inequality always holds since $\varphi \le
   \frac{\pi}{2}$, while for $\theta>0$ it is equivalent to
   \begin{equation} \label{equivangle}
     \sin(2 \theta) \le \frac{2}{\tan(\varphi)(\rho^{-1}-\rho)}=: \beta(\rho,\varphi).
     \end{equation}
   Expressing $\tan(\varphi)$ in terms of $\sin(\varphi)=\frac{2\,
     \mathrm{skew}(A)}{\rho^{-1}+ \rho}$ leads to
   \begin{equation} \label{defbeta}
     \beta(\rho,\varphi) = \Big( 1 + \frac{4(1
       -\mathrm{skew}(A)^2)}{(\rho^{-1}-\rho)^2} \Big)^{1/2}.
   \end{equation}
   Hence condition \eqref{equivangle} holds for all $\theta \in \R$ if
   $\mathrm{skew}(A) \le 1$.
    \end{itemize}

\end{proof}
\begin{remark} \label{rotnumber} Let us relate the result of
  Proposition \ref{lem2.3} to the theory of rotation numbers; see
  \cite[Ch.11]{KH95}. First, note that this theory uses
    $[0,1)$ instead of $[0, 2\pi)$ as the interval of
    periodicity. Every matrix $A\in \mathrm{GL}(\R^2)$ induces
  a homeomorphism $f:S^1 \to S^1$ of  $S^1=\R / \Z$ via
  the relation (one step of the iteration \eqref{polar})
  \begin{align} \label{polarf}
    v = \| v \| \begin{pmatrix} \cos(2 \pi x) \\ \sin(2 \pi x) \end{pmatrix}
    \mapsto  Av = \|Av\| \begin{pmatrix} \cos(2 \pi f(x)) \\
      \sin(2 \pi f(x))
    \end{pmatrix}, \quad x \in S^1.
  \end{align}
  The homeomorphism is orientation-preserving if and only if
  $\det(A)>0$. For such a homeomorphism the rotation number
  $\tau(f)\in[0,1)$ is
      well defined. Iterating \eqref{polarf} and comparing with \eqref{polar}
      then shows the equality $2 \pi \tau(f)= \hat{\theta}_1(A)$, provided
    no vector rotates by more than $\frac{\pi}{2}$.
    For the matrices in \eqref{Acases} these conditions hold in case (i)
    if $\lambda>0$, in case (ii), and in case (iv) if
      $\mathrm{skew}(A) \le 1$ (see
      \eqref{itertheta}). The corresponding $f$-maps are
        $2 \pi f(x)=\Psi_{\lambda}(2 \pi x)$ (see case (i),
        $\lambda >0$, $\eta=0$, equation \eqref{angleiterate}), $2 \pi
        f(x)=\Gamma_{\eta}(2 \pi x)$ (case (ii), equation
        \eqref{Jordaniterate}), and
        $2 \pi f(x)= \Psi_{\rho}(\varphi+ \Psi_{\rho^{-1}}(2 \pi x))$ (case (iv)).
           Determining the exact first angular value $\theta_1(A)$ in
           case $\mathrm{skew}(A)>1$ of (a) is more
             involved. In Theorem \ref{prop5.1} we will
             show that the inequality \eqref{complexgeneral} is generally strict except
      for some resonant values of $\varphi=\mathrm{arg}(\lambda)$.
\end{remark}

\subsection{Systems of higher dimension}
\label{sec5b}
As a first step we consider a matrix with a single eigenvalue which
generalizes the second case in \eqref{Acases}. Its proof is stated in
the Supplementary materials \ref{app3}.
\begin{proposition} \label{prop2.5}
  Assume that the spectrum of $A \in \R^{d,d}$ consists of one
  eigenvalue $\lambda \in \R$, $\lambda\neq0$.
  Then all first angular values vanish, i.e.\
  \begin{equation*}
    \theta_1(A)=0 \quad \text{for} \quad \theta_1 \in \{
  \underaccent{\hat}\theta_{[1]}, \underaccent{\hat}\theta_{1}, \hat
\theta_1, \hat\theta_{[1]}, \underaccent{\bar}\theta_{[1]},
\underaccent{\bar}\theta_{1}, \bar \theta_1, \bar\theta_{[1]}\}.
\end{equation*}
 \end{proposition}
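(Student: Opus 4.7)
The plan is to first reduce to the case $\lambda = 1$. By Proposition~\ref{prop2}(i) applied with $Q_n = \lambda^{-n} I$ (so $Q_n = r_n Q$ with $r_n = \lambda^{-n}$ and $Q = I$), all eight angular values are invariant under scalar rescaling, so I may assume $\lambda = 1$ and write $A = I + N$ with $N$ nilpotent of some index $p \le d$. By Corollary~\ref{propauto} together with Diagram~\ref{eq2.4} and Lemma~\ref{lem2.2}, it then suffices to prove $\bar\theta_{[1]}(A) = \bar\theta_1(A) = 0$, equivalently that $\sup_{V \in \cG(1,d)} a_{1,n}(V) = o(n)$ as $n \to \infty$. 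In fact I would aim at the stronger statement that the orbit path length $\ell(V) := \sum_{j \ge 1}\ang(A^{j-1}V, A^j V)$ is uniformly bounded in $V$, which trivially implies the required sublinear growth.

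The main analytical input is the binomial expansion. For $V = \Span(v)$ with $v \ne 0$, set $m = m(v) := \max\{k \ge 0 : N^k v \ne 0\} \in \{0, \ldots, p-1\}$. Writing
$$
A^n v = \sum_{k=0}^{m} \binom{n}{k}\, N^k v, \qquad \frac{A^n v}{\binom{n}{m}} = N^m v + \frac{m}{n}\, N^{m-1} v + O(n^{-2}),
$$
one obtains a clean Laurent expansion in $1/n$ with nonzero leading term $N^m v \in \ker N$. Combining this with Lemma~\ref{lem0}(i) I would show $\ang(A^{n-1}V, A^n V) = O(1/n^2)$ pointwise in $V$, so $\ell(V) < \infty$ for every fixed $V$. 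The two-dimensional Jordan case treated in Proposition~\ref{lem2.3} (case (ii), via the monotone iterates of $\Gamma_\eta$) gives a telescoping template that already yields $\ell(V) \le \pi/2$ uniformly in the $d = 2$ case; I would use this as the base of an induction on $d$.

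The hard part will be upgrading the pointwise estimate to a uniform bound, since the constant in the $O(1/n^2)$ bound degenerates on the boundary of the stratum where $\|N^m v\|$ approaches zero. My plan is to induct on $d$ using the $A$-invariant filtration $F_k := \ker N^k$. For $V \subset F_{p-1}$, which is a proper $A$-invariant subspace (because $N^{p-1} \ne 0$ forces $F_{p-1} \ne \R^d$), the inductive hypothesis bounds $\ell(V)$ uniformly. For the complementary generic stratum $\{V : m(V) = p-1\}$, I would invoke Lemma~\ref{lem2.6} with the block-diagonal rescaling $S = D_n := \mathrm{blockdiag}(n^{p_i - 1}, \ldots, n, 1)$ adapted to each Jordan block of size $p_i$, so that $D_n^{-1} A^n D_n = (I + N/n)^n \to e^N$ stays bounded and the angles in the conjugated system decay fast. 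The central obstacle is balancing the growing factor $\kappa(D_n)(1+\kappa(D_n)) = O(n^{2(p-1)})$ from Lemma~\ref{lem2.6} against a suitably sharpened angle estimate on the rescaled side; the clean binomial structure of $A^n$ (no mixing beyond the geometric series in $N/n$) is what I expect to make this balance close enough to yield a summable bound, hence a uniform bound on $\ell$.
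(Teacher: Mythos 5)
Your reductions (rescaling to $\lambda=1$ via Proposition \ref{prop2}, reducing everything to $\bar\theta_1(A)=0$ via Lemma \ref{lem2.2} and Corollary \ref{propauto}, and the binomial expansion of $A^nv$ along the nilpotent part) coincide with the paper's setup, but the heart of the proof --- an estimate that is uniform in $V$ --- is precisely the step you leave open, and the route you sketch for it does not close. The obstruction is quantitative. For every $n$ there are directions whose ``turning time'' is comparable to $n$, and for these a \emph{single} step contributes an angle of order one: in the $2\times 2$ Jordan case take $v=(-n,1)^\top$, so that $A^{n-1}v=(-1,1)^\top$, $A^nv=(0,1)^\top$ and $\ang(A^{n-1}v,A^nv)=\pi/4$. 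Hence no bound $\ang(A^{n-1}V,A^nV)=O(n^{-2})$ can hold uniformly in $V$, and the repair via Lemma \ref{lem2.6} with $S=D_n$ fails by a polynomial margin: on the rescaled side $D_n^{-1}AD_n=I+N/n$ moves a generic line by an angle of order exactly $1/n$ (this is the true size for the limiting direction of $D_n^{-1}A^{n-1}v$ on your generic stratum), so Lemma \ref{lem2.6} returns at best $\pi\kappa(D_n)(1+\kappa(D_n))\cdot O(1/n)\sim n^{2p-3}$, which for $p\ge2$ is worse than the trivial bound $\pi/2$. The ``suitably sharpened estimate on the rescaled side'' you hope for would have to beat $n^{-(2p-2)}$ uniformly, which is false for the same reason. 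Your stronger goal --- a $V$-uniform bound on the path length $\ell(V)$ --- is plausibly true (e.g.\ by a Crofton/Bezout count of hyperplane crossings of the polynomial curve $j\mapsto[A^jv]$), but nothing in your outline establishes it, and the induction over the filtration $\ker N^k$ only disposes of the lower strata, not of the generic one where the constants degenerate.

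The paper resolves the uniformity issue by a different and cheaper device, and with a weaker (but sufficient) conclusion. Writing the coordinates of $A^jv$ as polynomials $q_i(j)$, it proves the $v$-independent bound $\ang(A^jv,A^{j+1}v)\le C_\star/\min_\nu|j-x_\nu|$ whenever this minimum is at least $1$, where the $x_\nu$ are the real parts of the roots of the leading polynomial $q_1$. The non-uniformity is thus absorbed into the unknown location of at most $d_\star$ roots: at most $2d_\star$ integers $j$ lie within distance $1$ of some $x_\nu$ and contribute at most $\pi/2$ each, while the remaining terms are dominated by a harmonic sum, giving $\sup_V a_{1,n}(V)=O(\log n)$ --- not a bounded path length, but after division by $n$ exactly what is needed for $\bar\theta_1(A)=0$ and hence, via Lemma \ref{lem2.2} and Corollary \ref{propauto}, for all eight angular values. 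To complete your argument you would have to either prove the uniform path-length bound directly (the Crofton-type count) or import a root-counting step of this kind; as it stands, the central estimate is missing and the proposed mechanism for it would fail.
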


To proceed further, we require the following lemma.
\begin{lemma} (Blocking Lemma)
  \label{lemblock}
  Let $\R^d = X_s \oplus X_u$ be a decomposition into
  invariant subspaces of $A\in \R^{d,d}$ such that $A_s=A_{|X_s}$ and $A_u=A_{|X_u}$ satisfy
  \begin{equation} \label{eq:split}
    |\sigma(A_s)| < | \sigma(A_u)|.
  \end{equation}
  Then  the following holds for all types of angular values
  $\theta_1\in \{ \bar{\theta}_1, \underaccent{\bar}{\theta}_1, \hat{\theta}_1
  ,\underaccent{\hat}{\theta}_1 \}$
  \begin{equation} \label{eqreduce}
    \theta_1(A) = \max (\theta_1(A_s),\theta_1(A_u)).
  \end{equation}
\end{lemma}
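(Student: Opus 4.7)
My plan is to exploit the spectral gap \eqref{eq:split}: for any $V_0 \in \mathcal{G}(1,d)$ not contained in $X_s$, the iterates $A^j V_0$ align geometrically with $X_u$, so the angular dynamics inherit from $A_u$ acting on $\mathcal{G}(1, X_u)$. The ``$\geq$'' direction of \eqref{eqreduce} is immediate from $A$-invariance: for $V_0 \in \mathcal{G}(1, X_\bullet)$ with $\bullet \in \{s, u\}$, the orbit stays in $X_\bullet$ and its angular sum is exactly that produced by $A_\bullet$, so $\theta_1(A) \geq \max(\theta_1(A_s), \theta_1(A_u))$ for all four types.

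For the ``$\leq$'' direction, choose $\mu, \rho$ with $\max|\sigma(A_s)| < \mu < \rho < \min|\sigma(A_u)|$ and constants $C, c > 0$ so that $\|A_s^j v_s\| \leq C \mu^j \|v_s\|$ and $\|A_u^j v_u\| \geq c \rho^j \|v_u\|$ for $v_\bullet \in X_\bullet$, $j \geq 0$. Let $P_u$ be the spectral projector onto $X_u$ parallel to $X_s$; decompose $v_0 = v_s + v_u$ and assume $v_u \neq 0$. Then $w_j := A^j v_0 = A_s^j v_s + A_u^j v_u$ and $\tilde w_j := A_u^j v_u$ satisfy
\begin{equation*}
\|w_j - \tilde w_j\|/\|\tilde w_j\| \leq (C\|v_s\|/(c\|v_u\|))(\mu/\rho)^j,
\end{equation*}
so Lemma \ref{lem0}(i) yields $\ang(w_j, \tilde w_j) \leq 2(C\|v_s\|/(c\|v_u\|))(\mu/\rho)^j$ once this ratio drops below $1/2$, i.e.\ past a threshold $j_0(V_0) = O(\log(\|v_s\|/\|v_u\|))$. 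A crucial consequence is that $\sum_{j \geq j_0}\ang(w_j, \tilde w_j) \leq 1/(1-\mu/\rho) =: M_0$ is an \emph{absolute} constant independent of $V_0$, because past $j_0$ the bound decays geometrically from a value $\leq 1$. Setting $\tilde V_0 := \Span(v_u) \in \mathcal{G}(1, X_u)$, the triangle inequality yields $|a_{j_0, n}(V_0) - a_{j_0, n}(\tilde V_0)| \leq 2 M_0$, so $\limsup_n a_{1,n}(V_0)/n = \limsup_n a_{1,n}(\tilde V_0)/n \leq \hat\theta_1(A_u)$, and analogously for $\liminf$. Combined with the trivial case $V_0 \subset X_s$, taking $\sup_{V_0}$ gives the upper bounds for $\hat\theta_1$ and $\underaccent{\hat}\theta_1$.

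The inner values $\bar\theta_1 = \underaccent{\bar}\theta_1$ (equal in the autonomous case by Corollary \ref{propauto}) require uniformity in $V_0$, since the individual constant blows up as $V_0$ approaches $\mathcal{G}(1, X_s)$. I split $[1, n]$ into a stable-dominated phase $[1, j_1]$ (where $\|A_s^j v_s\| \geq 2\|A_u^j v_u\|$), a transition of bounded length $j_2 - j_1 = O(1)$, and an unstable-dominated phase $[j_2, n]$. Applying the summable-tail estimate within each outer phase (to $V_0^s := \Span(v_s)$ and to $\tilde V_0$ respectively) yields
\begin{equation*}
a_{1,n}(V_0) \leq j_1\, \alpha_{j_1}^{(s)} + (n-j_2)\, \alpha_{n-j_2}^{(u)} + O(1),
\end{equation*}
where $\alpha_k^{(\bullet)} := \sup_{V \in \mathcal{G}(1, X_\bullet)} a_{1,k}^{(\bullet)}(V)/k \to \bar\theta_1(A_\bullet)$ by Corollary \ref{propauto}. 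Dividing by $n$ and splitting on whether $j_1$ and $n - j_2$ exceed a fixed threshold $K$, each case gives $a_{1,n}(V_0)/n \leq \max(\bar\theta_1(A_s), \bar\theta_1(A_u)) + o(1)$ uniformly in $V_0$ (when a weight is below threshold, it is $O(K/n)$; when above, the corresponding $\alpha_k^{(\bullet)}$ is within $\varepsilon$ of its limit). Invoking the Fekete representation $\bar\theta_1(A) = \inf_n n^{-1}\sup_V a_{1,n}(V)$ from Corollary \ref{propauto} then gives $\bar\theta_1(A) \leq \max(\bar\theta_1(A_s), \bar\theta_1(A_u))$. The hard part will be making this three-phase decomposition rigorous when $V_0$ is very close to $\mathcal{G}(1, X_s)$ and $j_1$ grows with $n$; the resolution is precisely the bounded transition length and the absolute bound on the post-alignment tail, which keep the contributions of the two outer phases consistent with the asymptotics of $\alpha_k^{(s)}$ and $\alpha_k^{(u)}$.
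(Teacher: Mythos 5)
Your proposal is correct and follows essentially the same route as the paper's proof: the trivial lower bound from invariance, comparison of a general orbit with its unstable (resp.\ stable) part via Lemma \ref{lem0}(i) with geometrically summable angle errors for the outer values, and for the inner values a crossover-time decomposition into a stable-dominated phase, a transition of bounded length, and an unstable-dominated phase, combined with the uniform (Fekete) convergence from Corollary \ref{propauto}. The paper implements the step you flag as ``the hard part'' exactly as you anticipate, using the first crossover index $k_{\star}(v)$ together with a fixed buffer $j_{\star}$ so that both outer phases carry only absolute-constant error sums and the transition contributes $O(1)$.
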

We refer to the Supplementary materials \ref{app4} for a proof of the
Blocking Lemma.

\begin{remark}\label{remblockuniform}
  By Corollary \ref{propauto} it is clear that formula \eqref{eqreduce}
  also holds for the uniform first angular value $\bar{\theta}_{[1]}(A)$.
  We did not succeed in proving this for the remaining three uniform
  first angular values. However, we will be able to treat these three
  values in the subsequent main Theorem \ref{cor2.4}  under a special assumption.
  \end{remark}

The following Theorem combines the results of Propositions \ref{lem2.3},
\ref{prop2.5} and Lemma \ref{lemblock}.

\begin{theorem} \label{cor2.4}
  Let the spectrum of  $A\in \mathrm{GL}(\R^d)$
  satisfy
    \begin{equation} \label{eqeigencond}
    \begin{aligned}
     \lambda \in \sigma(A), \lambda \notin \R \Longrightarrow \lambda
     \text{ is simple and }
    |\eta| \neq |\lambda| \quad \forall \eta \in \sigma(A)\setminus
    \{\lambda,\bar{\lambda}\}.
    \end{aligned}
    \end{equation}
    Then all  $8$ types of angular values
    $\theta_1(A)$ with $\theta_1\in \{
  \underaccent{\hat}\theta_{[1]}, \underaccent{\hat}\theta_{1}, \hat
\theta_1, \hat\theta_{[1]}, \underaccent{\bar}\theta_{[1]},
\underaccent{\bar}\theta_{1}, \bar \theta_1, \bar\theta_{[1]}\}$
     coincide.
  Let $\R^d = \bigoplus_{i=1}^k \range(Q_i)$, $Q_i \in \R^{d,d_i}$,
  $Q_i^{\top}Q_i = I_{d_i}$  be a
  decomposition of $\R^d$ into invariant subspaces of $A$ corresponding to
  eigenvalues of equal modulus, i.e.\
       \begin{equation} \label{orderAi}
          \begin{aligned}
            A Q_i & = Q_i A_i, \quad
            A_i \in   \R^{d_i,d_i}, \quad
            |\sigma(A_1)|, \ldots, |\sigma(A_k)| \text{ pairwise different.}
          \end{aligned}
       \end{equation}
    Then the following equality holds
    \begin{equation} \label{angcomplex}
    \theta_1(A)= \max_{i=1,\ldots,k}\theta_1(A_i).
    \end{equation}
    If there exist two real
    eigenvalues of opposite sign in $\sigma(A)$ then
    $\theta_1(A)=\frac{\pi}{2}$.
    Otherwise, the following estimate holds
             \begin{equation} \label{angestgeneral}
               \theta_1(A) \le \max_{\lambda \in \sigma(A)}
               \min(|\mathrm{arg}(\lambda)|, \pi -
               |\mathrm{arg}(\lambda)|).
             \end{equation}
  Equality holds  if the maximum on the right-hand
  side of \eqref{angestgeneral} is zero or if it is achieved for an eigenvalue
  $\lambda_{i_0}\in \sigma(A_{i_0})$,
  $i_0\in \{1,\ldots,k\}$  with  $\mathrm{Im}(\lambda_{i_0})\neq 0$
  and  $\mathrm{skew}(A_{i_0}) \le 1$; \ i.e.\
               \begin{equation} \label{anglemax}
          \theta_1(A)=  \min(|\mathrm{arg}(\lambda_{i_0})|,
          \pi-|\mathrm{arg}(\lambda_{i_0})|).
        \end{equation}
\end{theorem}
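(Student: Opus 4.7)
The plan is to carry out a two-stage reduction: first decompose $A$ by eigenvalue moduli via the Blocking Lemma~\ref{lemblock}, then analyse each modulus block through Propositions~\ref{lem2.3} and~\ref{prop2.5}. The extra difficulty is that the Blocking Lemma directly handles only five of the eight angular values, so a separate argument is needed to transfer the coincidence of all eight values from the blocks to the whole matrix.

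By Proposition~\ref{prop2}(i) we may put $A$ into real Schur form and apply the Blocking Lemma iteratively to the decomposition $\R^d=\bigoplus_{i=1}^k \range(Q_i)$ ordered as in~\eqref{orderAi}. This yields $\theta_1(A)=\max_i\theta_1(A_i)$ for the four values $\bar\theta_1,\underaccent{\bar}\theta_1,\hat\theta_1,\underaccent{\hat}\theta_1$, and Remark~\ref{remblockuniform} extends the equality to $\bar\theta_{[1]}$. Hypothesis~\eqref{eqeigencond} then forces each block $A_i$ to fall into one of three cases: (a) $A_i$ has a single real eigenvalue of full algebraic multiplicity, so Proposition~\ref{prop2.5} gives that all eight angular values of $A_i$ vanish; (b) $A_i$ is $2\times 2$ with a simple complex conjugate pair $\lambda_i,\bar\lambda_i$, so Proposition~\ref{lem2.3}(b) gives that all eight values of $A_i$ coincide and are bounded by $\min(|\mathrm{arg}(\lambda_i)|,\pi-|\mathrm{arg}(\lambda_i)|)$ with equality when $\mathrm{skew}(A_i)\le 1$; (c) $\sigma(A_i)=\{|\lambda_i|,-|\lambda_i|\}$, in which case a two-dimensional invariant subspace $X \subseteq \range(Q_i)$ spanned by eigenvectors for $\pm|\lambda_i|$ falls under Proposition~\ref{lem2.3}(a), and the distinguished one-dimensional subspace $V\subset X$ whose iterates alternate between two perpendicular lines produces $a_{k+1,k+n}(V)=n\frac{\pi}{2}$ for every $k,n$, which via Diagram~\ref{eq2.4} forces all eight values of $A_i$ to equal $\frac{\pi}{2}$.

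For the three remaining uniform angular values, the key observation is that each $Q_i$ has orthonormal columns and $A$-invariant range (since $AQ_i=Q_iA_i$); by Proposition~\ref{prop1} this implies that principal angles between subspaces of $\range(Q_i)$ viewed in $\R^d$ coincide with those of their preimages in $\R^{d_i}$, so $\ang(A^{j-1}V,A^jV)=\ang(A_i^{j-1}\tilde V,A_i^j\tilde V)$ whenever $V=Q_i\tilde V$. Restricting the supremum in the definition of $\underaccent{\hat}\theta_{[1]}$ to subspaces of $\range(Q_i)$ therefore gives $\underaccent{\hat}\theta_{[1]}(A)\ge \underaccent{\hat}\theta_{[1]}(A_i)=\theta_1(A_i)$ for each $i$, where the last equality uses the block-level coincidence established above. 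Taking the maximum over $i$ and combining with Diagram~\ref{eq2.4} yields the chain $\bar\theta_{[1]}(A)\ge\underaccent{\hat}\theta_{[1]}(A)\ge\max_i\theta_1(A_i)=\bar\theta_{[1]}(A)$, collapsing all eight values of $A$ to the common value $\max_i\theta_1(A_i)$. The identity~\eqref{angcomplex}, the $\pi/2$ statement from case~(c), the bound~\eqref{angestgeneral}, and the sharp equality~\eqref{anglemax} then follow directly from the block classification.

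The main obstacle is the third paragraph: the Blocking Lemma alone does not see the three lower uniform values, and closing that gap requires both the isometric-embedding property of the orthonormal $Q_i$ and the fact, established one level deeper by Propositions~\ref{lem2.3} and~\ref{prop2.5}, that on each block all eight angular values already coincide. Once this coincidence is transferred from the blocks to the whole matrix via the $\underaccent{\hat}\theta_{[1]}$ lower bound and the diagram, the remaining statements assemble routinely from the two-dimensional classification.
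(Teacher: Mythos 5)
Your proposal is correct and follows essentially the same route as the paper: the Blocking Lemma applied to the equal-modulus decomposition with orthonormal bases $Q_i$, the block classification via Propositions \ref{lem2.3} and \ref{prop2.5} (including the perpendicular, alternating line for a $\pm\lambda$ pair), and the restriction-to-$\range(Q_i)$ argument combined with Diagram \ref{eq2.4} to reach the uniform values. Your single squeeze between the extreme entries $\underaccent{\hat}\theta_{[1]}(A)$ and $\bar\theta_{[1]}(A)$ of the diagram is merely a more compact packaging of the paper's three separate chains for $\hat\theta_{[1]}$, $\underaccent{\hat}\theta_{[1]}$ and $\underaccent{\bar}\theta_{[1]}$.
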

\begin{remark} \label{remopposite}
 For the formulas \eqref{angcomplex} and \eqref{anglemax} it is
 essential to choose orthonormal
  bases for the invariant subspaces. Other bases will preserve the spectra of
  the matrices $A_i$ but neither the values $\mathrm{skew}(A_i)$ nor the
  angular values $\theta_1(A_i)$, see Proposition \ref{lem2.3} (b).
  Except for the first block $\Lambda_1$, the angular values of
  $\Lambda_i$ in the Schur form \eqref{eq2.8} generally do not agree with
  $\theta_1(A_i)$, see Algorithm \ref{algo} and the example in Section
  \ref{4Dillu}.
\end{remark}

\begin{proof}
  Let us first prove \eqref{angcomplex} for all $4$ nonuniform types of angular value.
  Note that a decomposition $\R^d = \bigoplus_{i=1}^k \range(Q_i)$ of
  the desired type  always
  exists since we can decompose $\sigma(A)$ into subsets of equal modulus
  and then select an  orthogonal basis for each of the corresponding invariant
  subspaces. In this way we transform
  $A$ into block-diagonal form in a specific way (see \eqref{orderAi}),
  \begin{align} \label{specialblock}
    A \begin{pmatrix} Q_1 & \cdots & Q_k \end{pmatrix}=
    \begin{pmatrix} Q_1 & \cdots & Q_k \end{pmatrix} \mathrm{diag}(A_1, \ldots, A_k).
  \end{align}
  If one does not insist on orthonormal bases for the subspaces then
  one can keep the diagonal blocks $\Lambda_i$ from the Schur form; see
  \cite[Thm 7.1.6]{GvL2013}.
    From $Q_i^{\top}Q_i=I_{d_i}$ we obtain for every $v_i  \in
    \R^{d_i}, v_i \neq 0$, $i=1,\ldots,k$, $j \in \N$,
  \begin{align*}
    \ang(A^{j-1}Q_i v_i, A^j Q_i v_i) = \ang(Q_iA_i^{j-1}v_i,Q_i A_i^j v_i) =
    \ang(A_i^{j-1}v_i, A_i^j v_i),
  \end{align*}
  so that all first angular values of $A_i$ and of the restriction $A_{|\range(Q_i)}$
  coincide. Hence Lemma \ref{lemblock} shows \eqref{angcomplex}.
  Note that \eqref{angcomplex} also holds for the $\bar{\theta}_{[1]}$-values
  by Corollary \ref{propauto}.

  Now assume that there exist two real eigenvalues $\lambda, -\lambda
  \in \sigma(A)$ and  w.l.o.g. assume $\lambda,-\lambda \in
  \sigma(A_1)$.
  Then there exists an orthogonal $S \in \R^{d_1,d_1}$ and some $\eta \in \R$ such that
  \begin{align*}
    A_1 S = SM, \quad M= \begin{pmatrix} M_{11} & M_{12} \\ 0 & M_{22} \end{pmatrix},
    M_{11}= \begin{pmatrix} \lambda & \eta \\0 & -\lambda \end{pmatrix}.
  \end{align*}
  The first two columns of $S$ form an orthonormal basis of the span
  of eigenvectors which belong to $\lambda$ and $- \lambda$. Choosing
  initial vectors
  $v_1=(v_0,0,\ldots,0)^{\top}\in \R^{d_1}, v_0 \in \R^2$ we find for all
  $j \in \N$
  \begin{align*}
    \ang(A_1^{j-1}Sv_1,A_1^j Sv_1) = \ang(SM^{j-1}v_1,SM^jv_1)=
    \ang(M^{j-1}v_1,M^jv_1)
      =\ang(M_{11}^{j-1}v_0,M_{11}^j v_0).
  \end{align*}
  The proof of Proposition \ref{lem2.3}(a) (see case (iii) in \eqref{Acases})
  shows that there exists  $v_0 \in \R^2$, $v_0 \neq0$ such that for all $j \in \N$
  \begin{align*}
    \frac{\pi}{2}=\ang(M_{11}^{j-1}v_0,M_{11}^j v_0)=\ang(A_1^{j-1}Sv_1,A_1^j Sv_1)=
    \ang(A^{j-1}Q_1S v_1, A^j Q_1 S v_1).
  \end{align*}
  Since $\frac{\pi}{2}$ is the maximum of all angular values, Definition
  \ref{defangularvalues} implies that all $8$ types of angular
  values are equal to $\frac{\pi}{2}$.

  If such a pair of real eigenvalues does not exist then
  assumption \eqref{eqeigencond} shows that the matrices $A_i$ either are
  two-dimensional as in Proposition \ref{lem2.3} (see case (iv) in
  \eqref{Acases}) or have
  a single real eigenvalue as in Proposition \ref{prop2.5}.
  In both cases the propositions guarantee all first angular values
  of the matrices $A_i$ to coincide. Thus  the four nonuniform
  angular values of the given matrix $A$ are equal by \eqref{angcomplex}.
  For $\bar{\theta}_{[1]}(A)$ the result then
  follows from Corollary \ref{propauto}. Moreover,
  Lemma \ref{lem2.2} yields formula \eqref{angcomplex} and the coincidence
  of the
  $\hat{\theta}_{[1]}$-values:
  \begin{align*}
    \bar{\theta}_{1}(B)=
     \hat{\theta}_{1}(B)
    \le \hat{\theta}_{[1]}(B) \le \bar{\theta}_{[1]}(B)=
    \bar{\theta}_{1}(B), \quad B \in \{A,A_i(i=1,\ldots,k)\}.
  \end{align*}
  Next we show $ \underaccent{\hat}\theta_{[1]}(A)= \underaccent{\hat}\theta_{1}(A)$.
  From \eqref{angcomplex} we  find an index $\ell \in \{1,\ldots,k\}$
  for which $\underaccent{\hat}\theta_{1}(A)
  =\underaccent{\hat}\theta_{1}(A_{\ell})$ holds. Then we use Lemma
  \ref{lem2.2} and the equality of angular values
  from Propositions \ref{lem2.3} and \ref{prop2.5},
   \begin{align*}
     \underaccent{\hat}\theta_{1}(A)&=\underaccent{\hat}\theta_{[1]}(A_{\ell})
     = \sup_{V_{\ell} \in \mathcal{G}(1,d_{\ell})}\liminf_{n \to \infty} \frac{1}{n}
     \inf_{k \in \N_0}\sum_{j=k+1}^{k+n} \ang(Q_{\ell}A_{\ell}^{j-1}V_{\ell},Q_{\ell}A_{\ell}^jV_{\ell})\\
   & = \sup_{V_{\ell} \in \mathcal{G}(1,d_{\ell})}\liminf_{n \to \infty} \frac{1}{n}
     \inf_{k \in \N_0}\sum_{j=k+1}^{k+n} \ang(A^{j-1}Q_{\ell}V_{\ell},A^jQ_{\ell}V_{\ell})
\\ &\le \sup_{V \in \mathcal{G}(1,d)}\liminf_{n \to \infty} \frac{1}{n}
\inf_{k \in \N_0}\sum_{j=k+1}^{k+n} \ang(A^{j-1}V,A^jV) =
 \underaccent{\hat}\theta_{[1]}(A) \le \underaccent{\hat}\theta_{1}(A).
   \end{align*}
   Using Lemma \ref{lem2.2} we obtain the result for the last angular value
    $\underaccent{\bar}\theta_{[1]}(A)$:
\begin{align*}
    \underaccent{\hat}\theta_{1}(B)=
     \underaccent{\hat}\theta_{[1]}(B)
     \le \underaccent{\bar}\theta_{[1]}(B) \le
  \underaccent{\bar}\theta_{1}(B)=
   \underaccent{\hat}\theta_{1}(B), \quad B \in \{A,A_i(i=1,\ldots,k)\}.
  \end{align*}

  Finally, the estimate
  \eqref{angestgeneral} follows from
  \eqref{complexgeneral} and Proposition \ref{prop2.5}.
  If the maximum value on the right of \eqref{angestgeneral} is zero then the
  assertion \eqref{anglemax} is obvious. Otherwise, it follows from
  \eqref{angcomplex} and \eqref{skew}
  in Proposition \ref{lem2.3} when applied to the $2 \times 2$
  matrix $A_{i_0}$.
    \end{proof}
Note that condition \eqref{eqeigencond} excludes a complex eigenvalue of
multiplicity $\ge 2$ and another eigenvalue of the same modulus.
Let us consider such an exceptional case, namely a block diagonal matrix
with two rotations
  \begin{equation} \label{blockdiag}
    A= \begin{pmatrix} T_{\varphi_1} & 0 \\ 0 & T_{\varphi_2} \end{pmatrix},
    \quad 0 \le \varphi_1,\varphi_2 \le \frac{\pi}{2}.
     \end{equation}
  We claim that every type of angular value is given by
  \begin{equation*} \label{angblock}
    \theta_1(A) = \max(\varphi_1,\varphi_2).
  \end{equation*}
    Let $v=\begin{pmatrix} v_1 & v_2 \end{pmatrix}^{\top}$, $v_1, v_2 \in \R^2$
  and $|v_1|^2 + |v_2|^2=1$. Since $A$ is orthogonal we obtain
  \begin{align*}
    \cos(\ang(Av,v)) & =  |\langle Av,v \rangle| =
    | \langle T_{\varphi_1}v_1,v_1 \rangle +
    \langle  T_{\varphi_2}v_2,v_2 \rangle|
    \\
    & =\cos(\varphi_1)|v_1|^2 + \cos(\varphi_2)|v_2|^2=
    |v_1|^2(\cos(\varphi_1)-\cos(\varphi_2)) + \cos(\varphi_2).
  \end{align*}
    By the orthogonal invariance of the angle (see Proposition \ref{prop2})
 this leads to
  \begin{align} \label{expressum}
    \lim_{n \to \infty} \frac{1}{n} \sum_{j=1}^n \ang(A^{j-1}v,A^jv)
    =\arccos\big(|v_1|^2(\cos(\varphi_1)-\cos(\varphi_2)) + \cos(\varphi_2)\big).\end{align}
  Suppose w.l.o.g.\ that $\varphi_2 \ge \varphi_1$ so that
  $\cos(\varphi_1)-\cos(\varphi_2)\ge 0$. Then the maximum w.r.t.\ $v$ in \eqref{expressum}
   occurs for $|v_1|=0$, hence
    $\theta_1(A) = \varphi_2$.
  The same argument applies to a block diagonal matrix with $k$ blocks
  $T_{\varphi_i},i=1,\ldots,k$  on the diagonal, leading to
  $\theta_1(A) = \max_{i=1,\ldots,k} \varphi_i$. However, we did not find
  a formula for $\theta_1(A)$ in cases which violate \eqref{eqeigencond}
  but which are more general than \eqref{blockdiag}.

\section{Numerical algorithms and results}
\label{sec6}
In this section, our main goal is to discuss algorithms for the computation of the first
outer angular value $\hat{\theta}_1(A)$ of an autonomous system
generated by a matrix
$A\in\R^{d,d}$. First we investigate the
two-dimensional case where our focus is on matrices with
$\mathrm{skew}(A)>1$.
We extend the theory underlying Proposition \ref{lem2.3} and compare with
numerical computations.

Then we use the results from Lemma \ref{lemblock} and
Theorem \ref{cor2.4} to develop an algorithm for
matrices of arbitrary dimension.
Let us emphasize that the whole  calculation aims at first outer
angular values. In the autonomous case we know the coincidence with
inner angular values by Theorem \ref{cor2.4}. However, for general
nonautonomous systems the computation of inner angular values  turns
out to be quite involved since one has to solve an optimization
problem in every time step.

Let us also note that simple algorithms based on
  subspace iterations tend to fail. The forward iteration of a generic one-dimensional
subspace converges to the most unstable direction. However, we must
consider also non-generic directions, i.e.\ all invariant subspaces, in
order to compute $\hat\theta_1(A)$.

\subsection{Two dimensional autonomous examples}
\label{sec5.2.2}
Consider the normal form \eqref{normalform} with increasing skewness
and recall from Proposition \ref{lem2.3} that all angular values coincide,
\begin{align}\label{rhomatrix}
     A(\rho,\varphi) =\begin{pmatrix} \cos(\varphi) & - \rho^{-1} \sin(\varphi)\\
    \rho \sin(\varphi) & \cos(\varphi) \end{pmatrix}, \quad
     0<\rho \le 1, \quad 0 <  \varphi \le \frac \pi 2.
\end{align}
In the following Table \ref{2Dnumb} we compare for three cases the
value of $\varphi$ from its normal form with the numerical value
$\hat{\theta}_{1,\mathrm{num}}$
obtained by solving the
optimization problem
\begin{equation}\label{opt2solve}
\hat \theta_{1,\mathrm{num}} =
  \max_{v\in\R^2,\|v\|=1} \frac 1N\sum_{j=1}^N
  \ang(A^{j-1}v,A^j v), \quad N=1000
\end{equation}
with the \textsc{MATLAB}-routine \texttt{fminbnd}. Note that in this case
computations using forward iteration are not spoilt by a dominating direction
since $A$ has two eigenvalues of equal modulus.
\begin{table}[hbt]
\begin{center}
\begin{tabular}{c||c|c|c|c|c}
$A$ & $\mathrm{skew}(A)$ & eigenvalues & $\varphi$ & $\hat \theta_{1,\mathrm{num}}$
  & $\varphi - \hat \theta_{1,\mathrm{num}}$ \\\hline &&&& \\[-4mm]
$\left(\begin{smallmatrix} 2 & 1\\ -1 & 3 \end{smallmatrix}\right)$
& $\frac 1{\sqrt 7}<1$ & $\tfrac 52\pm \tfrac{\sqrt 3}2 i$ & $\arctan(\tfrac{\sqrt 3}5)$ &
$0.33347$ & $6\cdot 10^{-17}$\\[1mm]\hline &&&& \\[-4mm]
$\left(\begin{smallmatrix} 1 & 1\\ -1 & 1 \end{smallmatrix}\right)$
& $\frac 1{\sqrt 2}<1$ & $1\pm i$ & $\frac \pi 4$ & $0.78540$ &
$1\cdot 10^{-16}$\\[1mm]\hline\hline &&&& \\[-4mm]
$\left(\begin{smallmatrix} 2 & 1\\ -49 & 3 \end{smallmatrix}\right)$
& $\frac {5\sqrt 5}{\sqrt{11}}>1$ & $\tfrac 52\pm \tfrac{\sqrt{195}}2
                                    i$ & $\arctan(\sqrt{\tfrac{39}5})$
  &
$0.52709$ & $0.6999$
\end{tabular}
\end{center}
\caption{First angular values for autonomous examples with increasing skewness.
\label{2Dnumb}}
\end{table}

The first and second example in Table \ref{2Dnumb} have skewness $\le1$.
Then the numerical  angular value $\hat{\theta}_{1,\mathrm{num}}$
agrees with $\min(|\arg(\lambda)|,\pi-|\arg(\lambda)|)$ to machine accuracy,
as predicted by Proposition \ref{lem2.3}.
However, the third example belongs to the values
$\varphi = \arctan(\sqrt{\tfrac{39}5})$, $\rho = \frac{ 10\sqrt 5
  - \sqrt{461}}{\sqrt{39}}\approx \frac 17$ and $\mathrm{skew}(A)\approx 3.37$,
so that Proposition \ref{lem2.3} provides no explicit expression for the first
angular value. The solution
 of \eqref{opt2solve} yields
a substantially smaller value $\hat{\theta}_{1,\mathrm{num}}< \varphi$
in this case. Indeed, the first angular value exhibits a rather subtle
dependence on the matrix entries for $\mathrm{skew}(A)>1$.
Figure \ref{reso} (left panel) shows the result of an extensive
computation of the angular
value for the matrix \eqref{rhomatrix}  with $\rho = \frac 17$ and for
$25210$ equidistant points $\varphi \in [0,\frac \pi 2]$. The vertical
red line on
the left marks the critical value $\varphi_{c}=\arcsin(\frac{2}{\rho+
  \rho^{-1}})$ below which we have
$\mathrm{skew}(A(\rho,\varphi))\le 1$ and Proposition \ref{lem2.3} guarantees
$\varphi$ as the first angular value.
The value $\varphi_c$ seems to be sharp, and
for values $\varphi>\varphi_c$ we observe resonances
occurring at rational multiples of $\pi$.

The following theorem gives an explicit formula for irrational multiples
of $\pi$ and reduces  the computation of the angular value to a finite
optimization problem for rational multiples of $\pi$. For comparison
we show in Figure \ref{reso}(right panel) the diagram of angular values
when evaluated directly  from the result of  Theorem \ref{prop5.1}.
Continuing this evaluation for several values of
  $\rho$ yields the
three-dimensional diagram in Figure \ref{reso3d}.
\begin{figure}[hbt]
 \begin{center}
   \includegraphics[width=0.99\textwidth]{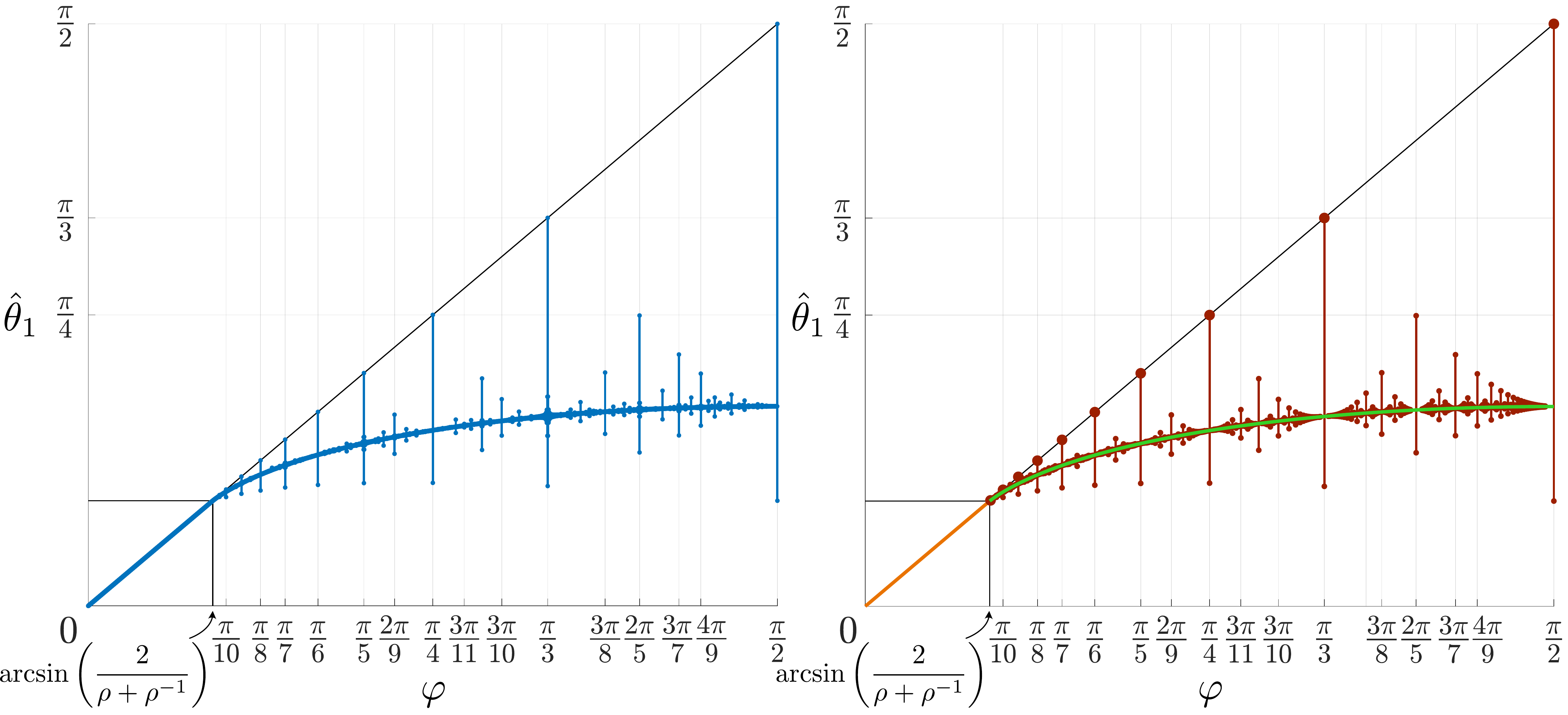}
 \end{center}
 \caption{\label{reso}Angular value $\hat \theta_1$ for \eqref{rhomatrix} with
   $\rho = \frac 17$. Left panel: For
   $25210$ equidistant points $\varphi \in [0,\frac \pi 2]$
   the minimal and maximal first angular value are computed by solving an
   optimization
   problem; minima and maxima are connected with lines. Right panel:
   Computation of first angular value via
 formula \eqref{finalform} in Theorem \ref{prop5.1}. Results for
 case 1  (orange), case 2 (green), case 3 (big points on the diagonal),
 case 4 (small points above the green curve). In case 4, minima are
 also shown (small points below the green curve)
 and connected  with corresponding maxima.}
\end{figure}

\begin{figure}[hbt]
 \begin{center}
   \includegraphics[width=0.9\textwidth]{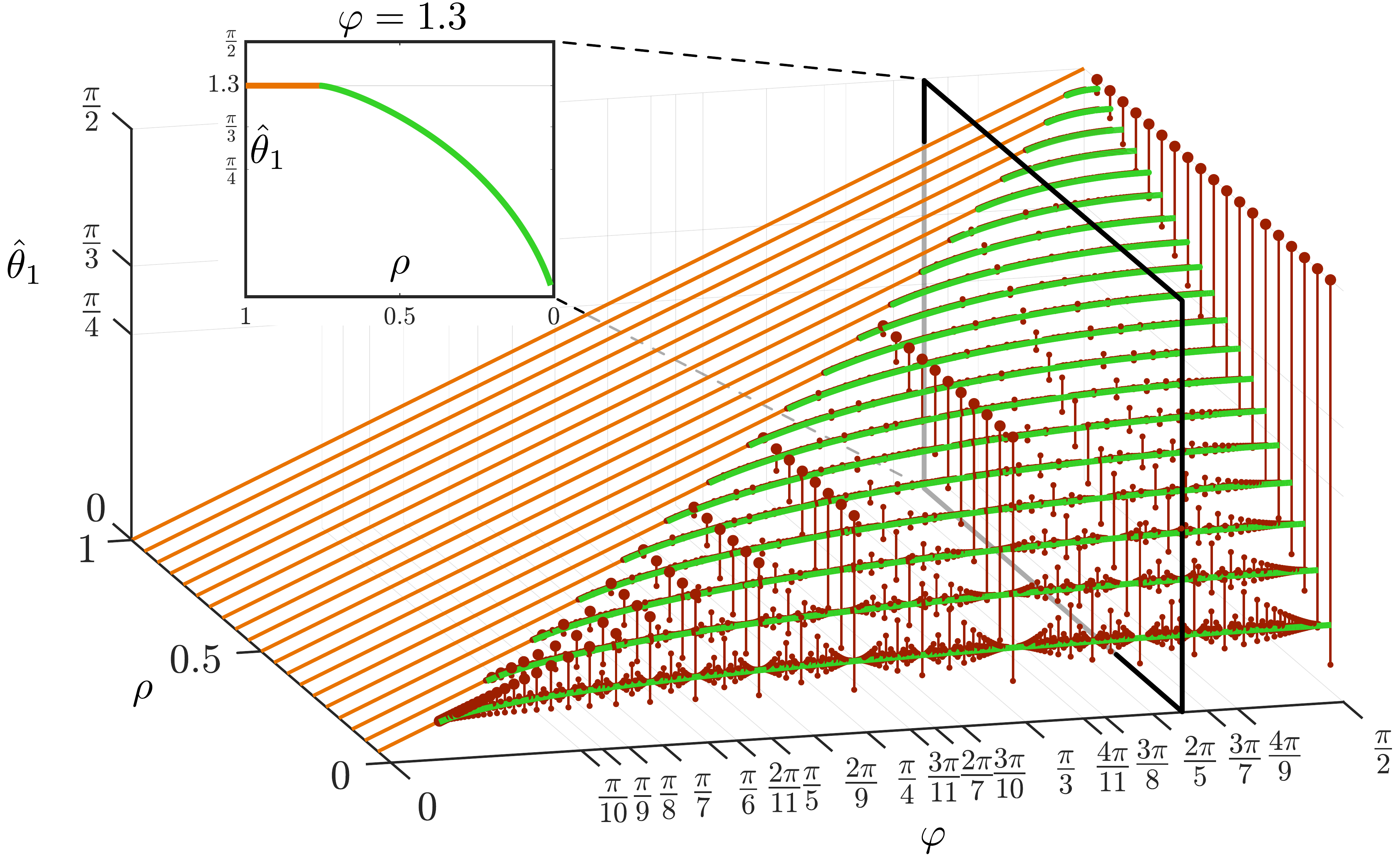}
 \end{center}
 \caption{\label{reso3d}
 Angular value $\hat \theta_1$ for \eqref{rhomatrix} with
   $\varphi\in[0,\frac \pi 2]$ and $\rho = 0.05,
   0.1,\dots,1$. Computation via formula \eqref{finalform} in Theorem
   \ref{prop5.1}. }
\end{figure}

\begin{theorem} \label{prop5.1} For $0< \rho \le 1$ and $0<\varphi \le
  \frac{\pi}{2}$ the first angular value $\hat\theta_1(A(\rho,\varphi))$
  of the  matrix from \eqref{rhomatrix} is given by
  \begin{equation} \label{finalform}
    \hat\theta_1(A(\rho,\varphi)) =
    \begin{cases} \begin{array}{lll}
      \varphi, &
      \mathrm{skew}(A(\rho,\varphi))\le 1, & \mathrm{case (i)}
         \\ \displaystyle
      \varphi+ \frac{1}{\pi} \int_{\{\delta<0\}}\delta(\theta)
      \mathrm{d}\theta, &
        \mathrm{skew}(A(\rho,\varphi))>1, \frac{\varphi}{\pi} \notin \Q,&
        \mathrm{case(ii)}  \\
      \varphi, &  \mathrm{skew}(A(\rho,\varphi))>1, \frac{\varphi}{\pi}= \frac{1}{q},
      q \ge 2,& \mathrm{case(iii)}  \\ \displaystyle
      \frac{1}{q} \max_{0 \le \theta \le \frac{\pi}{2}}\sum_{j=1}^{q}g_j(\theta),
      &  \mathrm{skew}(A(\rho,\varphi))>1, \frac{\varphi}{\pi}=\frac{p}{q}, q \notin p \N, & \mathrm{case(iv).} \end{array}
    \end{cases}
  \end{equation}
  Here the functions $g_j,\delta:[0,\pi] \to \R$, $j \in \N$ are defined as follows:
  \begin{equation*} \label{defdeltag}
  \begin{aligned}
    \delta(\theta)&= 2 \Psi_{\rho}(\theta)-2 \Psi_{\rho}(\theta+ \varphi)
    + \pi, \quad \text{with } \Psi_{\rho} \text{ from } \eqref{angleiterate},  \\
    g_j(\theta)& = \min( \theta_{j}- \theta_{j-1},
     \theta_{j-1} +\pi-\theta_{j} ), \quad
     \theta_{j-1} = \Psi_{\rho}((j-1) \varphi + \Psi_{\rho^{-1}}(\theta)).
  \end{aligned}
  \end{equation*}
  If $\frac{\pi}{2} < \varphi < \pi$ then $\hat\theta_1(A(\rho,\varphi)) =
  \hat\theta_1(A(\rho,\pi-\varphi))$.
  \end{theorem}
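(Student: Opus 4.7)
The plan is to leverage the explicit iteration derived for case (iv) of \eqref{Acases} in the proof of Proposition \ref{lem2.3}. For $V_0 = \mathrm{span}(\cos\theta_0,\sin\theta_0)$ the angle of the $j$-th iterate satisfies $\theta_j = \Psi_\rho(j\varphi + \alpha_0)$ with $\alpha_0 = \Psi_{\rho^{-1}}(\theta_0)$, so $\ang(A^{j-1}V_0, A^j V_0) = \chi\bigl(f(\alpha_0+(j-1)\varphi)\bigr)$ where $f(\alpha) = \Psi_\rho(\alpha+\varphi)-\Psi_\rho(\alpha)$ is $\pi$-periodic and $\chi(x) = \min(|x|,\pi-|x|)$. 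Since all eight first angular values coincide by Proposition \ref{lem2.3}, I may compute $\hat\theta_1$ via this iteration. I will first record two ingredients: the quasi-periodicity $\Psi_\rho(\alpha+\pi)=\Psi_\rho(\alpha)+\pi$ gives by telescoping the identity $\pi^{-1}\int_0^\pi f(\alpha)\,d\alpha = \varphi$, and the pointwise splitting $\chi(f(\alpha)) = f(\alpha) + \delta(\alpha)\one_{\{\delta<0\}}(\alpha)$ with $\delta = \pi-2f$ as in the theorem.

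Case (i) is Proposition \ref{lem2.3}(b). For case (ii), I would invoke the unique ergodicity of the irrational rotation $\tau_\varphi$ on $S^\pi$ already established in the proof of Proposition \ref{lem2.3}, so that Birkhoff sums of the continuous function $\chi\circ f$ converge uniformly in the base point to $\pi^{-1}\int_0^\pi(\chi\circ f) = \varphi + \pi^{-1}\int_{\{\delta<0\}}\delta$. For any rational $\varphi=p\pi/q$ with $\gcd(p,q)=1$, the shift $\tau_\varphi$ has period $q$ on $S^\pi$ since $q\varphi\equiv 0\pmod\pi$; hence for every $\theta_0$ the Ces\`aro limit reduces exactly to $q^{-1}\sum_{j=1}^q g_j(\theta_0) = q^{-1} a_{1,q}(V_0)$, and taking the supremum over $\theta_0\in[0,\pi/2]$ (a parametrization of $\mathcal{G}(1,2)$) yields case (iv).

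Case (iii) requires more work. The upper bound $\hat\theta_1 \le \varphi$ comes from \eqref{complexgeneral}, so I must produce a $V_0$ achieving it: the one-period telescoping $\sum_{j=1}^q(\theta_j-\theta_{j-1}) = \Psi_\rho(\alpha_0+\pi)-\Psi_\rho(\alpha_0) = \pi$ holds for every $\alpha_0$, so it suffices to keep each increment $\Delta_j := \theta_j-\theta_{j-1}$ below $\pi/2$, so that $\chi$ does not truncate and the average equals $\pi/q = \varphi$. My plan is to analyze $f$ on $S^\pi$: from $\Psi_\rho'(\theta)=\rho/(\cos^2\theta+\rho^2\sin^2\theta)$ one reads off the $\pi$-periodicity and the symmetry $\Psi_\rho'(\pi-\alpha) = \Psi_\rho'(\alpha)$, so the equation $\Psi_\rho'(\alpha+\varphi)=\Psi_\rho'(\alpha)$ forces the only critical points of $f$ on $S^\pi$ to be a unique maximum at $\alpha^* = (\pi-\varphi)/2$ and a unique minimum at $\alpha^{**} = \pi-\varphi/2$, making $f$ unimodal. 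Choosing $\alpha_0 = \alpha^*-\varphi/2 = \pi/2-\varphi$ places the $q$-point orbit $\{\alpha_0+j\varphi\}_{j=0}^{q-1}$ symmetrically around $\alpha^*$ at minimum distance $\varphi/2$, and by unimodality the maximum of $f$ on the orbit equals $f(\alpha^*\pm\varphi/2) = f(\pi/2) = \pi/2-\Psi_\rho(\pi/2-\varphi) \le \pi/2$. This unimodality-plus-straddling argument is the main obstacle, and it is precisely what distinguishes case (iii) ($p=1$, grid spacing $\pi/q$ exactly matching the peak neighbourhood of $f$) from case (iv) ($p\ge 2$, grid traversed in leaps) in which only the optimization formula remains. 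Finally, the reduction for $\varphi\in(\pi/2,\pi)$ follows from the orthogonal similarity $A(\rho,\varphi)\sim -A(\rho,\pi-\varphi)$ noted in the proof of Proposition \ref{lem2.3}, combined with Proposition \ref{prop2}(i) and the fact that $\pm M$ induce the same dynamics on lines.
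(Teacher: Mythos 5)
Your proposal is correct and follows the paper's overall strategy (reduce to the circle dynamics $\theta_j=\Psi_\rho(j\varphi+\alpha_0)$, take an ergodic average in the irrational case and a one-period average in the rational case, and split $\chi\circ f$ via the sign of $\delta=\pi-2f$), but two steps are genuinely different. In case (ii) you work directly in the rotated variable $\alpha$, so the dynamics is the pure rotation $\tau_\varphi$ on $S^\pi$ and unique ergodicity gives the integral $\varphi+\pi^{-1}\int_{\{\delta<0\}}\delta$ at once; the paper instead applies Birkhoff's theorem to the conjugated isometry $T_{\rho,\varphi}$ with the image measure $\mu_\rho$ and then changes variables back, which is equivalent but needs the explicit interval $\{\delta<0\}=(\theta_-,\theta_+)$ only for later use. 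The more substantial divergence is case (iii): the paper's argument is a short telescoping trick --- start at $\theta_0=\theta_-$ so that the \emph{first} increment equals exactly $\tfrac{\pi}{2}$; since all increments are nonnegative and sum to $\pi$ over one period, no other increment can exceed $\tfrac{\pi}{2}$, so $\chi$ never truncates. Your alternative establishes that $f$ has exactly one maximum at $\alpha^*=(\pi-\varphi)/2$ and one minimum per period (via $\Psi_\rho'(\theta)=\rho/(\cos^2\theta+\rho^2\sin^2\theta)$ and its symmetry), and then places the $q$-point orbit symmetrically about the peak so every increment is at most $f(\pi/2)\le\pi/2$; this is correct but requires the unimodality analysis the paper avoids, while in return it explains geometrically why $p=1$ is special. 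One small inaccuracy: $[0,\pi/2]$ does \emph{not} parametrize $\mathcal{G}(1,2)$; the restriction of the maximum in case (iv) to $\theta\in[0,\pi/2]$ needs the observation that the one-period sum, as a function of $\alpha_0$, is $\varphi$- and $\pi$-periodic and hence $\tfrac{\pi}{q}$-periodic (the paper is equally terse here), so this is a shared omission rather than a defect specific to your argument.
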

\begin{proof}  The proof is done sequentially for cases (i), (ii),
  (iv), and (iii).
    \begin{itemize}
    \item[(i)] This case follows from \eqref{skew} in Proposition \ref{lem2.3}
      since $A(\rho,\varphi)$ has eigenvalues $e^{\pm i \varphi}$.
  \item[(ii)]
    For nonresonant values $\frac{\varphi}{\pi}\notin \Q$ we return
    to the proof of  \eqref{Acases} case (iv) in Proposition \ref{lem2.3}. Let us
    apply Birkhoff's
  ergodic theorem to the ergodic isometry $T_{\rho,\varphi}$  of
  $(S^{2 \pi},d_{\rho},\mu_{\rho})$ (see \eqref{Tconj}) and to the continuous
  map $g$ from \eqref{chooseg},
  \begin{align} \label{thetaint1}
   \hat\theta_1(A(\rho,\varphi))&= \lim_{n\to \infty}\frac{1}{n}\sum_{j=1}^n
                           g(T_{\rho,\varphi}^{j-1}\xi) =
                           \frac{1}{2\pi} \int_{S^{2 \pi}} g(y)
                           \mathrm{d}\mu_{\rho}(y)
    = \frac{1}{2 \pi} \int_{S^{2\pi}} g(\psi_{\rho}(x)) \mathrm{d}\mu_1(x).
  \end{align}
  The last equality is due to the transformation formula. Also  note that
  the convergence is uniform in $\xi \in S^{2 \pi}$, see \cite[Ch.4.1,
  Remark 1.5]{P89}. We evaluate the integrand for $x=\theta + 2 \pi
  \Z$, $\theta \in [0,2\pi)$,
 \begin{equation*} \label{evalgpsi}
      \begin{aligned}
      g(\psi_{\rho}(x))& = \min ( d_1(\psi_{\rho}(x),T_{\rho,\varphi}\circ\psi_{\rho}(x)),
      d_1(\tau_{\pi}\circ\psi_{\rho}(x),T_{\rho,\varphi}\circ\psi_{\rho}(x))) \\ &=
      \min(d_1(\psi_{\rho}(x), \psi_{\rho}\circ \tau_{\varphi}(x)),
      d_1(\psi_{\rho}\circ\tau_{\pi}(x), \psi_{\rho}\circ \tau_{\varphi}(x)))\\
      & = \min( \Psi_{\rho}(\theta+ \varphi)- \Psi_{\rho}(\theta),
      \Psi_{\rho}(\theta+ \pi)- \Psi_{\rho}(\theta +\varphi)),
      \end{aligned}
 \end{equation*}
 where the last equality follows from
 $     \Psi_{\rho}(\theta) \le  \Psi_{\rho}(\theta + \varphi) <
 \Psi_{\rho}(\theta+\pi) = \Psi_{\rho}(\theta)+ \pi$.
Combining this with \eqref{thetaint1} and using \eqref{liftPsi} leads to
\begin{equation*} \label{thetaint2}
      \begin{aligned}
      \hat\theta_1(A(\rho,\varphi)) & = \frac{1}{2\pi} \int_0^{2 \pi}
      \min( \Psi_{\rho}(\theta+ \varphi)- \Psi_{\rho}(\theta),
      \Psi_{\rho}(\theta +\pi)-\Psi_{\rho}(\theta+ \varphi) ) \mathrm{d}\theta
      \\
      &= \frac{1}{\pi} \int_0^{\pi}
      \min( \Psi_{\rho}(\theta+ \varphi)- \Psi_{\rho}(\theta),
      \Psi_{\rho}(\theta +\pi)-\Psi_{\rho}(\theta+ \varphi) ) \mathrm{d}\theta.
      \end{aligned}
    \end{equation*}
We investigate the minimum by looking at the sign of the difference
 \begin{equation*} \label{investmin}
         \Psi_{\rho}(\theta +\pi)-\Psi_{\rho}(\theta+ \varphi)-(
         \Psi_{\rho}(\theta+ \varphi)- \Psi_{\rho}(\theta))
            = \delta(\theta).
 \end{equation*}
   For $\mathrm{skew}(A(\rho,\varphi))>1$ the equivalence of \eqref{lesspi/2} and
  \eqref{equivangle} yields
  \begin{equation} \label{deltapos}
    \delta(\theta)
   \begin{cases}
   \ \ge 0, &  \theta \in [0,\theta_-] \cup [\theta_+,\pi],\\
   \ < 0, & \theta \in (\theta_-,\theta_+),
\end{cases}
   \end{equation}
  where the values $\theta_{\pm}$ are given as follows
  \begin{equation*} \label{defthetapm}
    \begin{aligned}
       \theta_{\pm} & = \Psi_{\rho^{-1}}(\theta'_{\pm}), \\
        \theta'_{-}& = \frac{1}{2}\arcsin\left(
          \frac{2}{\tan(\varphi)(\rho^{-1}-\rho)}\right) \in
        \big(0,\frac{\pi}{4}\big),\; \theta'_+=\frac{\pi}{2}-\theta'_- \in
        \big(\frac{\pi}{4},\frac{\pi}{2}\big).
      \end{aligned}
  \end{equation*}
   Using \eqref{liftPsi} the following computation completes the proof
  of assertion (ii)
  \begin{equation*}
   \begin{aligned}
  \hat\theta_1(A(\rho,\varphi)) & = \frac{1}{\pi} \Big\{
  \big(\int_0^{\theta_-}+ \int_{\theta_+}^{\pi}\big)
  \Psi_{\rho}(\theta+\varphi)- \Psi_{\rho}(\theta) \mathrm{d}\theta
  + \int_{\theta_-}^{\theta_+} \Psi_{\rho}(\theta+\pi) -
  \Psi_{\rho}(\theta+ \varphi)\mathrm{d}\theta \Big\}\\
  & = \frac{1}{\pi} \Big\{\int_0^{\pi}
  \Psi_{\rho}(\theta+\varphi)- \Psi_{\rho}(\theta) \mathrm{d}\theta
  +  \int_{\theta_-}^{\theta_+} \delta(\theta) \mathrm{d}\theta \Big\} \\
  & = \frac{1}{\pi}\left\{\int_{\varphi}^{\pi+\varphi} - \int_0^{\pi} \right\}
   \Psi_{\rho}(\theta) \mathrm{d}\theta+  \frac{1}{\pi} \int_{\theta_-}^{\theta_+}
  \delta(\theta) \mathrm{d} \theta\\
   & = \frac{1}{\pi} \int_0^{\varphi} \Psi_{\rho}(\theta) + \pi -
   \Psi_{\rho}(\theta) \mathrm{d}\theta + \frac{1}{\pi} \int_{\theta_-}^{\theta_+}
   \delta(\theta) \mathrm{d} \theta\\
   &= \varphi +\frac{1}{\pi}
  \int_{\{\delta <0\}} \delta(\theta) \mathrm{d} \theta.
    \end{aligned}
  \end{equation*}
 Recall that the values $\theta_{\pm}=\theta_{\pm}(\rho,\varphi)$,
  depend on $\rho$, $\varphi$ and
  satisfy
  \begin{equation} \label{support}
    0 < \theta_-(\rho,\varphi) < \theta_+(\rho,\varphi) < \tfrac{\pi}{2},
    \quad \text{if} \; \mathrm{skew}(A(\rho,\varphi))>1
  \end{equation}
by the strict monotonicity of $\Psi_{\rho^{-1}}$.
  Therefore, \eqref{deltapos} implies $\hat\theta_1(A(\rho,\varphi)) < \varphi$.
  \item[(iv)]
  Next we consider $\frac{\varphi}{\pi}= \frac{p}{q}$ for some natural numbers
  $0 < p \le q$. From the definition \eqref{Tconj} of $T_{\rho,\varphi}$ we obtain
  \begin{align*}
    T_{\rho,\varphi}^q = \psi_{\rho} \circ \tau_{\varphi}^q \circ \psi_{\rho}^{-1}=
    \psi_{\rho} \circ \tau_{p \pi} \circ \psi_{\rho}^{-1}= \tau_{p \pi},
  \end{align*}
  where we used $ \psi_{\rho}\circ \tau_{p \pi} = \tau_{p \pi} \circ \psi_{\rho}$
  due to \eqref{liftPsi}.
  Moreover, translation invariance of the metric $d_1$ yields that the
  function $g$ in \eqref{chooseg} is $\pi$-periodic:
  \begin{align*}
     g(\tau_{\pi}x)&
     = \min(d_1(\tau_{\pi}x,T_{\rho,\varphi}(\tau_{\pi}x)),
            d_1(\tau_{2\pi}x,T_{\rho,\varphi}(\tau_{\pi}x)))\\
    & = \min(d_1(\tau_{\pi}x,\tau_{\pi}(T_{\rho,\varphi}x)),d_1(x,\tau_{\pi}(T_{\rho,\varphi}x)))\\
     & = \min(d_1(x,T_{\rho,\varphi}x),d_1(\tau_{\pi}x,\tau_{2\pi}(T_{\rho,\varphi}x)))\\
    & =\min(d_1(x,T_{\rho,\varphi}x),d_1(\tau_{\pi}x,T_{\rho,\varphi}x))= g(x).
  \end{align*}
  Therefore, decomposing $n=kq+r$ with $k \ge 0, 1 \le r \le q$ leads to
  \begin{align*}
    \frac{1}{n} \sum_{j=1}^n g(T_{\rho,\varphi}^{j-1}x) & = \frac{1}{n}\Big(
    \sum_{\nu=0}^{k-1} \sum_{\ell=1}^q g(T_{\rho,\varphi}^{\nu q + \ell -1}x)
    + \sum_{\ell=1}^r g(T_{\rho,\varphi}^{kq+\ell-1}x)\Big) \\
    &= \frac{1}{n} \Big(
    \sum_{\nu=0}^{k-1} \sum_{\ell=1}^q g(\tau_{\nu p \pi}(T_{\rho,\varphi}^{\ell -1}x ) )
    + \sum_{\ell=1}^r g(\tau_{kp \pi}(T_{\rho,\varphi}^{\ell-1}x))\Big)\\
    & = \frac{k}{kq+r} \sum_{\ell=1}^q g(T_{\rho,\varphi}^{\ell-1}x) + \frac{1}{n}
    \sum_{\ell=1}^r g(T_{\rho,\varphi}^{\ell-1}x)
    \stackrel{n\to\infty}{\xrightarrow{\hspace*{1cm}}} \frac{1}{q} \sum_{\ell=1}^{q}
    g(T_{\rho,\varphi}^{\ell-1}x).
    \end{align*}
  Maximizing over $x = \theta_0+ 2 \pi \Z$ and using \eqref{bjexpress} then
  proves case (iv) of formula \eqref{finalform}.
  \item[(iii)]
  It remains to show that the maximum in case $p=1$ is given by $\varphi=\frac{\pi}{q}$.
  In this case we have $T_{\rho,\varphi}^q=\tau_{\pi}$ and thus equation \eqref{bjexpress}
  yields  for all $\theta_0 \in  [0,\frac{\pi}{2}]$
   \begin{align} \label{estsummin}
  \frac{1}{q} \sum_{\ell=1}^{q}
  g(T_{\rho,\varphi}^{\ell-1}x)  = \frac{1}{q}
     \sum_{\ell=1}^{q}\chi(\theta_j - \theta_{j-1}) \le \frac{1}{q}
     \sum_{j=1}^q (\theta_j -\theta_{j-1})=
  \frac{1}{q}(\theta_q - \theta_0)= \frac{\pi}{q} = \varphi.
  \end{align}
  We set $\theta_0= \theta_-$ and recall that
  $\theta_-$ has been chosen such that $\theta_1- \theta_0=\frac{\pi}{2}$.
  Since $\theta_j - \theta_{j-1}\ge 0$ sum up to $\pi$ there is no
  index $j>1$ with $\theta_j - \theta_{j-1}> \frac{\pi}{2}$, hence equality
  holds in \eqref{estsummin}.
  \end{itemize}

\end{proof}
Theorem \ref{prop5.1} and Figures \ref{reso}, \ref{reso3d} show
that angular values
can be quite sensitive to parametric perturbations. For example,
approximate a rational multiple $\varphi_0=\frac{\pi}{q},q \ge 2$ by irrational
multiples $\varphi$ of $\pi$ for some value $\rho$ with
$\mathrm{skew}(A(\rho,\varphi_0)) > 1$.
Then the formula \eqref{finalform} and the relations \eqref{deltapos},
\eqref{support} imply
\begin{equation} \label{lowersemi}
\liminf_{\varphi\to\varphi_0} \hat\theta_1(A(\rho,\varphi))
= \varphi_0+ \int_{\theta_-(\rho,\varphi_0)}^{\theta_+(\rho,\varphi_0)} \delta(\theta)
d \theta < \varphi_0=
\hat\theta_1(A(\rho,\varphi_0)).
\end{equation}
Hence the angular value $\hat \theta_1$ is not lower semi-continuous.
However, angular values may still be upper semi-continuous.
\subsection{An algorithm for computing first angular values}\label{algorithm}
Based on Theorem \ref{prop5.1} and on the results from Section \ref{sec5},
we propose the following numerical scheme for autonomous systems; see
Algorithm \ref{algo}.
In case $A\in\R^{d,d}$ is invertible and satisfies the assumption
\eqref{eqeigencond}
our numerical approach is justified by Theorem \ref{cor2.4}.

\begin{algorithm}[hbt]
\caption{Computation of $\hat\theta_1(A)$ \label{algo}}
\begin{itemize}
\item [(1)] Compute a real Schur decomposition of $A$
$$
A = QSQ^\top,\quad S=
\begin{pmatrix}
  \Lambda_1 & & \star\\
  & \ddots &\\
  0&& \Lambda_k
  \end{pmatrix},\quad Q\in\R^{d,d} \text{ orthogonal, cf.\ \eqref{eq2.8},}
$$
 such that the diagonal blocks
  $\Lambda_1,\dots,\Lambda_\ell$ are two-dimen\-sional and
  $\Lambda_{\ell+1},\dots \Lambda_k$ are reals (such a Schur decomposition always
  exists, see \cite[Theorem 2.3.4]{HJ2013}). Let $\lambda_i,
  \bar{\lambda}_i$ be the eigenvalues of
  $\Lambda_i$, $i=1,\ldots \ell$ and let $A_i=\lambda_i=\Lambda_i$ for
  $i=\ell+1,\dots,k$.
\item [(2)] Compute $\hat \theta_1(A)$ as follows
\begin{algorithmic}

\If{$ \exists i\neq j \in \{\ell+1,\dots,k\}: \lambda_i = -\lambda_j$}
\State  $\hat \theta_1(A) = \frac \pi 2$
\Else
\For {$i=1,\dots,\ell$}
\If {$i=1$}
\State $A_1 = \Lambda_1$
\Else
\State Compute a reordered  Schur decomposition of $A$ using
\texttt{ordschur},
\State {such that the upper left $2 \times 2$-block has the
eigenvalue $\lambda_i$.}
\State {Denote this upper left $2\times 2$-block by $A_i$.}
\EndIf
\State Determine $\varphi_i$, $\rho_i$ such that $A_i = |\lambda_i|
   A(\rho_i,\varphi_i)$.
\State Compute $\hat \theta_1(A_i) = \theta_1(A(\rho_i,\varphi_i))$
using Theorem \ref{prop5.1}.
\EndFor
\State $\hat \theta_1(A) = \max\{0, \hat \theta_1(A_i), i =1,\dots,\ell\}$.
\EndIf
\end{algorithmic}
\end{itemize}
\end{algorithm}
As explained in Remark \ref{remopposite}, the Schur decomposition
of $A$ is reordered several times to obtain the diagonal blocks
$A_i$, $i=2,\ldots,\ell$. We apply the \textsc{MATLAB}
command \texttt{ordschur} for this task.\footnote{We are not aware of
  a \textsc{MATLAB} procedure that computes the block decomposition
  \eqref{specialblock} directly.}
The value
$\theta_1(A(\rho_i,\varphi_i))$ is calculated for $i=1,\dots,\ell$ with
Theorem \ref{prop5.1}. Note that the fourth case in \eqref{finalform}
results in a one-dimensional optimization problem which we solve
with a derivative-free method implemented
in the \textsc{MATLAB}-routine \texttt{fminbnd}.

\subsection{Numerical experiments}
\label{sec5.2}
Let us apply Algorithm
\ref{algo} to autonomous models with increasing complexity and
dimension. The example in Section \ref{4Dillu}
particularly illustrates the need for reordering the Schur
decomposition in Algorithm \ref{algo}.

\subsubsection{Block-diagonal examples}
\label{sec5.2.1}
We begin with autonomous examples which have a block-diagonal structure.
Due to the invariance of corresponding coordinate spaces, one can read
off first angular values without the need for reordering Schur
decompositions.
Furthermore, the
numerical calculation can be done with high
accuracy and even exactly when these expressions are evaluated
symbolically.
Therefore, approximation errors are not
discussed in Table \ref{skewsmall}.

\begin{table}[hbt]
\begin{center}
\begin{tabular}{c||c|c||c|c||c}
$A$ & $A_1$ & $\hat\theta_1(A_1)$ & $A_2$ &
$\hat\theta_1(A_2)$ & $\hat\theta_1(A)$\\[1mm]\hline &&&&& \\[-4mm]
 $\left(\begin{smallmatrix} 2 & 0\\ 0 & 3 \end{smallmatrix}\right)$
& $2$ & $0$ & $3$ & $0$ & $0$ \\[1mm]\hline &&&&& \\[-4mm]
$\left(\begin{smallmatrix} 2 & 0\\ 0 & -2 \end{smallmatrix}\right)$
& $2$
& $0$ & $-2$ & $0$ &$\frac \pi 2$ \\[1mm]\hline &&&&& \\[-4mm]
$\left(\begin{smallmatrix} c & s\\ s & -c \end{smallmatrix}\right)$
& $1$ & $0$ & $-1$ & $0$ & $\frac \pi 2$ \\[1mm]\hline &&&&& \\[-4mm]
$\left(\begin{smallmatrix} c & -s & 0\\ s & c & 0\\ 0 & 0 & -2 \end{smallmatrix}\right)$
& $\left(\begin{smallmatrix} c & -s\\ s & c \end{smallmatrix}\right)$
& $\varphi$ & $-2$ & $0$ &$\varphi$
\end{tabular}
\end{center}
\caption{Angular values of block-diagonal examples.
  We abbreviate $c= \cos(\varphi)$, $s=\sin(\varphi)$, $0<\varphi <
  \frac \pi 2$. \label{skewsmall}}
 \end{table}

\subsubsection{An illustrative four-dimensional example}\label{4Dillu}
Using the normal form \eqref{rhomatrix} we consider a $4 \times 4$-matrix which
has already Schur form
\begin{equation*}\label{schur1}
A =
\begin{pmatrix}
A(1,\frac 12) & I_2\\ 0 & \eta A(\frac 12, 1.4)
\end{pmatrix} =
\begin{pmatrix} \Lambda_1 & I_2 \\ 0 & \Lambda_2 \end{pmatrix}
\end{equation*}
with $\eta = 1.2$.
For this matrix we have $\theta_1(\Lambda_1)= \frac{1}{2}$ and
$\theta_1(\Lambda_2) = 1.128$.
The algorithm sets $A_1 = \Lambda_1$  and reorders the
Schur form so that the eigenvalues of $\Lambda_2$ appear in the first
$2 \times 2$-block:
$$
Q^T A Q =
\begin{pmatrix}
\eta A(0.7493, 1.4) & \star\\
0 & A(0.6142, \frac 12)
\end{pmatrix},\quad \text{whereby } A_2 = \eta A(0.7493, 1.4).
$$
From Theorem \ref{prop5.1} the algorithm then finds $\theta_1(A_2) = 1.355$
and thus we have
$$
\max(\theta_1(\Lambda_1),\theta_1(\Lambda_2))= 1.128  < 1.355=
\max(\theta_1 (A_1), \theta_1(A_2)) = \theta_1(A).
$$
This example illustrates that first angular values can
generally not be computed from the diagonal blocks of a single Schur
decomposition.

\subsubsection{High dimensional examples}
We illustrate the performance of our algorithm for three
matrices of dimension $10^2$, $10^3$ and $10^4$.
Their entries are uniformly distributed in $(0,1)$
and generated by the \textsc{MATLAB} random number generator
 initialized with \texttt{rng(1)}.
 Table \ref{trand} documents our numerical results. We measure the time for the
 initial Schur
decomposition and the maximal time for one reordering with
\texttt{ordschur}.
It turns out that the computing time for one reordering step grows
linearly with the position $i$ of the block $\Lambda_i$ in the Schur form.
The numerical experiments are carried out on an
Intel Xeon W-2140B CPU with \textsc{MATLAB 2020a}.

\begin{table}
\begin{tabular}{c||c||c||c|c}
$\dim(A)$ & $\hat \theta_1(A)$ & number of $2\times 2$ blocks &
initial Schur & max reordering\\\hline
$10^2$ & $1.5370$ & $45$ & $0.0045$ sec & $0.0001$ sec\\\hline
$10^3$ & $1.5643$ & $488$ & $0.43$ sec & $0.013$ sec\\\hline
$10^4$ & $1.5705$ & $4958$ & $105$ sec & $1.18$ sec
\end{tabular}
\caption{First angular values of three random matrices:
number of $2\times 2$ blocks, analyzed by Algorithm \ref{algo};
computing time for initial Schur
decomposition; maximal time for one reordering Schur step. \label{trand}}
\end{table}

Applying Algorithm \ref{algo} to the $10^4$-dimensional random matrix
yields $\ell = 4958$ two-dimensional blocks for which we calculate
the first angular value, using Theorem \ref{prop5.1}.
Then there are $84$ real eigenvalues of different modulus
leading to a vanishing angular value. Summing up we obtain
$k=5042$ one- resp.\ two-dimensional blocks $A_i$.
For the presentation in Figure \ref{hist}, these blocks are rearranged, such that
\begin{equation}\label{anordnung}
\theta_1(A_i) \le \theta_1(A_{i+1}) \text{ for all } i = 1,\dots,k-1.
\end{equation}
The left panel shows a plot of the pairs $(i,\theta_1(A_i))_{i=1,\dots,k}$. Except
for an  initial ramp due to the $84$ real eigenvalues, the plot
suggests an almost uniform distribution of angular values. This is
also confirmed by
the corresponding  histogram shown in the right panel.
As expected, further experiments show no correlation between the modulus
$|\lambda_i|$  of the eigenvalue and the angular value $\theta_1(A_i)$
of the corresponding $2 \times 2$ matrix $A_i$.

\begin{figure}[H]
 \begin{center}
   \includegraphics[width=0.95\textwidth]{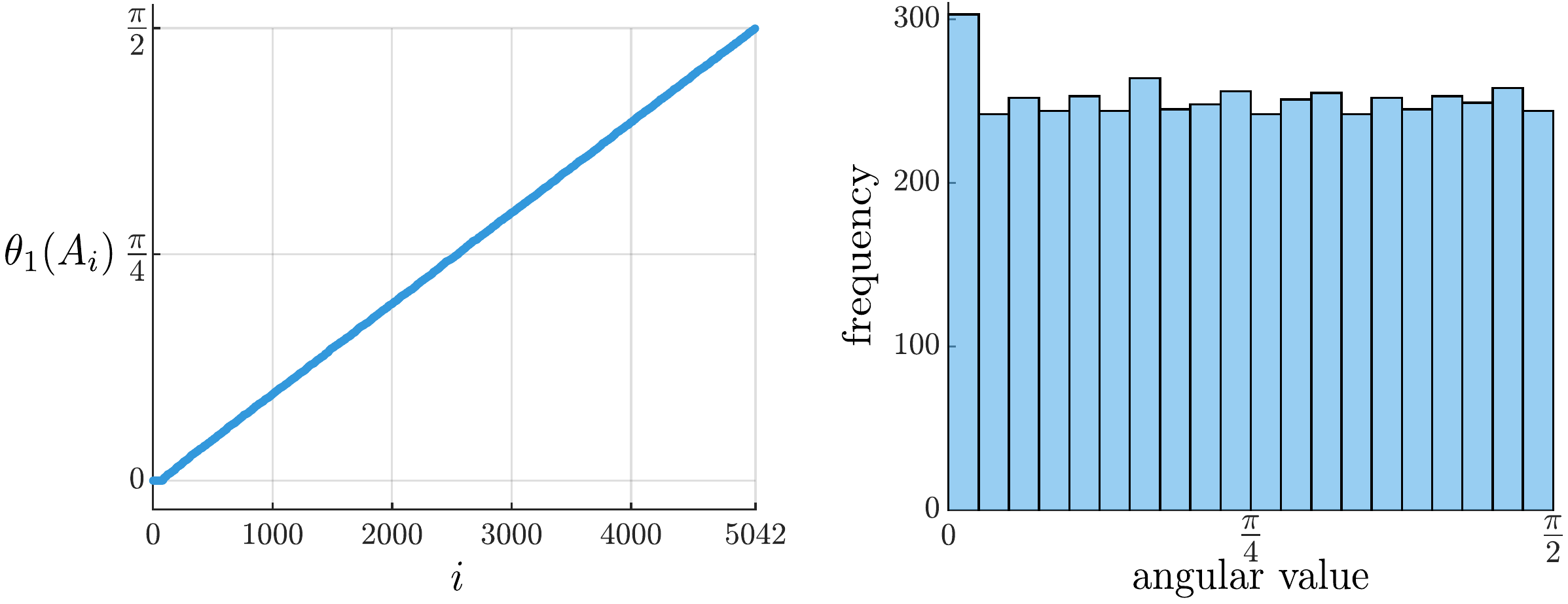}
 \end{center}
 \caption{\label{hist} Left: sorted angular values $\theta_1(A_i)$, see
   \eqref{anordnung}, of a $10^4$-dimensional random matrix; right: histogram
   of angular values.}
\end{figure}

\section*{Outlook}

The approach of this article lends itself to several extensions and
further problems which we discuss in  the following.
\subsection*{Numerics for nonlinear systems}
The content of Sections \ref{sec2} and \ref{sec3} applies to linear difference equations arising from variational equations of nonlinear dynamical systems of the form
 \begin{equation}\label{var2}
u_{n+1} = DF(\xi_n)u_n,\quad n\in\N_0,
 \end{equation}
 where $\xi_{n+1}=F(\xi_n),n\in \N_0$ is a bounded trajectory of a
 nonlinear diffeomorphism $F:\R^d \to \R^d$.
To numerically estimate the angular values of these variational equations, one has to extend the numerical algorithm for the autonomous case (Section \ref{sec6}) to general nonautonomous systems \eqref{diffeq} in dimension $d \ge 2$ and to angular values of arbitrary type $s\ge1$.
This is the topic of the forthcoming work \cite{BeHu20X}, which uses a reduction procedure to generalize
Theorem \ref{cor2.4} from eigenvalues and eigenspaces to the dichotomy spectrum and stable and unstable fibers (see \cite{ss78}, \cite{ak01}, \cite{p16}).
The algorithm is applied to variational equations.
 In fact, Figure \ref{hen0} shows
 for the well-known H\'enon map (\cite{h76}) the succession of those subspaces
 which lead to the outer angular value for the linearized equation \eqref{var2}.

\subsection*{Continuous-time systems}
   It is natural to set up a theory of angular values for continuous-time systems.  Such an extension requires one to
   handle derivatives of principal angles between moving subspaces
      both theoretically and numerically.  By Proposition \ref{prop1}
   principal angles can be computed from singular
   values of matrices, which employ orthogonal bases of
   subspaces---obtained by a QR-decomposition, for example.
   Thus one is led to the well-known problem
   of computing smooth singular value and QR-decompositions
   which has been studied extensively in the literature; see \cite{bbmn91},
   \cite{de99}, \cite{de06}. One approach is to solve suitable
   differential equations for smooth decompositions (\cite{de99}), and this
   has turned out to be efficient with numerical methods for
   Lyapunov exponents; see \cite[Section 4]{de06}.
      A corresponding analysis of the angle function from Section \ref{sec2}
   and a resulting algorithm are currently under investigation.

\subsection*{Perturbation theory}
      As noted after Theorem \ref{prop5.1} (see \eqref{lowersemi} and
   Figures \ref{reso}, \ref{reso3d})  angular values can be quite
   sensitive to parametric
   perturbations. In particular, without further assumptions they are
   not lower semi-continuous. It is an open question whether they are
   still upper
   semi-continuous in general. More specifically, it will be desirable to
   have criteria
   which ensure continuity of angular values. For
    Lyapunov exponents in continuous time such criteria are well known; see \cite[Ch.IV,V]{Ad1995},
   \cite[Section 2]{de06}. 

\subsection*{Regularity theory}
   On the one hand, the examples in Section \ref{sec3.2}
   demonstrate that $\liminf$ and $\limsup$ generally do not coincide
   for outer angular values. 
   On the other hand, the $\liminf$ and $\limsup$ do coincide for the inner
   angular values and the uniform angular values in random dynamical systems
   and in all cases for autonomous dynamical systems; see Sections \ref{sec3} and \ref{sec5}.
   It will be of interest to
   identify a larger class of systems for
   which the corresponding limits exist. 
   This will provide a weak analogy to the class of regular continuous-time
   dynamical systems that have sharp Lyapunov exponents;
   see \cite[Theorem 3.9.1]{Ad1995}.

\section*{Acknowledgments}
WJB thanks
the CRC 701 `Spectral Structures and Topological Methods in Mathematics' at
Bielefeld University  and the School of
Mathematics and Statistics at the University of New South Wales
 for supporting his visit to GF in 2017, where this research
 was initiated. Further support by the CRC 1283 `Taming uncertainty
 and profiting from randomness and low regularity in analysis,
 stochastics and their
 applications' is gratefully acknowledged.
 GF is partially supported by an ARC Discovery Project.
TH thanks the Research Centre for Mathematical
Modelling ($\text{RCM}^2$) and the Faculty of Mathematics at Bielefeld
University for supporting this research project.
 The authors thank the referees
for pointing them to various related topics and references from the
literature.


\bibliographystyle{abbrv}

\newpage
\setcounter{section}{0}
\renewcommand{\thesection}{\Roman{section}}
\section*{Supplementary materials}
\label{app}
\section{Variational characterization of maximum principal angle --
Proof of\\Proposition \ref{Lemma2}}
\label{app1}
\begin{proof}
  We use the following elementary fact
  \begin{equation} \label{eq1.1}
    \max_{ y \in \R^j, \|y\|=1} v^{\top}y = \|v\| \quad \forall \; v \in \R^j, v \neq 0,
  \end{equation}
  with the maximum achieved at $y = \frac{1}{\|v\|} v$ for $v \neq 0$.
  Consider $w \in W$ with $\|w\|=1$ and
  $w^{\top}w_{\ell}=0$, $\ell=j+1,\ldots,s$.  By Proposition
  \ref{prop1} there exists $b \in \R^s$ such that
  \begin{align} \label{wrepresent}
    w = Qb = QZZ^{\top}b= \begin{pmatrix}  w_1 & \cdots & w_{s} \end{pmatrix}
    Z^{\top} b.
  \end{align}
  Since $\|w\|=1$ and $w^{\top}w_\ell=0$ for $\ell=j+1,\ldots,s$ we obtain
  the  partitioning
    \begin{align*}
      Z^{\top}b = \begin{pmatrix} b^{I} \\ 0 \end{pmatrix}, b^{I} \in
      \R^j, \|b^{I}\|=1, \quad
      Z = \begin{pmatrix} Z^{I} & Z^{II} \end{pmatrix}, Z^{I} \in \R^{d,j}.
    \end{align*}
    By \eqref{eq1.1} this implies for all $v \in \R^d$, $ v\neq 0$
    \begin{align} \label{eq1.2}
      \max_{\substack{w\in W, \|w\|=1 \\ w^{\top} w_{\ell} =0,
          \ell=j+1,\ldots,s}}  v^{\top} w= \max_{b^{I} \in
      \R^j,\|b^{I}\|=1}v^{\top}Q Z^{I} b^{I} = \|Z^{I \top}Q^{\top}v
      \|.
      \end{align}
    In  a similar way, for $v \in \R^d$ with $\|v\|=1$ and $v^{\top}v_{\ell}=$ for
    $\ell=j+1,\ldots,d$ we find vectors $a \in \R^d$, $a^{I} \in \R^j$ such
    that
    \begin{align*}
      v= Pa = P Y Y^{\top}a =\begin{pmatrix}  v_1 & \cdots & v_{s} \end{pmatrix}
      Y^{\top}a, \quad Y^{\top}a =\begin{pmatrix} a^{I} \\ 0 \end{pmatrix},\quad
      \|a^I\|=1.
    \end{align*}
    Using this and \eqref{eq1.b} in \eqref{eq1.2} and setting
    $\Sigma^I = \mathrm{diag}(\sigma_1, \ldots, \sigma_j)$ leads to
    \begin{align*}
     & \min_{\substack{v\in V, \|v\|=1 \\  v^{\top}v_{\ell} =0,
      \ell=j+1,\ldots,s}}\
       \max_{\substack{w\in W, \|w\|=1 \\ w^{\top} w_{\ell} =0,
          \ell=j+1,\ldots,s}}  v^{\top} w\\
       = &
       \min_{a^I \in \R^j , \|a^I\|=1} \|Z^{I \top} Q^{\top}PY
       \begin{pmatrix} a^I \\ 0 \end{pmatrix}\|
       = \min_{a^I \in \R^j , \|a^I\|=1}  \|Z^{I \top} Z \Sigma \begin{pmatrix}
         a^{I} \\ 0 \end{pmatrix}\| \\
       = &\min_{a^I \in \R^j , \|a^I\|=1} \|Z^{I \top}
       Z^I \Sigma^I a^I \| = \min_{a^I \in \R^j , \|a^I\|=1}\| \Sigma^I a^I \|.
    \end{align*}
    Since $\sigma_1 \ge \ldots \ge \sigma_j$ the last minimum is
    $\sigma_j$ and it is achieved at the $j$-th unit vector
    $a^I=e^I_j\in \R^j$. With Proposition \ref{prop1}
    this yields the minimizer $v= PYe_j =v_j$, where
    $e_j = \begin{pmatrix} e_j^I \\ 0 \end{pmatrix} \in \R^d$. Returning to
    \eqref{eq1.2} we obtain the maximizer
    $b^I=\frac{1}{\sigma_j} Z^{I \top}Q^{\top} v_j$  where $\sigma_j =
    \|Z^{I \top}Q^{\top} v_j\|$ is the maximum value. By \eqref{wrepresent}
    and \eqref{eq1.d} this
    leads to the maximizer of the original problem
\begin{align*}
  w = &\, \frac{1}{\sigma_j}QZ \begin{pmatrix} Z^{I \top} Q^{\top} v_j
    \\0 \end{pmatrix}= \frac{1}{\sigma_j}QZ \begin{pmatrix} Z^{I \top}
    Q^{\top} PY e_j \\0
  \end{pmatrix} \\
  = & \, \frac{1}{\sigma_j}QZ \begin{pmatrix} Z^{I \top} Z \Sigma e_j \\0
  \end{pmatrix} = \frac{1}{\sigma_j}QZ \begin{pmatrix}  \sigma_j e_j^I \\
    0 \end{pmatrix} = w_j.
    \end{align*}
Finally, note that taking $\arccos$ reverses $\min$ and $\max$ in \eqref{A1}.
\end{proof}

\section{Uniform almost periodicity -- Proof of Lemma \ref{estap}}\label{app2}
\begin{proof}
Let $\eps >0$. By the  uniform almost periodicity there exists a
$P\in\N$ such that for every $V\in\cV$ and
each $k\in\N_0$ we find a $p_k\in\{k,\dots,P+k\}$ (which may depend on $V$) with
\begin{equation}\label{apbasic}
\|b_n(V)-b_{n+p_k}(V)\| \le \frac \eps 8 \quad \forall n\in\N.
\end{equation}
Let $b_{\infty}=\sup_{n,V}\|b_n(V)\|$ and $L = \lceil \frac {16}\eps P
b_\infty\rceil$. It follows for each $k\in\N_0$ that
\begin{equation}\label{finally}
\begin{aligned}
\Big\| \sum_{j=1}^L b_j(V) - \sum_{j=1}^L b_{j+k}(V)\Big\|
& \le \sum_{j=1}^L \|b_j(V)-b_{j+p_k}(V)\| + \Big\|\sum_{j=1}^L
      b_{j+p_k}(V)-\sum_{j=1}^L b_{j+k}(V)\Big\| \\
&\le \sum_{j=1}^L \frac \eps 8 + 2 P b_\infty \le  L \frac \eps 4.
\end{aligned}
\end{equation}

Let $N= \lceil \frac 8\eps L b_\infty\rceil$ and decompose $n\ge m\ge N$
modulo $L$, i.e.\
\begin{equation*}
m = \ell_m L + r_m,\ 0\le r_m<L,\quad  n = \ell_n L + r_n,\ 0\le r_n<L.
\end{equation*}
For $c(V):= \sum_{j=1}^L b_j(V)$ we obtain from \eqref{apbasic} and
\eqref{finally} for each $k\in \N_0$
the estimates
\begin{align*}
\Big\|\sum_{j=1}^{L \ell_n} b_j(V) - \ell_n c(V)\Big\|
 &  \le \sum_{i=1}^{\ell_n} \Big\| \sum_{j=1}^L b_{j+(i-1)L}(V) -
     c(V)\Big\|
\le  \ell_n L \frac \eps 4,\\
\Big\| \frac{\ell_n}{n} c(V) - \frac{\ell_m}{m} c(V)\Big\|
&= \Big|\frac{\ell_n r_m - \ell_m r_n}{nm}\Big| \|c(V)\|\le
\frac {\ell_n L}{nm} \|c(V)\|
\le \frac 1m \|c(V)\| \le \frac{L}{N} b_{\infty}  \le \frac \eps 4.
\end{align*}
 Combining these estimates, we find for every $k\in\N_0$
\begin{align*}
&\Big\|\frac 1n \sum_{j=1}^nb_j(V) - \frac 1m \sum_{j=1}^m
                 b_{j+k}(V)\Big\|\\
& \le \Big\|\frac 1n \sum_{j=1}^{L\ell_n} b_j(V) - \frac 1m
   \sum_{j=1}^{L\ell_m}b_{j+k}(V)\Big\|
+ \Big\|\frac 1n \sum_{j=L\ell_n+1}^{L\ell_n+r_n} b_j(V) - \frac 1m
   \sum_{j=L\ell_m+1}^{L\ell_m+r_m}b_{j+k}(V)\Big\|\\
&\le \Big\|\frac 1n \sum_{j=1}^{L\ell_n} b_j(V) - \frac{\ell_n}{n}
                                                       c(V)\Big\|
+ \Big\|\frac{\ell_n}{n} c(V) - \frac{\ell_m}{m} c(V)\Big\|\\
&+ \Big\| \frac{\ell_m}{m} c(V) -  \frac 1m
   \sum_{j=1}^{L\ell_m}b_{j+k}(V)\Big\|
+ \Big\|\frac 1n \sum_{j=L\ell_n+1}^{L\ell_n+r_n} b_j(V) - \frac 1m
   \sum_{j=L\ell_m+1}^{L\ell_m+r_m}b_{j+k}(V)\Big\|\\
&\le \frac{\ell_n L}{n} \frac \eps 4 + \frac \eps 4 + \frac{\ell_m L}{m}
\frac \eps 4 + \frac{2 L}{N} b_\infty \le \eps.
\end{align*}
\end{proof}

\section{A matrix with a single eigenvalue -- Proof of Proposition
  \ref{prop2.5}}\label{app3}
\begin{proof} By Lemma \ref{lem2.2} and Corollary \ref{propauto} it suffices
  to show that $\bar{\theta}_1(A)=0$. Further, by Proposition \ref{prop2}
  we can assume $\lambda=1$ and $A$ to be in (real) Jordan normal form
  \begin{equation} \label{jordanform}
  \begin{aligned}
    A& =\mathrm{diag}(\Lambda_1,\ldots,\Lambda_k), \quad \Lambda_{\ell}=
    I_{d_{\ell}}+ E_{\ell}\in \R^{d_{\ell},d_{\ell}}, \\
    (E_{\ell})_{ij}& = \delta_{i+1,j},
    1\le i,j \le d_{\ell}, \ell=1,\ldots,k.
  \end{aligned}
  \end{equation}
  Consider first the case $k=1$ and drop the index $\ell$. For a
  vector $v \in \R^d, v \neq 0$ let $d_{\star}+1 = \max\{j\in
  \{1,\ldots,d\}: v_j \neq 0\}$ and
  assume w.l.o.g.\ $v_{d_{\star}+1}=1$. Further, we  define vectors $v^j, j \in \N_0$ and
  polynomials $q_i$ of degree $d_{\star}+1-i$ for $i=1,\ldots,d_{\star}+1$ by
  \begin{equation} \label{defpoly}
    v^j := A^j v = \sum_{\nu=0}^{d_{\star}} {j\choose \nu} E^{\nu} v, \quad
    q_i(j) = (v^j)_i = \sum_{\nu=0}^{d_{\star}+1-i} { j \choose \nu} v_{i+\nu}.
  \end{equation}
  If $d_{\star}=0$ then we have $v^j=v$ for all $j\in \N_0$, hence all angles
  $\ang(v^{j},v^{j+1})=0$ vanish and do not
  contribute to the supremum in \eqref{dinner}. Therefore, we can
  assume $d _{\star}\ge1$. Let $z_{\nu} \in \C$, $\nu=1,\ldots,d_{\star}$ denote the roots of $q_1$
  (repeated according to multiplicity) and set $x_{\nu}= \mathrm{Re}(z_{\nu})$.
  Our goal is to show that there exists a constant $C_{\star}>0$ independent of
  $v$ such that for all $j \in \N_0$
  \begin{equation} \label{estsing}
    \ang(v^{j},v^{j+1}) \le
    \frac{C_{\star}}{\min_{\nu=1,\ldots,d_{\star}}|j - x_{\nu}|},
    \quad \text{if } \min_{\nu=1,\ldots,d_{\star}}|j - x_{\nu}| \ge
    1.
  \end{equation}
  Suppose this has been shown, then the set
  $M=\bigcup_{\nu=1,\ldots,d_{\star}}(x_{\nu}-1,x_{\nu}+1)$ contains at most
    $2d_{\star}$  natural numbers and \eqref{estsing} leads to the estimate
  \begin{align*}
    \frac{1}{n}\sum_{j=0}^{n-1} \ang(v^{j},v^{j+1}) & \le \frac{d_{\star} \pi}{n}
    + \frac{1}{n} \sum_{j\in \{0,\ldots,n-1\}\setminus M}\frac{C_{\star}}{
      \min_{\nu=1,\ldots,d_{\star}}|j - x_{\nu}|} \\
    & \le \frac{d \pi}{n} + \frac{C_{\star}}{n}
    \sum_{j\in \{0,\ldots,n-1\}\setminus M}\sum_{\nu=1}^{d_{\star}} \frac{1}{|j-x_{\nu}|}\\
    & \le \frac{d \pi}{n} + \frac{C_{\star}}{n}2 d (\log(n) +1).
        \end{align*}
  In the last step we used the standard estimate of the harmonic sum.
  The right-hand side is independent of $v$, taking the supremum over $v$
  and letting $n \to \infty$ shows $\bar{\theta}_1(A)=0$.

  For the proof of \eqref{estsing} let us first notice the relation
  $v^{j+1}-v^j = (A-I_d)v^j = E v^j$. By \eqref{defpoly} this leads to the
  recursion (setting $q_{d_{\star}+2}\equiv 0$)
  \begin{align} \label{polyrec}
    q_i(j+1)-q_i(j) = q_{i+1}(j), \quad j \in \N_0,\; i=1,\ldots,d_{\star}+1
  \end{align}
  and to the expression
  \begin{equation} \label{sumdiffsquare}
    \|v^{j+1}-v^j\|^2  = \sum_{i=1}^{d_{\star}+1} q_{i+1}(j)^2 = \|v^j\|^2 - q_1(j)^2
    \le \|v^j\|^2.
  \end{equation}
  If $q_1(j)\neq 0$ then Lemma \eqref{lem0} (i) applies and yields
  \begin{equation} \label{estang2}
    \begin{aligned}
      \ang(v^{j},v^{j+1}) & \le \tan \ang(v^{j},v^{j+1}) \le
      \left[\frac{\|v^j\|^2 -q_1(j)^2}{q_1(j)^2}\right]^{1/2}\\
      &= \left[\sum_{i=2}^{d_{\star}+1}
      \frac{q_i(j)^2}{q_1(j)^2} \right]^{1/2} \le \sqrt{d_{\star}}
    \max_{i=2,\ldots,d_{\star}+1}\frac{|q_i(j)|}{|q_1(j)|}.
    \end{aligned}
  \end{equation}
    In view of the recursion \eqref{polyrec} and \eqref{estang2} it is
  sufficient to prove for some constant $C_2$, independent of $v$, and
 for all $\tau=0,\ldots,d_{\star}$ the estimate
  \begin{equation} \label{estq2q1}
    \left|\frac{q_2(j+\tau)}{q_1(j)}\right| \le
    \frac{C_2}{\min_{\nu=1,\ldots,d_{\star}}|j - x_{\nu}|}, \quad
    \text{if } \min_{\nu=1,\ldots,d_{\star}}|j - x_{\nu}| \ge 1.
  \end{equation}

  From $q_1(j) =\prod_{\nu=1}^{d_{\star}}(j- z_{\nu}) $ and \eqref{polyrec} we obtain
  by expanding products
  \begin{align*}
    \left|\frac{q_2(j+\tau)}{q_1(j)}\right| &= \prod_{\nu=1}^{d_{\star}}|j- z_{\nu}|^{-1}
    \Big| \prod_{\nu=1}^{d_{\star}}(j- z_{\nu}+\tau+1) -
    \prod_{\nu=1}^{d_{\star}}(j- z_{\nu}+\tau) \Big| \\
    &= \prod_{\nu=1}^{d_{\star}}|j- z_{\nu}|^{-1} \Big| \sum_{\substack{J
      \subset \{1,\ldots,d_{\star}\}\\ |J| < d_{\star}}} \prod_{\nu
      \in J}(j-z_{\nu}) \big[ (\tau+1)^{d_{\star}-|J|}-\tau^{d_{\star}-|J|} \big] \Big|\\
    & \le  \sum_{\substack{J \subset \{1,\ldots,d_{\star}\} \\ |J| \ge 1}}
    \big[ (\tau+1)^{|J|}-  \tau^{|J|} \big]\prod_{\nu \in J}|j-z_{\nu}|^{-1}.
  \end{align*}
  Because of $|j-z_{\nu}| \ge |j-x_{\nu}|$ and $|J|\ge 1$ we have
  \begin{align*}
    \prod_{\nu \in J}|j-z_{\nu}|^{-1}& \le
                                       \frac{1}{\min_{\nu=1,\ldots,d_{\star}}|j
                                       - x_{\nu}|}, \quad \text{if }
                                       \min_{\nu=1,\ldots,d_{\star}}|j
                                       - x_{\nu}| \ge 1,
  \end{align*}
  which proves \eqref{estq2q1}.

  The proof is easily adapted to the general Jordan form \eqref{jordanform}.
  Assertion \eqref{estsing} remains the same,
  but now we have block vectors $v^j=(v^j_1,\ldots, v^j_k)^{\top}$ and
  polynomials $q_{i,\ell}$,$i=1,\ldots,d_{\ell}$, $\ell=1,\ldots,k$.
  The formula \eqref{sumdiffsquare} turns into
  \begin{align*}
    \|v^{j+1}-v^{j} \|^2 & = \|v^j\|^2 - \sum_{\ell =1}^k q_{1,\ell}(j)^2
    =\sum_{\ell=1}^k( \|v^j_{\ell}\|^2 - q_{1,\ell}(j)^2),
  \end{align*}
  and the estimate \eqref{estang2} is modified by using
  \begin{align*}
    \frac{\|v^j\|^2 - \sum_{\ell =1}^k q_{1,\ell}^2(j)}{\sum_{\ell
    =1}^k q_{1,\ell}^2(j)} & \le \sum_{\ell=1}^k
                             \frac{\|v^j_{\ell}\|^2-q_{1,\ell}(j)^2}{q_{1,\ell}(j)^2}.
  \end{align*}
  The subsequent arguments remain unchanged.
    \end{proof}

\section{Proof of the Blocking Lemma \ref{lemblock}}\label{app4}
\begin{proof}
    By scaling $A$  and \eqref{eq:split} we can arrange that
  $ |\sigma(A_s)|, |\sigma(A_u^{-1})| < q <1$. Then there exists a constant
  $C_{\star}$ such that
  \begin{align} \label{AuAs}
    \|A_u^{-j}v_u\| \le C_{\star} q^j \|v_u\|, \quad
    \|A_s^{j}v_s\| \le C_{\star} q^j \|v_s\|, \quad \forall \  v_u \in X_u, v_s \in X_s.
  \end{align}
  Let us first consider outer angular values and
  decompose $v\in \R^d$ as $v=v_s+v_u$, $v_s \in X_s,v_u \in X_u$.

  The cases $v_s=0$ resp.\ $v_u=0$ immediately show that
  $\theta_1(A) \ge \max (\theta_1(A_s),\theta_1(A_u))$ holds for
  $ \theta_1\in \{ \hat{\theta}_1,\underaccent{\hat}{\theta}_1 \}$.
  To prove the converse, we assume $v_u \neq 0$ and obtain
  from the triangle inequality
\begin{equation}\label{estAtou}
  \begin{aligned}
    &\big| \frac{1}{n} \sum_{j=1}^n \ang(A^{j-1}v,A^{j}v)-
    \frac{1}{n} \sum_{j=1}^n \ang(A_u^{j-1}v_u,A_u^{j}v_u)\big| \le
     \frac{2}{n} \sum_{j=0}^n \ang(A^jv,A_u^jv_u).
  \end{aligned}
\end{equation}
  We show that the right-hand side converges to zero as $n \to \infty$
  for every $v$. Then the $\liminf$ and the $\limsup$ of the first two
  sums in \eqref{estAtou} agree and our assertion follows by
  taking the supremum over $v$.
  With $C_{\star}$, $q$ from \eqref{AuAs} there exist an index $j_{\star}=j_{\star}(v)$
  such that
  \begin{equation} \label{condvuvs}
    2 C_{\star}^2 q^{2j} \|v_s \| \le \sqrt{3} \|v_u\|, \quad \text{for all } j\ge j_{\star}.
  \end{equation}
  The estimate \eqref{eq1.6} in Lemma \ref{lem0} then shows for $j \ge j_{\star}$
\begin{equation} \label{estAvAvu}
  \begin{aligned}
    \ang(A^jv,A_u^j v_u)&  \le \tan\ang(A^jv,A_u^j v_u)
    \le \frac{\|A_s^jv_s\|}{\left(\|A_u^jv_u\|^2 - \|A_s^jv_s\|^2
      \right)^{1/2} } \\
    & \le \frac{C_{\star} q^j
      \|v_s\|}{\left(C_{\star}^{-2}q^{-2j}\|v_u\|^2 - C_{\star}^2
        q^{2j} \|v_s\|^2
      \right)^{1/2}}
   \le 2 C_{\star}^2 q^{2j} \frac{\|v_s\|}{\|v_u\|}.
  \end{aligned}
  \end{equation}
  Since the right-hand side is summable our conclusion follows.

  Next we analyze the inner angular values. By Corollary \ref{propauto} it
  suffices to consider $\theta_1=\bar{\theta}_1$.
   As above, Definition \ref{defangularvalues} implies the estimate
   $\theta_1(A) \ge \max (\theta_1(A_s),\theta_1(A_u))$,
  and it remains to prove the converse.
    From \eqref{condvuvs} and \eqref{estAvAvu} we infer that for each
  $v=v_s+v_u$ with $v_u \neq 0$ the following index exists
  \begin{equation*} \label{defkstar}
    k_{\star} =k_{\star}(v)=  \min \{ j \in \N: \|A^j v_s\| \le \|A^jv_u\|\}.
  \end{equation*}
  Further choose $j_{\star}$ such that $2 C_{\star}^2 q^{2j_{\star}} \le \sqrt{3}$.
  Then \eqref{condvuvs} holds for $A_s^{k_{\star}}v_s,A_u^{k_{\star}}v_u$ instead
  of $v_s,v_u$ and the estimate \eqref{estAvAvu} yields
  \begin{align*}
    \ang(A^jv,A^jv_u) \le 2 C_{\star}^2 q^{2(j-k_{\star})} \quad \text{for }
    j - k_{\star} \ge j_{\star}.
  \end{align*}
  We use $\|A_u^{k_{\star}-1}v_u\| \le \|A_s^{k_{\star}-1}v_s\|$, \eqref{AuAs} and
  Lemma \ref{lem0}  to derive a corresponding estimate
   of angles to the stable part for $j \le k_{\star}-j_{\star}-1$:
  \begin{equation*}
    \begin{aligned}
      \ang(A^jv,A^jv_s) & \le \tan \ang(A^jv,A^jv_s) \\
      & \le \frac{\|A_u^{j-k_{\star}+1}(A_u^{k_{\star}-1}v_u) \|}{\left(
        \|A_s^{j-k_{\star}+1}A_s^{k_{\star}-1}v_s \|^2 -
        \|A_u^{j-k_{\star}+1}(A_u^{k_{\star}-1}v_u) \|^2 \right)^{1/2}} \\
      & \le \frac{C_{\star}q^{k_{\star}-j-1}\|A_u^{k_{\star}-1}v_u\|}
      {\left(C_{\star}^{-2}q^{-2(k_{\star}-j-1)}\|A_s^{k_{\star}-1}v_s \|^2 -
        C_{\star}^2q^{2(k_{\star}-j-1)}\|A_s^{k_{\star}-1}v_s\|^2 \right)^{1/2}}\\
      & \le \frac{C_{\star}^2 q^{2(k_{\star}-j-1)} \| A_u^{k_{\star}-1}v_u\|}
      {\left( 1 - C_{\star}^4 q^{4(k_{\star}-j-1)}\right)^{1/2}
        \|A_s^{k_{\star}-1}v_s \|} \le 2 C_{\star}^2 q^{2(k_{\star}-j-1)}.
    \end{aligned}
  \end{equation*}
  With these preparations the triangle inequality leads to
  (recall $\sum_m^n = 0$ if $m>n$)
  \begin{align*}
    \frac{1}{n} \sum_{j=1}^n \ang(A^{j-1}v,A^{j}v) & \le \frac{1}{n}
    \Big[ \big(\sum_{j=1}^{k_{\star}-1-j_{\star}} + \sum_{j=k_{\star}+1+j_{\star}}^n\big)
      \ang(A^{j-1}v,A^{j}v) + (2  j_{\star}+1) \frac{\pi}{2} \Big]\\
    & \le \frac{1}{n} \Big[ 2C_{\star}^2\big( \sum_{j=1}^{k_{\star}-1-j_{\star}}
      q^{2(k_{\star}-j-1)} + \sum_{j=k_{\star}+1+j_{\star}}^nq^{2(j-k_{\star})}\big)
      + (j_{\star}+1)\pi \\
      & + \sum_{j=1}^{\min(k_{\star},n)} \ang(A_s^{j-1}v_s,A_s^{j}v_s)
      + \sum_{j=k_{\star}+1}^n \ang(A_u^{j-1}v_u,A_u^{j}v_u)\Big].
  \end{align*}
  For any given $\varepsilon >0$, there exists $n_0 \in \N$ such that for
  all $k\ge n_0$, $v_s \in  X_s$, $v_s \neq 0$, $v_u \in X_u$, $v_u \neq 0$
  the following holds
  \begin{align*}
    \sum_{j=1}^k\ang(A_s^{j-1}v_s,A_s^{j}v_s) \le
    k(\bar{\theta}_1(A_s)+ \varepsilon), \quad
    \sum_{j=1}^k\ang(A_u^{j-1}v_u,A_u^{j}v_u) \le k(\bar{\theta}_1(A_u)+ \varepsilon).
  \end{align*}
  Thus we have for $n \ge n_0$
  \begin{align*}
    \sum_{j=1}^{\min(k_{\star},n)} \ang(A_s^{j-1}v_s,A_s^{j}v_s) \le
    \begin{cases} \min(k_{\star},n) (\bar{\theta}_1(A_s)+ \varepsilon), &
      k_{\star} \ge n_0, \\
      n_0 \frac{\pi}{2}, & k_{\star} \le n_0.
    \end{cases}
  \end{align*}
      With a similar estimate for $\sum_{j=k_{\star}+1}^n\ang(A_u^{j-1}v_u,A_u^{j}v_u)$
    we obtain for $n \ge n_0$ and  some constant $C$ independent of $v$ and $n$
   \begin{align*}
     \frac{1}{n} \sum_{j=1}^n \ang(A^{j-1}v,A^{j}v) &\le \frac{1}{n} \big[
       C +  n_0 \frac{\pi}{2} + \min(k_{\star},n)(\bar{\theta}_1(A_s)+ \varepsilon) \\
      &+  n_0 \frac{\pi}{2} + (n - \min(k_{\star},n))(\bar{\theta}_1(A_u)+ \varepsilon)  \big] \\
     & \le \max(\bar{\theta}_1(A_s),\bar{\theta}_1(A_u)) + \varepsilon
     + \frac{1}{n}(C+ n_0 \pi).
   \end{align*}
   Now take the supremum over $v\in \R^d, v_u \neq 0$ and then
   $n$ large so that the last summand is less than $\varepsilon$.
   This finishes the proof of \eqref{eqreduce}.
\end{proof}

\end{document}